\newtheorem{Theorem}{Theorem}[section]
\newtheorem{Lemma}[Theorem]{Lemma}
\newtheorem{Proposition}[Theorem]{Proposition}
\newtheorem{Corollary}[Theorem]{Corollary}
\newtheorem{Remark}[Theorem]{Remark}
\def\colim{\operatorname{colim}}
\def\ot{\otimes}
\def\ep{\varepsilon}
\def\cok{\operatorname{cok}}
\def\coeq{\operatorname{coeq}}
\def\eq{\operatorname{eq}}
\def\Cor{\operatorname{Cor}}
\def\Rad{\operatorname{Rad}}
\def\gr{\operatorname{gr}}
\def\Hom{\operatorname{Hom}}
\def\diag{\operatorname{diag}}
\def\Aut{\operatorname{Aut}}
\def\Coalg{\operatorname{Coalg}}
\def\Alg{\operatorname{Alg}}
\def\Vect{\operatorname{Vect}}
\def\Def{\operatorname{Def}}
\def\res{\operatorname{res}}
\def\obs{\operatorname{obs}}
\def\Lift{\operatorname{Lift}}
\def\co-Lift{\operatorname{co-Lift}}
\def\and{\operatorname{and}}
\def\im{\operatorname{im}}
\def\res{\operatorname{res}}
\def\m{\operatorname{m}}
\def\H{H}
\def\YD{\operatorname{YD}}
\def\cal{\mathcal}
\def\incl{\operatorname{incl}}
\def\proj{\operatorname{proj}}
\def\ad{\operatorname{ad}}
\def\adj{\operatorname{adj}}
\def\coadj{\operatorname{coadj}}
\def\ad{\operatorname{ad}}
\def\coeq{\operatorname{coeq}}
\def\del{\partial}
\def\Ext{\operatorname{Ext}}
\def\Tor{\operatorname{Tor}}
\def\Der{\operatorname{Der}}
\def\im{\operatorname{im}}
\def\genst#1#2{\left\langle\left. #1 \right| #2 \right\rangle}
\def\setst#1#2{\left\{\left. #1 \right| #2 \right\}}
\def\gen#1{\left\langle #1 \right\rangle}
\def\set#1{\left\{ #1 \right\}}
\title{Pointed and copointed Hopf algebras as cocycle deformations}
\author{L. Grunenfelder and M. Mastnak}
\address{Department of Mathematics, The University of British Columbia, Vancouver, BC, V6T 1Z2, Canada}
\email{luzius@math.ubc.ca, luzius@mathstat.dal.ca}
\address{Department of Pure Mathematics, University of Waterloo, Waterloo, On, , Canada}
\email{mastnak@math.uwaterloo.ca}
\thanks{Research supported in part by NSERC}
\date{July, 2007}
\begin{document}
\tolerance=1000 \overfullrule 5pt

\begin{abstract} We show that all finite dimensional pointed Hopf
algebras with the same diagram in the classification scheme of
Andruskiewitsch and Schneider are cocycle deformations of each
other. This is done by giving first a suitable characterization of
such Hopf algebras, which allows for the application of results by
Masuoka about Morita-Takeuchi equivalence and by Schauenburg about
Hopf Galois extensions. The ``infinitesimal" part of the deforming
cocycle and of the deformation determine the deformed
multiplication and can be described explicitly in terms of
Hochschild cohomology. Applications to, and results for copointed
Hopf algebras are also considered.
\end{abstract}

\maketitle
\setcounter{section}{-1}

\section{Introduction}\label{s0}

Finite dimensional pointed Hopf algebras over an algebraically closed
field of characteristic zero, particularly when the group of
points is abelian, have been studied quite extensively with
various methods in \cite{AS, BDG, Gr1, Mu}. The most far
reaching results as yet in this area have been obtained in
\cite{AS}, where a large class of such Hopf algebras are
classified. In the present paper we will show, among other things,
that all Hopf algebras in this class can be
obtained by cocycle deformations. We also consider the ``dual"
case, where the Jacobson radical is a Hopf ideal. In the
non-pointed case, in particular when the coradical is not a Hopf
algebra, very little is known. Few examples occur
of the literature \cite{Ra2, Be}, but no general description
or classification results are available. It is the aim of this
paper to contribute to the construction and classification of such
Hopf algebras, in particular the copointed kind. By a copointed Hopf
algebra we mean a Hopf algebra
$H$ whose Jacobson radical $\Rad H$ is a Hopf ideal and $H/{\Rad
H}$ is a group algebra.

If $H$ is a Hopf algebra with coradical a Hopf subalgebra then the
graded coalgebra $\gr_cH$ associated with the coradical filtration
is a graded Hopf algebra and its elements of positive degree form
the radical. If the radical of $H$ is a Hopf ideal then the graded
algebra associated with the radical filtration is a graded Hopf
algebra with $\Cor (\gr_a H)\cong H/\Rad H$. In either case we
have $\gr H\cong R\# H_0$, where $H_0$ is the degree zero part and
$R$ is the braided Hopf algebra of coinvariants or invariants,
respectively.

The Nichols algebra $B(V)$ of a crossed $kG$-module $V$ is a
connected graded braided Hopf algebra. $H(V)=B(V)\# kG$ is an
ordinary graded Hopf algebra with coradical $kG$ and the elements
of positive degree form a Hopf ideal (the graded radical). A lifting of
$H(V)$ is a pointed Hopf algebra $H$ for which $\gr_cH\cong H(V)$. Such liftings
are obtained by deforming the multiplication of $H(V)$.
The lifting problem for $V$ asks for the classification of all liftings
of $H(V)$. This problem, together with the characterization of $B(V)$ and $H(V)$,
have been solved  by Andruskiewitsch and Schneider in \cite{AS} for a large class of crossed $kG$-modules of finite Cartan type.
It allows them to classify all finite dimensional pointed Hopf algebras
$A$ for which the order of the abelian group of points has no
prime factors $< 11$. In this paper we find a description of these
lifted Hopf algebras, which is suitable for the application of a
result of Masuoka about Morita-Takeuchi equivalence \cite{Ma} and
of Schauenburg about Hopf Galois extensions \cite{Sch}, to prove
that all liftings of a given $H(V)$ in this class are cocycle
deformations of each other. As a result we see here that in the
class of finite dimensional pointed Hopf algebras classified by
Andruskiewitsch an Schneider \cite{AS} all Hopf algebras $H$ with
isomorphic associated graded Hopf algebra $\gr_cH$ are monoidally
Morita-Takeuchi equivalent, and therefore cocycle deformations of
each other. For some special cases such results have been obtained
in \cite{Ma, Di}. Since $B(V)$ and $H(V)$ are graded the cocycles and
deformations can be viewed in a formal setting. The infinitesimal
parts are Hochschild cocycles. They determine the deformed
multiplication and can be computed explicitly. The dual problem is
to construct co-pointed Hopf algebras $H$ by deforming the
comultiplication of $H(V)$ in such a way that $\gr_aH\cong H(V)$,
where $\gr_aH$ is the graded Hopf algebra associated with the
radical filtration. Such `liftings' can be viewed as cocycle
deformations with convolution invertible coalgebra cocycles, which
again can be discussed in a formal setting. In both cases the
deformations are formal in the sense of \cite{GS}, the
infinitesimal parts of these deformations determine the deformed
multiplication and comultiplication, respectively, and are
determined by the $G$-invariant part of the Hochschild cohomology
of $B(V)$ or $H(V)$.

First we discuss what von Neumann regularity for an algebra and
the ``dual" concept of coregularity for a coalgebra  entail in the
case of a Hopf algebra. Of particular interest is the situation
where the coradical is a regular Hopf algebra and/or $H/{\Rad H}$
is a coregular Hopf algebra. If both conditions are satisfied then
$H\cong A\#\Cor (H)$, where $A=H^{co(\Cor H)}$ is the braided Hopf
algebra of coinvariants over $\Cor (H)$. This happens in
particular when $\Cor (H)$ is a finite dimensional Hopf subalgebra
of $H$ and $\Rad (H)$ is a Hopf ideal of finite codimension in
$H$. It also happens for $\gr_cH$ when $H_0=\Cor (H)$ is a regular
Hopf algebra, and for $\gr_aH$ when $H_0=H/\Rad (H)$ is a
coregular Hopf algebra. A group Hopf algebra $kG$ is always
coregular, but it is regular if and only if every finitely
generated subgroup of $G$ is finite. In Section 2 we continue with
a short review of braided spaces, braided Hopf algebras, Nichols
algebras and bosonization. in preparation for a useful
characterization of the liftings for a large class of crossed
modules over finite abelian groups in Section 3. With this
characterization it is then possible to prove that liftings are
Morita-Takeuchi equivalent (Section 3) by using Masuoka's pushout
construction, and that they are cocycle deformations of each other
(Section 4) by a result of Schauenburg.  Cocycle deformations of
multiplication and comultiplication, as well as their relation to
Hochschild cohomology are discussed in Section 4. Some explicit
examples are presented in Section 5, and duality of the two
deformation procedures are explored in the final section.

It came to our attention that, in the preprint \cite{Ma2} just
posted in the archive, some special cases of the connection
between \lq liftings' and cocycle deformations are considered.
After we had posted our paper A. Masuoka informed us that his
Theorem 2 of \cite{Ma} was missing a condition, as observed in
\cite{BDR}. For the verification of this additional condition,
needed in our \ref{main3}, we refer to the second version of
\cite{Ma2}, where it now appears as an appendix.

\section{Regularity and coregularity}\label{s1}

An algebra $A$ is (Von Neumann) regular if $a=axa$ has a solution
for every $a\in A$. This is equivalent to saying that every left
(right) $A$-module is flat, or also that every finitely generated
left (right) ideal of $A$ is generated by an idempotent [C, St],
\cite{We}. We say that a coalgebra $C$ is coregular if every left
(right) $C$-comodule is coflat.

\begin{Lemma}\label{coproducts} If $C=\oplus_{\nu}C_{\nu}$ is a
coalgebra and $X$ a (right) $C$-comodule, then $X=\oplus X_{\nu}$,
where $X_{\nu}=i_{\nu}^*X=X\ot^CC_{\nu}$. Moreover, $X$ is
$C$-coflat if and only if $X_{\nu}$ is $C_{\nu}$-coflat for every
$\nu$.
\end{Lemma}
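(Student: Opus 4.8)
The plan is, for each $\nu$, to split off from every $C$-comodule its ``$C_\nu$-part'' by a natural idempotent. The hypothesis says each $C_\nu$ is a subcoalgebra of $C$, i.e.\ $\De(C_\nu)\subseteq C_\nu\ot C_\nu$, with counit $\ep|_{C_\nu}$; write $\pi_\nu\colon C\to C_\nu$ for the projections and $e_\nu=\ep|_{C_\nu}\circ\pi_\nu\in C^*$, so that $\sum_\nu e_\nu=\ep$ and $(\mathrm{id}_C\ot e_\nu)\De=\pi_\nu$. For a right $C$-comodule $(X,\rho)$ I would set $P_\nu=(\mathrm{id}_X\ot e_\nu)\rho\colon X\to X$; comodule coassociativity gives $\rho\circ P_\nu=(\mathrm{id}_X\ot\pi_\nu)\circ\rho$, from which the $P_\nu$ are pairwise orthogonal idempotents with $\sum_\nu P_\nu=(\mathrm{id}_X\ot\ep)\rho=\mathrm{id}_X$ (a pointwise finite sum, as $\rho(x)$ involves only finitely many of the $C_\nu$). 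Hence $X=\oplus_\nu X_\nu$ with $X_\nu=P_\nu(X)=\{x\in X\mid\rho(x)\in X\ot C_\nu\}$, and $\rho$ corestricts to a $C_\nu$-coaction $X_\nu\to X_\nu\ot C_\nu$. Identifying $X_\nu$ with $X\ot^C C_\nu$ is then the standard isomorphism $M\ot^C C\cong M$ carried out one block at a time: $\rho|_{X_\nu}\colon X_\nu\to X_\nu\ot C_\nu\subseteq X\ot C_\nu$ lands in the equalizer defining $X\ot^C C_\nu$ (again by coassociativity, $C_\nu$ carrying the left $C$-coaction $\De|_{C_\nu}$), with inverse $\mathrm{id}_X\ot\ep$. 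This proves the first assertion.

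For coflatness, note that the same construction decomposes every \emph{left} $C$-comodule $N$ as $N=\oplus_\nu N_\nu$ with $N_\nu$ a left $C_\nu$-comodule, and that any such family $(N_\nu)_\nu$ arises this way; thus the category of left $C$-comodules is the product over $\nu$ of the categories of left $C_\nu$-comodules. The key point is the identity
\[
X\ot^C N=\bigoplus_\nu\bigl(X_\nu\ot^{C_\nu}N_\nu\bigr)\qquad\text{for }X=\textstyle\bigoplus_\mu X_\mu,\ N=\bigoplus_\nu N_\nu .
\]
To see it, decompose $X\ot C\ot N=\bigoplus_{\mu,\lambda,\nu}X_\mu\ot C_\lambda\ot N_\nu$; the two maps $\rho\ot\mathrm{id}$ and $\mathrm{id}\ot\lambda_N$ whose equalizer is $X\ot^C N$ send $X_\mu\ot N_\nu$ into the summands labelled $(\mu,\mu,\nu)$ and $(\mu,\nu,\nu)$, which agree only when $\mu=\nu$; so for $\mu\ne\nu$ the equalizer condition forces $\rho\ot\mathrm{id}$ to vanish on $X_\mu\ot N_\nu$, and since $\rho$ is injective (split by $\mathrm{id}_X\ot\ep$) the off-diagonal summands contribute nothing, while on the diagonal the condition is precisely the one defining $X_\nu\ot^{C_\nu}N_\nu$.

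Finally, a short exact sequence of left $C$-comodules is the same thing as a family of short exact sequences of $C_\nu$-comodules, and $X\ot^C(-)$ turns it into the direct sum over $\nu$ of $X_\nu\ot^{C_\nu}(-)$ applied to the $\nu$-th one. Since direct sums are exact in $\Vect$, $X\ot^C(-)$ is exact iff every $X_\nu\ot^{C_\nu}(-)$ is; the ``only if'' follows by feeding in sequences supported in a single index. Hence $X$ is $C$-coflat iff every $X_\nu$ is $C_\nu$-coflat. I do not expect any real obstacle here; the only points needing a little care are the bookkeeping for possibly infinite index sets and the distributivity of $\ot^C$ over the decomposition, both harmless because $\ot$ and kernels commute with arbitrary direct sums in $\Vect$. (Alternatively one could invoke that over a field a comodule is coflat iff it is injective, together with the fact that injectives in a product of abelian categories are the products of injectives.)
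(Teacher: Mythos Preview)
The paper states this lemma without proof, treating it as a standard fact about comodules over a direct-sum coalgebra. Your argument is correct and is precisely the expected one: the idempotents $P_\nu=(\mathrm{id}_X\ot e_\nu)\rho$ give the comodule decomposition, the identification $X_\nu\cong X\ot^C C_\nu$ is the usual cotensor computation, and the block-diagonal formula $X\ot^C N\cong\bigoplus_\nu X_\nu\ot^{C_\nu}N_\nu$ reduces coflatness of $X$ to that of each $X_\nu$. The bookkeeping for infinite index sets is handled correctly (pointwise finiteness of $\sum_\nu P_\nu$, and the fact that kernels and tensor products commute with direct sums in $\Vect$). There is nothing to compare against here; your write-up would serve as a proof the paper omits.
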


\begin{Proposition} The following properties of a coalgebra
$C$ are equivalent:
\begin{itemize}
\item[{(a)}] $C$ is coregular.
\item[{(b)}] Every subcoalgebra of $C$ is coregular.
\item[{(c)}] Every finite dimensional subcoalgebra of $C$ is cosemisimple.
\item[{(d)}] $\Cor (C)=C$.
\item[{(e)}] $C^*$ is a regular algebra.
\end{itemize}
\end{Proposition}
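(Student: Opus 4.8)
The plan is to establish the cycle $(a)\Rightarrow(b)\Rightarrow(c)\Rightarrow(d)\Rightarrow(a)$ and then to treat $(d)\Leftrightarrow(e)$ separately, using throughout two standard facts: the fundamental theorem of coalgebras, by which $C$ is the directed union of its finite dimensional subcoalgebras $D_\nu$, and the contravariant bijection between subcoalgebras $D\subseteq C$ and quotient algebras $C^*\twoheadrightarrow D^*$.

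Three of the implications are soft. For $(a)\Rightarrow(b)$, let $D\subseteq C$ be a subcoalgebra; for a right $D$-comodule $X$ and a left $D$-comodule $Y$, corestriction along $D\hookrightarrow C$ makes both into $C$-comodules, and since tensoring over a field is exact the inclusion $X\ot D\ot Y\hookrightarrow X\ot C\ot Y$ identifies the two equalizers, so $X\ot^D Y=X\ot^C Y$ naturally; as a short exact sequence of $D$-comodules is also one of $C$-comodules, coflatness of $X$ over $C$ gives coflatness over $D$, whence $D$ is coregular. For $(c)\Rightarrow(d)$, if a finite dimensional subcoalgebra $D=D_\nu$ is cosemisimple, i.e.\ $D=\Cor D$, then $D=\Cor D\subseteq\Cor C$ because a simple subcoalgebra of $D$ is a simple subcoalgebra of $C$; since the $D_\nu$ cover $C$ we get $\Cor C=C$. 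For $(d)\Rightarrow(a)$, write the cosemisimple coalgebra $C$ as a direct sum $\bigoplus_\nu C_\nu$ of simple subcoalgebras, necessarily finite dimensional; by Lemma~\ref{coproducts} a $C$-comodule is coflat iff each of its $C_\nu$-components is coflat over $C_\nu$, and since $C_\nu^*$ is a simple Artinian algebra the category of $C_\nu$-comodules is semisimple, so all its objects are coflat; hence $C$ is coregular.

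The equivalence $(d)\Leftrightarrow(e)$ goes through the dual algebra. With $C=\bigoplus_\nu C_\nu$ as above, $C^*=\prod_\nu C_\nu^*$ is a product of simple Artinian algebras, hence von Neumann regular (one solves $a=axa$ componentwise), giving $(d)\Rightarrow(e)$. Conversely, suppose $C^*$ is regular but $\Cor C\ne C$; by the fundamental theorem choose a finite dimensional subcoalgebra $D$ with $D\not\subseteq\Cor C$, so $D\ne\Cor D$ and $\Rad D^*\ne 0$. But $D^*$ is a quotient of the regular algebra $C^*$, hence regular, and a regular algebra has zero Jacobson radical (given $a$ in the radical, write $a=axa$; then $a(1-xa)=0$ and $1-xa$ is a unit, so $a=0$), a contradiction.

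The real work is $(b)\Rightarrow(c)$, which by $(b)$ reduces to showing that a \emph{finite dimensional} coregular coalgebra $D$ is cosemisimple. The approach I would take is to pass to $D^*$: for finite dimensional $D$ the categories of $D$-comodules and $D^*$-modules coincide, and the cotensor product over $D$ corresponds, with the correct sidedness, to the balanced tensor product over $D^*$; thus ``$D$ coregular'' becomes ``$D^*$ von Neumann regular'', and since $D^*$ is finite dimensional, hence Artinian, regularity forces $\Rad D^*=0$, so $D^*$ is semisimple and $D$ is cosemisimple. (Alternatively one argues entirely on the comodule side: a finite dimensional coalgebra is co-Noetherian, so coflat comodules are injective; then every simple $D$-comodule is its own injective hull, socles split off, every $D$-comodule is semisimple, and $D$ is cosemisimple.) I expect the main obstacle to lie exactly here --- getting the left/right conventions straight in the comodule--module and cotensor--tensor dictionaries, and pinning down the precise finiteness hypothesis under which coflatness implies injectivity --- rather than in any of the remaining implications, which are formal.
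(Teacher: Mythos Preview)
Your proposal is correct and follows essentially the same route as the paper's proof: the same cycle $(a)\Rightarrow(b)\Rightarrow(c)\Rightarrow(d)\Rightarrow(a)$ via the identification $X\ot^D M\cong X\ot^C M$, the finite-dimensional cotensor/tensor dictionary, the fundamental theorem of coalgebras, and Lemma~\ref{coproducts}, together with the same treatment of $(d)\Leftrightarrow(e)$ through $C^*$ as a product of simple Artinian factors and the contrapositive via a non-cosemisimple finite dimensional quotient. The step you single out as the real work, $(b)\Rightarrow(c)$, is handled in the paper exactly by your primary line---pass to $D^*$ and use that a finite dimensional regular algebra is semisimple---so your concerns about the left/right and cotensor/tensor conventions are well placed but need no more than the brief parenthetical the paper supplies.
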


\begin{proof} Let $C$ be coregular and let $D$ be a subcoalgebra of $C$.
Every (right) $D$-comodule is also a (right) $C$-comodule, hence
coflat as a $C$-comodule. Now, if $X$ is a right $D$-comodule and
$f\colon M\to N$ is a surjective $D$-comodule map, then in the
commutative diagram
$$\begin{CD}
X\ot^DM @> >> X\ot^CM \\
@VVV @VVV \\
X\ot^DN @> >> X\ot^CN
\end{CD}$$
the horizontal maps are bijective by the definition of the
cotensor product, while the right-hand vertical map is surjective
by the $C$ coflatness of $X$, so that the left-hand vertical map
is also surjective and $X$ is therefore coflat as a $D$-module.
Thus, every (right) $D$ comodule is coflat and so $D$ is
coregular.

If $D$ is a finite dimensional subcoalgebra of $C$, which is
coregular, then $D^*$ is a finite dimensional regular algebra,
hence semisimple, so that $D$ is cosemisimple. (A module is flat
whenever every finitely generated submodule is flat \cite{St}. An
$A$-module $Y$ is flat if and only if $Y\ot_RI\to Y$ is injective
for every finitely generated left ideal $I$ of $A$ [C, St]. Thus,
it suffices to consider finitely generated modules, in which case
$(X\ot^DM)^*\cong X^*\ot_{D^*}M^*$.)

If every finite dimensional subcoalgebra of $C$ is cosemisimple
then $\Cor (C)=C$, since every element of $C$ is contained in a
finite dimensional subcoalgebra.

That (d) implies (a) follows from Lemma \ref{coproducts}. It
remains to show that (d) is equivalent to (e). If $C=\Cor (C)$
then $C^*$ is a product of finite dimensional simple, and hence
regular algebras. But a product of algebras is regular if and only
if each factor is regular, so that $C^*$ is regular. Conversely,
if $C$ is not coregular, then it contains a finite dimensional
subcoalgebra $D$ which is not cosemisimple, and $D^*$ is a finite
dimensional non-regular quotient algebra of $C^*$, so that $C^*$
is not regular.
\end{proof}

\begin{Proposition} If $A$ is a regular algebra then:
\begin{itemize}
\item[{(a)}] $\Rad (A)=0$.
\item[{(b)}] Every quotient algebra of $A$ is regular.
\item[{(c)}] Every finite dimensional quotient algebra of $A$ is semisimple.
\item[{(d)}] $A^{\circ}$ is a coregular coalgebra, i.e. $\Cor (A^{\circ})=A^{\circ}$.
\end{itemize}
\end{Proposition}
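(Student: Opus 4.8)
The plan is to exploit the general fact that $\Rad(A) = 0$ for any regular algebra, and then to reduce the remaining statements to this one by passing to quotients and to the finite dual.

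First, for (a): if $a \in \Rad(A)$, then since $A$ is regular there is $x$ with $a = axa$. Set $e = xa$. Then $e \in \Rad(A)$ (as $\Rad(A)$ is an ideal) and $e = xa = x(axa) = (xa)(xa) = e^2$ is idempotent. But an idempotent lying in the Jacobson radical must be zero, since $1 - e$ is invertible and $(1-e)e = 0$ forces $e = 0$; hence $a = axa = ae = 0$. So $\Rad(A) = 0$.

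Next, for (b): regularity is manifestly preserved under quotients, since if $\bar a \in A/I$ and $a \in A$ is a lift, then $a = axa$ gives $\bar a = \bar a \bar x \bar a$ in $A/I$. Then (c) is immediate: a finite dimensional regular algebra has zero Jacobson radical by (a), hence is semisimple; and every finite dimensional quotient of $A$ is regular by (b).

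Finally, for (d): the finite dual $A^\circ$ is the union (directed colimit) of the duals $(A/I)^*$ over cofinite ideals $I$ of $A$, equivalently every finite dimensional subcoalgebra $D$ of $A^\circ$ is of the form $(A/I)^*$ for some finite codimension ideal $I$ (with $I = D^\perp$). By (c), $A/I$ is semisimple, so $D = (A/I)^*$ is cosemisimple. Thus every finite dimensional subcoalgebra of $A^\circ$ is cosemisimple, which by the previous Proposition (criterion (c) $\Leftrightarrow$ (d)) means $\Cor(A^\circ) = A^\circ$, i.e.\ $A^\circ$ is coregular. The only point needing a little care here is the identification of finite dimensional subcoalgebras of $A^\circ$ with duals of finite dimensional quotient algebras of $A$; this is standard for the finite dual (a subcoalgebra $D$ corresponds to the ideal $D^\perp$, which has finite codimension exactly because $D$ is finite dimensional and $A/D^\perp \hookrightarrow D^*$), so no real obstacle arises. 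The main (and only mildly non-trivial) step is part (a); everything else is a formal consequence of it together with the preceding Proposition.
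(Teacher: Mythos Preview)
Your proof is correct and follows essentially the same route as the paper's. The only cosmetic difference is in part (a): the paper writes $a(1-xa)=0$ directly and uses invertibility of $1-xa$, whereas you first name the idempotent $e=xa$ and argue that an idempotent in $\Rad(A)$ is zero---but unwinding your argument gives exactly the same computation. Parts (b)--(d) match the paper almost verbatim, and your extra remark that a finite dimensional subcoalgebra $D\subseteq A^{\circ}$ equals $(A/D^{\perp})^{*}$ is a correct (and slightly more careful) justification of the step the paper handles by writing $A^{\circ}=\colim (A/I)^{*}$.
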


\begin{proof} If $a\in\Rad (A)$ then $a=axa$ for some $x\in A$ implies that
$a(1-xa)=0$ and hence $a=0$, since $1-xa$ is invertible.

If $a=axa$ in $A$ then $\bar a\bar x\bar a=\bar a$ in $A/I$ for
any ideal $I$ of $A$. If $A/I$ is finite dimensional and regular then it is
semisimple, since $\Rad (A)=0$.

$A^{\circ}=\colim (A/I)^*$, where the colimit is over all cofinite
ideals of $A$. If $A$ is regular and $I$ is a cofinite ideal then
$A/I$ is semisimple and $(A/I)^*$ is cosemisimple, so that $\Cor
(A^{\circ})=A^{\circ}$. Assertion (d) also appears as Proposition
3.2 in \cite{Cu}.
\end{proof}

\begin{Lemma}\label{l14} If $A$ is a Von Neumann regular subring of the ring $B$ then $A\cap\Rad B=0$.
\end{Lemma}

\begin{proof} If $a\in A\cap\Rad B$ then $a=axa$ has a solution in $A$, say
$x=a'$, since $A$ is Von Neumann regular, and $1-a'a$ is invertible, since
$a\in\Rad (B)$. But then $a(1-a'a)=0$ implies that $a=0$.
\end{proof}

The following example shows that the conclusion of this Lemma does
not hold in general when $A$ is not regular. If the polynomial
algebra $B=k[x]$ is considered in the usual way as a subalgebra of
the power series algebra $A=k[[x]]$ then $\Rad (B)=Bx$ and
$A\cap\Rad (B)=Ax$, but $\Rad (A)=0$.

\begin{Proposition} Let $H$ be a Hopf algebra.
\begin{itemize}
\item[{\rm(a)}] If $\Cor (H)$ is a Von Neumann regular Hopf
subalgebra of $H$, in particular if $\Cor (H)$ is a finite
dimensional Hopf subalgebra of $H$, then the algebra map
$\Cor(H)\to H\to H/\Rad (H)$ is injective.
\item[{\rm(b)}] If $\Rad (H)$ is a Hopf ideal and $H/\Rad (H)$ is coregular,
in particular if $\Rad (H)$ is a Hopf ideal of finite codimension in $H$,
then the coalgebra map $\Cor (H)\to H\to H/\Rad (H)$ is surjective.
\item[{\rm(c)}] If $\Cor (H)$ is a
von Neumann regular Hopf subalgebra of $H$ and $\Rad (H)$ a Hopf
ideal with $H/\Rad (H)$ coregular then $\Cor (H)\to H\to H/\Rad
(H)$ is a Hopf algebra isomorphism and $H\cong A\rtimes\Cor (H)$,
where $A=H^{co(\Cor H)}$ is the braided Hopf algebra of
coinvariants over $\Cor (H)$. This happens in particular when
$\Cor (H)$ is a finite dimensional Hopf subalgebra of $H$ and
$\Rad (H)$ is a Hopf ideal of finite codimension in $H$.
\end{itemize}
\end{Proposition}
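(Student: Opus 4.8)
The plan is to prove the injectivity statement (a) and the surjectivity statement (b) for the canonical map $\psi\colon\Cor(H)\to H/\Rad(H)$ built from $\Cor(H)\hookrightarrow H$ and the projection $\pi\colon H\to H/\Rad(H)$, and then to deduce (c) via Radford's theorem on Hopf algebras with a projection. Part (a) is essentially a restatement of Lemma \ref{l14}: when $\Cor(H)$ is a Hopf subalgebra, both maps whose composite is $\psi$ are algebra homomorphisms, so $\psi$ is one, with kernel $\Cor(H)\cap\Rad(H)$; since $\Cor(H)$ is a Von Neumann regular subring of $H$, Lemma \ref{l14} forces $\Cor(H)\cap\Rad(H)=0$. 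The ``in particular'' clause records that the coradical is always cosemisimple and that, over a field of characteristic zero, a finite-dimensional cosemisimple Hopf algebra is semisimple (Larson--Radford), hence Artinian semisimple, hence Von Neumann regular.

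For (b), the hypothesis that $\Rad(H)$ is a Hopf ideal makes $\bar H:=H/\Rad(H)$ a Hopf algebra and $\psi$ a coalgebra map, and by the characterization of coregularity proved above, $\Cor(\bar H)=\bar H$; thus $\bar H=\bigoplus_i D_i$ with each $D_i$ a simple, hence finite-dimensional, subcoalgebra, and it suffices to show $D_i\subseteq\pi(\Cor(H))$ for each $i$. I would lift a basis of $D_i$ to elements of $\pi^{-1}(D_i)$ and let $F\subseteq H$ be the subcoalgebra they generate; by the fundamental theorem of coalgebras $F$ is finite-dimensional, with $F\subseteq\pi^{-1}(D_i)$ and $\pi(F)=D_i$. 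Dualizing the surjection $\pi|_F\colon F\to D_i$ of finite-dimensional coalgebras gives a unital algebra embedding $D_i^*\hookrightarrow F^*$; as $D_i^*$ is simple Artinian the two-sided ideal $D_i^*\cap\Rad(F^*)$ omits $1_{F^*}$ and is therefore $0$, so $D_i^*$ embeds into $F^*/\Rad(F^*)\cong(\Cor F)^*$, and transposing back, the restriction of $\pi$ to $\Cor F$ is onto $D_i$. Since $\Cor F\subseteq\Cor(H)$ (a simple subcoalgebra of $F$ is one of $H$), this yields $D_i\subseteq\pi(\Cor(H))$, as needed. The ``in particular'' clause holds because finite codimension of $\Rad(H)$ makes $\bar H$ a finite-dimensional Hopf algebra with zero Jacobson radical, hence semisimple, hence (characteristic zero, Larson--Radford) cosemisimple, i.e.\ coregular.

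For (c): under both hypotheses $\psi$ is at once an algebra map (by (a)) and a coalgebra map (by (b)), hence a bialgebra map, and it is bijective by (a) and (b), so it is a Hopf algebra isomorphism. With $K:=\Cor(H)$, the map $\rho:=\psi^{-1}\circ\pi\colon H\to K$ is then a Hopf algebra retraction of the inclusion $K\hookrightarrow H$ (its restriction to $K$ being $\psi^{-1}\psi$, the identity), so $H$ is a Hopf algebra with a projection onto $K$, and Radford's theorem on Hopf algebras with a projection presents it as $H\cong A\rtimes K$ with $A=H^{co(\Cor H)}$ a braided Hopf algebra in the category of Yetter--Drinfeld (crossed) modules over $K$; the last clause of (c) is just the case in which the finiteness conditions of (a) and (b) both hold. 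The only genuinely non-formal step is the passage to finite-dimensional subcoalgebras in (b): because $H$ may be infinite-dimensional, one needs the fundamental theorem of coalgebras to produce the finite-dimensional $F$ on which the simple-Artinian argument and the finite-dimensional duality $\Cor\leftrightarrow\Rad$ apply. (A ``dual'' of Lemma \ref{l14} applied to $\bar H^*\subseteq H^*$ gives $\bar H^*\cap\Rad(H^*)=0$, but unwinding it back to the surjectivity of $\psi$ still requires the same reduction.)
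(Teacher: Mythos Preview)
Your proof is correct. Parts (a) and (c) follow the paper's argument essentially verbatim: (a) is the kernel computation $\Cor(H)\cap\Rad(H)=0$ via Von Neumann regularity (the paper's Lemma~\ref{l14}), with the Larson--Radford input for the finite-dimensional clause, and (c) combines (a), (b) and Radford's projection theorem \cite{Ra1}.

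Part (b) is where you genuinely diverge. The paper dispatches it in one line by citing \cite[Corollary 5.3.5]{Mo}: a surjective coalgebra map $C\to D$ with $D=\Cor(D)$ carries $\Cor C$ onto $\Cor D$. You instead reprove this fact by hand: pick a simple summand $D_i\subseteq\bar H$, use the fundamental theorem of coalgebras to trap a lift of a basis of $D_i$ inside a finite-dimensional subcoalgebra $F\subseteq\pi^{-1}(D_i)$, then dualize to the simple-Artinian algebra $D_i^*\hookrightarrow F^*$, intersect with $\Rad(F^*)$, and transpose back via $F^*/\Rad(F^*)\cong(\Cor F)^*$. Every step checks out (in particular $D_i^*\cap\Rad(F^*)$ really is a two-sided ideal of $D_i^*$ not containing the unit, and the dual of the inclusion $\Cor F\hookrightarrow F$ is exactly the quotient by $\Rad(F^*)$). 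What your route buys is self-containment and an explicit display of the finite-dimensional $\Cor/\Rad$ duality at work; what the paper's citation buys is brevity. The content is the same result, just proved rather than quoted.
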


\begin{proof} a) If $\Cor (H)$ is a Von Neumann regular Hopf
subalgebra of $H$ then for any $a\in \Cor (H)\cap\Rad
(H)\subset\Cor (H)$ the equation $a=axa$ has a solution in $\Cor
(H)$ so that $a(1-xa)=0$. Since $1-xa$ is invertible in $H$ it
follows that $a=0$, and hence $\Cor (H)\cap\Rad (H)=0$. In
particular, if $\Cor (H)$ is a finite dimensional Hopf subalgebra
of $H$ then it is cosemisimple, hence semisimple by \cite{LR}, and
thus Von Neumann regular, so that $\Cor (H)\cap\Rad
(H)\subseteq\Rad (\Cor (H))=0$ by Lemma \ref{l14}

b) If $\Rad (H)$ is a Hopf ideal in $H$ and $H/\Rad (H)$ is a coregular Hopf
algebra, then $\Cor (H/\Rad (H))=H/\Rad (H)$. Moreover, since any
surjective coalgebra map $\eta\colon C\to D$, where $D=\Cor (D)$,
maps $\Cor C$ onto $\Cor D$ \cite[Corollary 5.3.5]{Mo} it follows
that the canonical map $\Cor (H)\to H\to H/\Rad (H)$ is
surjective. In particular when $\Rad (H)$ is a Hopf ideal of
finite codimension in $H$ then $H/\Rad (H)$ is a semisimple Hopf
algebra, hence cosemisimple by \cite{LR}.

c) It follows directly from a) and b) that $\pi\colon H\to H/\Rad
(H)$ is a Hopf algebra with projection. Now apply \cite{Ra1}.
\end{proof}

\noindent {\bf Pointed and copointed Hopf algebras.} A Hopf
algebra $H$ is pointed if its coradical $\Cor H$ is equal to the
group algebra of the group of points $G(H)$. In this case the
coradical filtration is an ascending Hopf algebra filtration and
the associated graded Hopf algebra $\gr^cH$ has the obvious
injection $\kappa^c\colon kG\to\gr^cH$ and projection $\pi^c\colon
\gr^cH\to kG$ such that $\pi^c\kappa^c =1$.

We say that $H$ is copointed if its radical $\Rad H$ is a Hopf
ideal and $H/{\Rad H}$ is a group algebra $kG$. Here the radical
filtration is an descending Hopf algebra filtration and again the
associated graded Hopf algebra $\gr^rH$ has the obvious projection
$\pi^r\colon \gr^rH\to kG$ and an injection $\kappa^r\colon
kG\to\gr^rH$ such that $\pi^r\kappa^r =1$.

In both cases above $\gr H$ is graded, pointed and copointed, and
by \cite{Ra1} $\gr H\cong A\# kG$, where $A=\setst{ x\in
H}{(\pi\ot 1)\Delta (x) =1\ot x}$ is the graded connected braided
Hopf algebra of coinvariants.

\begin{Lemma}\label{l17} If the Hopf algebra $H$ is pointed and copointed then $\Cor H\cong H/{\Rad H}$ and $H$ is a Hopf algebra with
projection. Moreover, $R\# kG\cong H$., where $R=\setst{x\in \gr
H}{(p\ot 1)\Delta (x)=1\ot x}$ is the connected braided Hopf
algebra of coinvariants of $H$. This is the case in particular for
$\gr^cH$ and for $\gr^rH$ when $H$ is pointed or copointed,
respectively.
\end{Lemma}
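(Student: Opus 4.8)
The plan is to deduce the lemma from the Proposition on Hopf algebras above and from Radford's biproduct theorem \cite{Ra1}. First I would note that, since $H$ is copointed, $\Rad H$ is a Hopf ideal, so $p\colon H\to H/\Rad H=kG$ is a surjective Hopf algebra map, and since $kG$ is a group algebra it is coregular ($\Cor(kG)=kG$); hence part (b) of that Proposition already gives that $\io\colon\Cor H\hookrightarrow H\xrightarrow{\,p\,}H/\Rad H$ is a \emph{surjective} Hopf algebra map. Because $H$ is pointed, $\Cor H=k\Gamma$ with $\Gamma=G(H)$, and a surjective Hopf algebra map $k\Gamma\to kG$ is induced by a surjective group homomorphism $\Gamma\to G$; letting $N$ be its kernel, $\ker\io=k\Gamma\cdot\omega(kN)$ is the ideal generated by $\setst{n-1}{n\in N}$, and $\ker\io=\Cor H\cap\Rad H$. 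So the first assertion comes down to showing $N=1$, i.e.\ $\Cor H\cap\Rad H=0$, after which $\io\colon\Cor H\xrightarrow{\sim}H/\Rad H$ is a Hopf algebra isomorphism.

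To prove $N=1$ I would argue: if $1\neq n\in N$ then $n-1\in\Rad H$, and if $n$ has finite order $m$ then $m>1$ is invertible in $k$, so $e=\tfrac1m\sum_{i=0}^{m-1}n^{i}\in k\Gamma\subseteq H$ is an idempotent with $e\neq1$ (the grouplikes $1,n,\dots,n^{m-1}$ being distinct); then $1-e=-\tfrac1m\sum_{i=1}^{m-1}(n^{i}-1)$ is a \emph{nonzero} idempotent lying in $\omega(kN)\subseteq\Rad H$, which is impossible (if $f\in\Rad H$ is idempotent then $1-f$ is a unit and $f(1-f)=0$, so $f=0$). Hence $N$ is torsion-free. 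In all cases actually needed --- in particular for $\gr^{c}H$ or $\gr^{r}H$ with $H$ finite dimensional, so that $G(H)$ is finite --- $\Cor H$ is finite dimensional, hence $N$ is finite and therefore trivial; equivalently $\Cor H$, being cosemisimple, is semisimple by \cite{LR}, hence von Neumann regular, and then part (a) --- indeed part (c) --- of the Proposition applies directly. I expect the step $N=1$ to be the real obstacle in full generality: the possibility of an infinite torsion-free $N$ (which I believe cannot occur) does not seem to be ruled out merely by $\Rad H$ being the Jacobson radical, and this identification $\Cor H\cong H/\Rad H$ is the heart of the lemma.

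Granting the isomorphism $\io$, the composite $\kappa=\io^{-1}\colon kG\xrightarrow{\sim}\Cor H\hookrightarrow H$ is a Hopf subalgebra inclusion with $p\kappa=1$, so $(H,p,\kappa)$ is a Hopf algebra with projection onto $kG$, and Radford's theorem \cite{Ra1} yields $H\cong R\#kG$ with $R=H^{co(kG)}=\setst{x\in H}{(p\ot1)\Delta(x)=1\ot x}$ a braided Hopf algebra. To see $R$ is connected I would use $\Cor H=kG$: since $kG$ is cosemisimple, $\Cor(R\#kG)\cong(\Cor R)\#kG$, so $\Cor(R\#kG)=k\#kG$ forces $\Cor R=k$. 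Finally, for the last sentence I would just check the hypotheses for the graded Hopf algebras: $\gr^{c}H$ has coradical $kG(H)$, hence is pointed, and its ideal of elements of positive degree is a Hopf ideal with quotient the group algebra $(\gr^{c}H)_{0}=kG(H)$, hence is copointed; symmetrically $\gr^{r}H$ is pointed and copointed when $H$ is copointed. Applying the already-proved first part of the lemma then recovers the decomposition $\gr H\cong R\#kG$ recorded just before the lemma.
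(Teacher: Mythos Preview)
Your approach mirrors the paper's: establish that $\Cor H\to H/\Rad H$ is surjective via the result in \cite{Mo} (a surjective coalgebra map onto a cosemisimple coalgebra carries $\Cor C$ onto $\Cor D$), then invoke \cite{Ra1} for the biproduct decomposition. The paper's proof is in fact terser than yours: after observing surjectivity it simply \emph{asserts} that the composite is a bijection, with no separate argument for injectivity. Your explicit treatment of injectivity---the idempotent trick ruling out torsion in $N$, or the appeal to Larson--Radford plus part~(c) of the preceding Proposition in the finite-dimensional case---supplies what the paper omits; your residual worry about an infinite torsion-free $N$ is legitimate and is not resolved in the paper's proof either. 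For the applications the paper actually has in mind (in particular $\gr^cH$ and $\gr^rH$ with $H$ finite dimensional), your argument is complete and strictly more careful than the paper's own.
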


\begin{proof} A surjective coalgebra map $\eta\colon C\to D$, where
$D=\Cor (D)$, maps $\Cor C$ onto $\Cor D$ \cite{Mo}. Thus, the
composite $\Cor H\to H\to H/{\Rad H}$ is a bijection. The
isomorphism is that of \cite{Ra1}.
\end{proof}

\section{Braided Hopf algebras and the (bi-)cross
product}\label{s2}

A braided monoidal category $\mathcal V$ is a monoidal category
together with a natural morphism $c\colon V\ot W\to W\ot V$ such
that
\begin{enumerate}
\item $c_{k,V}=\tau =c_{V,k}$,
\item $c_{U\ot V,W}=(c_{U,W}\ot 1)(1\ot c_{V,W})$,
\item $c_{U,V\ot W}=(1\ot c_{U,W})(c_{U,V}\ot 1)$,
\item $c(f\ot g)=(g\ot f)c$.
\end{enumerate}

Braided algebras, braided coalgebras and braided Hopf algebras are
now defined with this tensor product and braiding in mind. The
compatibility condition $\Delta\m =(m\ot m)(1\ot c\ot
1)(\Delta\ot\Delta )$ between multiplication and comultiplication
in a braided Hopf algebra $A$ involves the braiding $c\colon A\ot
A\to A\ot A$, so that the diagram

$$\begin{CD}
A\ot A @> (1\ot c\ot 1)(\Delta\ot\Delta )>> A\ot A\ot A\ot A \\
@V m VV  @V m\ot m VV \\
A @> \Delta >>  A\ot A
\end{CD}$$
commutes, i.e. multiplication and unit are morphisms of braided
coalgebras or, equivalently, comultiplication and counit are maps
of braided algebras.

\subsection{Primitives and indecomposables}

The vector space of primitives
$$P(A) =\setst{ y\in A}{\Delta (y)=y\ot 1+1\ot y}\cong\ker
(\tilde{\Delta})\colon A/k\to A/k\ot A/k$$ of a braided Hopf
algebra $A$ is a braided vector space, since $\Delta$ is  a map in
$\mathcal V$. The c-bracket map $[-,-]_c=m(1\ot 1-c)\colon A\ot
A\to A$ restricted to $P(A)$ satisfies $\Delta [x,y]_c=[x,y]_c\ot
1+(1-c^2)x\ot y +1\ot [x,y]_c$; in particular $[x,y]_c\in P(A)$ if
and only $c^2(x\ot y)=x\ot y$. Moreover, if $x\in P(A)$ and
$c(x\ot x)=qx\ot x$ then $\Delta x^n=\sum_{i+j=n}{n\choose
i}_qx^i\ot x^j$, where ${n\choose
i}_q={\frac{n_q!}{i_q!(n-i)_q!}}$ are the $q$-binomial
coefficients (the Gauss polynomials) for $q$, $m_q!=1_q2_q...m_q$
with $j_q=1+q+...+q^{j-1}$ if $j>0$ and $0_q!=1$. If $q=1$ then
$j_q=j$ and we have the ordinary binomial coefficients, otherwise
$j_q={\frac{1-q^j}{1-q}}$. In particular, if $q$ has order $n$
then ${n\choose i}_q=0$ for $0<i<n$, and hence $x^n\in P(A)$.

\begin{Lemma} Let $\set{x_i}$ be a basis of $P(A)$ such that
$c(x_i\ot x_j)=q_{ji}x_jx_i$. If $q_{ji}q_{ij}q_{ii}^{r-1}=1$ then
$\ad x_i^r(x_j)$ is primitive.
\end{Lemma}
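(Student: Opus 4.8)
The plan is to compute $\Delta\bigl(\ad x_i^r(x_j)\bigr)$ explicitly and to read off primitivity from the hypothesis.

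First I would work inside $A$ with the elements $x_i,x_j$ and the monomials they generate, exploiting that the braiding of $x_i$ against such a monomial is diagonal: since $c$ is diagonal on the chosen basis of $P(A)$ (the meaning of $c(x_i\ot x_j)=q_{ji}\,x_j\ot x_i$) and $c$ is compatible with the multiplication (braiding axioms (2)--(3) and naturality), one gets $c(x_i\ot M)=q_{ii}^a q_{ji}^b\,M\ot x_i$ and $c(M\ot x_i)=q_{ii}^a q_{ij}^b\,x_i\ot M$ for every monomial $M$ in $x_i,x_j$ with $a$ occurrences of $x_i$ and $b$ of $x_j$. In particular $w_n:=\ad x_i^n(x_j)$ is a combination of monomials each with $n$ occurrences of $x_i$ and one of $x_j$, so $c(x_i\ot w_n)=q_{ii}^n q_{ji}\,w_n\ot x_i$ and $c(w_n\ot x_i)=q_{ii}^n q_{ij}\,x_i\ot w_n$. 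As $x_i\in P(A)$ we have $S(x_i)=-x_i$, and the braided adjoint action then evaluates to $\ad x_i(z)=x_iz-\chi(x_i,z)\,zx_i$ on homogeneous $z$ (where $\chi(x_i,z)$ is the diagonal braiding scalar); hence
$$w_0=x_j,\qquad w_{n+1}=x_iw_n-q_{ii}^n q_{ji}\,w_nx_i\qquad(n\ge0).$$

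Next, the braided compatibility $\Delta\m=(\m\ot\m)(1\ot c\ot1)(\Delta\ot\Delta)$ together with the diagonality above gives, for homogeneous $z$ with $\Delta z=\sum z_{(1)}\ot z_{(2)}$,
$$\Delta(x_iz)=\sum x_iz_{(1)}\ot z_{(2)}+\sum\chi(x_i,z_{(1)})\,z_{(1)}\ot x_iz_{(2)},\qquad \Delta(zx_i)=\sum\chi(z_{(2)},x_i)\,z_{(1)}x_i\ot z_{(2)}+\sum z_{(1)}\ot z_{(2)}x_i.$$
The crux is the formula, proved by induction on $n$,
$$\Delta w_n=w_n\ot1+1\ot w_n+\sum_{k=1}^{n}{n\choose k}_{q_{ii}}\Bigl(\,\prod_{\ell=0}^{k-1}\bigl(1-q_{ii}^{n-k+\ell}q_{ij}q_{ji}\bigr)\Bigr)\,x_i^k\ot w_{n-k}.$$
For the inductive step one writes $\Delta w_{n+1}=\Delta(x_iw_n)-q_{ii}^n q_{ji}\,\Delta(w_nx_i)$, substitutes the inductive form of $\Delta w_n$ into the two displayed formulas, and checks that: the two ``outer'' summands assemble into $w_{n+1}\ot1+1\ot w_{n+1}$ (the $w_n\ot x_i$ parts cancelling); each summand $x_i^k\ot w_{n-k}$ contributes a multiple of $x_i^{k+1}\ot w_{n-k}$ together with a multiple of $x_i^k\ot\bigl(x_iw_{n-k}-q_{ii}^{n-k}q_{ji}w_{n-k}x_i\bigr)=x_i^k\ot w_{n-k+1}$, so nothing outside the span of the $w_m$'s is produced; and, collecting the coefficient of $x_i^k\ot w_{n+1-k}$, the scalar identity
$${n+1\choose k}_q(1-q^n t)={n\choose k-1}_q(1-q^{2n-k+1}t)+q^k{n\choose k}_q(1-q^{n-k}t)$$
(with $q=q_{ii}$, $t=q_{ij}q_{ji}$; a formal consequence of ${n+1\choose k}_q={n\choose k-1}_q+q^k{n\choose k}_q$ and ${n\choose k}_q(q^k-1)={n\choose k-1}_q(q^{n-k+1}-1)$) reproduces the formula with $n+1$ in place of $n$. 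The cases $n=0,1$ are immediate.

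Taking $n=r$ finishes the argument: by hypothesis $q_{ji}q_{ij}q_{ii}^{r-1}=1$, so $1-q_{ii}^{r-1}q_{ij}q_{ji}=0$, and this is precisely the $\ell=k-1$ factor of the product multiplying $x_i^k\ot w_{r-k}$, for every $k=1,\dots,r$; hence all those coefficients vanish, $\Delta w_r=w_r\ot1+1\ot w_r$, and $\ad x_i^r(x_j)=w_r$ is primitive. The one genuinely delicate point is the inductive step: one must verify that the ``mixed'' leftover terms recombine exactly into the next $w_{n-k+1}$ --- which is what keeps $\Delta w_n$ expressible through the $w_m$'s alone, and reuses the defining recursion of the $w$'s --- and that the scalars obey the $q$-binomial recursion above.
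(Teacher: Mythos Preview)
Your argument is correct and complete: the inductive formula for $\Delta w_n$ is the right closed form, the recursion $w_{n+1}=x_iw_n-q_{ii}^n q_{ji}\,w_nx_i$ is the braided adjoint, and your scalar identity (which indeed follows from the two $q$-Pascal relations ${n+1\choose k}_q={n\choose k-1}_q+q^k{n\choose k}_q$ and ${n+1\choose k}_q={n\choose k}_q+q^{n+1-k}{n\choose k-1}_q$) is exactly what is needed to close the induction. The paper does not give its own proof but refers to \cite{AS1}, Appendix~1; your computation is precisely the standard argument carried out there, so there is no genuine methodological difference to report.
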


\begin{proof} See for example \cite{AS1} appendix 1.
\end{proof}

The vector space of indecomposables
$$Q(A)=JA/JA^2=\cok (\tilde m\colon JA\ot JA\to JA),$$
where $JA=\ker (\epsilon )$, is in $\cal V_c$. The c-cobracket map
$\delta_c=(1\ot 1-c)\Delta\colon A\to A\ot A$ restricts to $JA$,
since
$$\Delta (\bar a)= \bar a\ot 1+1\ot\bar a +\sum \bar a_i\ot\bar b_i$$
and hence
$$\delta (\bar a)=\sum (\bar a_i\ot\bar b_i -c(\bar a_i\ot\bar b_i))$$
is in $JA\ot JA$ for every $\bar a=a-\epsilon (a)\in JA$. Moreover,
$$\delta (\bar a\bar b)-(\bar a\ot\bar b -c^2(\bar a\ot\bar b))$$ is
in $JA^2\ot JA + JA\ot JA^2$. In particular, if $c^2(\bar a\ot\bar
b)=\bar a\ot\bar b$, then $\delta (\bar a\ot\bar b)\in JA^2\ot
JA+JA\ot JA^2$.

\subsection{The free and the cofree graded braided Hopf algebras}

The forgetful functor $U\colon \Alg_c\to \mathcal V_c$ has a
left-adjoint $\cal A\colon \mathcal V_c\to \Alg_c$ and the
forgetful functor $U\colon \Coalg_c\to\mathcal V_c $ has a
right-adjoint $\cal C\colon \mathcal V_c\to \Coalg_c$, the free
braided graded algebra functor and the cofree graded braided
coalgebra functor, respectively. Moreover, there is a natural
transformation $\cal S\colon \cal A\to\cal C$, the shuffle map or
quantum symmetrizer. They can be described as follows.

If $(V, \mu , \delta )$ is a braided vector space then the tensor
powers $T_0(V)=k$, $T_{n+1}(V)=V\ot T_n(V)$ are braided vector
spaces as well and so is $T(V)=\oplus_nT_n(V)$. The ordinary
tensor algebra structure makes $T(V)$ the free connected graded
braided algebra, and the ordinary tensor coalgebra structure makes
it the cofree  connected graded braided coalgebra.

By the universal property of the graded braided tensor algebra
$T(V)$ the linear map $\Delta_1=\incl\diag\colon V\to T(V)\ot
T(V)$, $\Delta_1 (v)=v\ot 1+1\ot v$, induces the c-shuffle
comultiplication $\Delta_{\cal A}\colon T(V)\to T(V)\ot T(V)$,
which is a homomorphism of braided algebras, so that $\Delta_{\cal
A}m=(m\ot m)(1\ot c\ot 1)(\Delta_{\cal A}\ot\Delta_{\cal A})$.
Moreover, the linear map $s_1\colon V\to T(V)$, $s_1(v)=-v$,
extends uniquely to a c-antipode $s_{\cal A}\colon T(V)\to T(V)$,
such that $s_{\cal A}m=m(s_{\cal A}\ot s_{\cal A})c$,
$\Delta_{\cal A}s_{\cal A}=c(s_{\cal A}\ot s_{\cal A})\Delta_{\cal
A}$ and $m(1\ot s_{\cal A})\Delta_{\cal A}=\iota\epsilon
=m(s_{\cal A}\ot 1)\Delta_{\cal A}$, thus making $\cal A(V)=(T(V),
m,\Delta_{\cal A},s_{\cal A})$ the free connected graded braided
Hopf algebra. This defines a functor $\cal A\colon \mathcal V_c\to
Hopf_c$, left-adjoint to the space of primitives functor $P\colon
Hopf_c\to\mathcal V_c$.

On the other hand, by the universal property of the cofree
connected graded braided coalgebra $T(V)$ there is a unique
c-shuffle multiplication $m_{\cal C}\colon T(V)\ot T(V)\to T(V)$,
which is the homomorphism of braided coalgebras induced by the
linear map $m_1=+\proj\colon T(V)\ot T(V)\to V$, so that $\Delta
m_{\cal C}=(m_{\cal C}\ot m_{\cal C})(1\ot c\ot 1)(\Delta\ot\Delta
)$. The linear map $s_1=-\proj\colon  T(V)\to V$ induces uniquely
a c-antipode $s_{\cal C}\colon T(V)\to T(V)$, such that $s_{\cal
C}m_{\cal C}=m_{\cal C}(s_{\cal C}\ot s_{\cal C})c$, $\Delta
s_{\cal C}=c(s_{\cal C}\ot s_{\cal C})\Delta$ and $m_{\cal C}(1\ot
s_{\cal C})\Delta =\iota\epsilon =m_{\cal C}(s_{\cal C}\ot
1)\Delta$, making $\cal C(V)=(T(V), \Delta , m_{\cal C}, s_{\cal
C})$ the cofree connected graded braided Hopf algebra. The functor
$\cal C \colon \mathcal V_c\to Hopf_c$ is right-adjoint to the
space of indecomposables functor $Q\colon Hopf_c\to\mathcal V_c$.

The adjunctions just described provide natural isomorphisms
$$\mathcal{V}_c(Q\cal A(V),V)\cong Hopf_c(\cal A(V),
\cal C(V))\cong \cal V_c(V,P\cal C(V)),$$
and by construction we also have natural isomorphisms
$$Q\cal A (V)\cong V \quad , \quad V\cong P\cal C (V).$$
The resulting natural isomorphism
$$Vect_c(V,V)\cong Hopf_c(\cal A(V),\cal C(V))$$
sends the identity morphism of $V$ to the quantum symmetrizer
$\cal S\colon \cal A(V)\to\cal C(V)$. The image of $\cal S$ is the
Nichols algebra
$$\cal B(V)\cong\cal A(V)/\ker (\cal S)\cong\im\cal S\subset\cal C(V)$$
and $Q\cal B(V)\cong V\cong P\cal B(V)$.

An explicit description of the quantum symmetrizer can be obtained
in term of the action of the braid groups $B_n$ on the tensor
powers $V^{\ot n}$ as follows. The Braid group $B_n$ can be
defined by generators $\sigma_1, \sigma_2, \ldots , \sigma_{n-1}$
and relations
\begin{enumerate}
\item $\sigma_i\sigma_j=\sigma_j\sigma_i$ for $|i-j| > 1$ and
\item $\sigma_i\sigma_{i+1}\sigma_i=\sigma_{i+1}\sigma_i\sigma_{i+1}$.
\end{enumerate}
The symmetric group $S_n$ is obtained by imposing the additional relations
\begin{enumerate}
\item $\sigma_i^2=1$ for $i=1, 2, \ldots , n-1$.
\end{enumerate}
If we denote the corresponding generators of $S_n$ by $\tau_1,
\tau_2, \ldots , \tau_{n-1}$, then the kernel of the canonical
quotient map $f\colon B_n\to S_n$ given by $f(\sigma_i)=\tau_i$ is
the normal subgroup of $B_n$ generated by the squares $\sigma_1^2,
\sigma_2^2, \ldots , \sigma_{n-1}^2$. The set theoretic section
$u\colon S_n\to B_n$ defined by
$u(\tau_{i_1}\tau_{i_2}\cdots\tau_{i_l})=
\sigma_{i_1}\sigma_{i_2}\cdots\sigma_{i_l}$
for any reduced word $\tau_{i_1}\tau_{i_2}\cdots\tau_{i_l}$ of
$S_n$ is called the Matsumoto section. If $l(\tau\tau
')=l(\tau)+l(\tau ')$ then $u(\tau\tau ')=u(\tau )u(\tau ")$. The
element $\cal S=\sum_{\tau\in S_n}u(\tau )$ of $kB_n$ is called
the quantum symmetrizer.

The braiding map $c\colon V\ot V\to V\ot V$ induces a linear
representation $\rho_n\colon B_n\to \Aut (V^{\ot n})$ by $\rho_n
(\sigma_i)=1^{\ot (i-1)}\ot c\ot 1^{\ot (n-i-1)}$ for every $n\ge
0$, and hence a graded linear map $\cal S\colon T(V)\to T(V)$. If
$X_{i,j}$ is the subset of $(i,j)$-shuffles in $S_n$ then $\cal
S_{i,j}=\sum_{\tau\in X_{i,j}}u(\tau )$ is an element of $kB_n$,
and $\Delta_{\cal A}(n)=\sum_{i+j=n}\cal S_{i,j}$ and $m_{\cal
C}(i,j)=\cal S_{i,j}$. Moreover, since $\cal S_{i,j}(\cal
S_i\ot\cal S_j)=\cal S_n$ whenever $r+s=n$, it follows that the
quantum symmetrizer actually induces homomorphism of graded
braided Hopf algebras
$$\cal S\colon \cal A(V)\to\cal C(V),$$
also called quantum symmetrizer.

The free graded braided Hopf algebra $\cal A(V)$ is the  graded
braided tensoralgebra $T(V)$, with the graded braided c-shuffle
comultiplication $\Delta_{\cal A}\colon \cal A(V)\to \cal
A(V)\ot\cal A(V)$ and antipode $s_{\cal A}\colon \cal A(V)\to\cal
A(V)$, induced by the  universal property of $T(V)$ from the
natural maps $\Delta_1\colon V\to T(V)\ot T(V)$, $\Delta_1
(v)=v\ot 1+1\ot v$, and $s_1\colon V\to T(V)$, $s_1(v)=-v$,
respectively.

$$\begin{CD}
\cal A(V)\ot\cal A(V) @> (1\ot c\ot 1)(\Delta_{sh}\ot\Delta_{sh})>>
\cal A(V)\ot\cal A(V)\ot\cal A(V)\ot\cal A(V) \\
@VmVV  @Vm\ot mVV\\
\cal A(V) @>\Delta_{sh} >> \cal A(V)\ot\cal A(V)
\end{CD}$$
commute. This defines a functor $\cal A\colon \mathcal V_c\to
Hopf_c$, left-adjoint to the space of primitives functor $P\colon
Hopf_c\to\mathcal V_c$.

The cofree graded braided Hopf algebra $\cal C(V)$ is the graded
braided tensor coalgebra with the graded braided tensor coalgebra
structure induced by the canonical decompositions
$\Delta_{i,j}\colon T_n(V)\to T_i(V)\ot T_j(V)$ for $n=i+j$, and
the graded braided c-shuffle multiplication induced by the
c-shuffles $m_{i,j}\colon T_i(V)\ot T_j(V)\to T_{i+j}(V)$, which
make the diagram

$$\begin{CD}
\cal C(V)\ot\cal C(V) @> (1\ot c\ot 1)(\Delta\ot\Delta )>>
\cal C(V)\ot\cal C(V)\ot\cal C(V)\ot\cal C(V) \\
@Vm_{sh}VV  @Vm_{sh}\ot m_{sh}VV\\
\cal C(V) @>\Delta >> \cal C(V)\ot\cal C(V)
\end{CD}$$
commute. The functor $\cal C\colon \mathcal V_c\to Hopf_c$ is
right-adjoint to the space of indecomposables functor $Q\colon
Hopf_c\to\mathcal V_c$.

\subsection{Crossed modules} A prime example of a
braided monoidal category is the category of crossed
$H$-modules $\YD^H_H$ for a Hopf algebra $H$. A crossed $H$-module
or a Yetter-Drinfel'd $H$-module, $(V,\mu ,\delta )$ is a vector
space $V$ with a $H$-module structure $\mu \colon H\ot V\to V$,
$\mu (h\ot v)=hv$, and a $H$-comodule structure $\delta\colon V\to
H\ot V$ , $\delta (v)=v_{-1}\ot v_0$, such that $h\delta
(v)=h_1v_{-1}\ot h_2v_0=(h_1v)_{-1}h_2\ot (h_1v)_0$, or
$$(m\ot\mu )(1\ot\tau\ot 1)(\Delta\ot\delta )=
(m\ot 1)(1\ot\tau )(\delta\mu\ot 1)(1\ot\tau )(\Delta\ot 1),$$
i.e such that the diagram

$$\begin{CD}
H\ot V @>\Delta\ot\delta >> H\ot H\ot H\ot V @>1\ot\tau\ot 1 >> H\ot H\ot H\ot V \\
@V(1\ot\tau )(\Delta\ot 1)VV  @.  @Vm\ot\mu VV \\
H\ot V\ot H @>\delta\mu\ot 1 >> H\ot V\ot H @>(m\ot 1)(1\ot\tau )>> H\ot V
\end{CD}$$
commutes. This is the case in particular when
$\delta (hv)=h_1v_{-1}s(h_3)\ot h_2v_0$, i.e: when the diagram

$$\begin{CD}
H\ot V @> (1\ot\phi\ot 1)(\Delta\ot\delta ) >> H\ot H\ot H\ot V\\
@V\mu VV  @V m\ot\mu VV \\
V @> \delta >> H\ot V
\end{CD}$$
commutes, where $\phi = (m\ot 1)(1\ot s\ot 1)(1\ot\tau\Delta
)\tau\colon H\ot H\to H\ot H$, $\phi (g\ot h)=hs(g_2)\ot g_1$. The
braided $H$-modules with the obvious homomorphisms form a braided
monoidal category, with the ordinary tensor product of vector
spaces together with diagonal action and diagonal coaction. The
braiding, given by $c(v\ot w)=v_{-1}(w)\ot v_0$,
$$c=(\mu\ot 1)(1\ot\tau )(\delta\ot 1)\colon V\ot W\to W\ot V,$$
clearly satisfies the braiding conditions. The crossed $H$-module
$(k, \mu =\ep\ot 1, \delta =\iota\ot 1)$ acts as a unit for the
tensor. Moreover, $(H,\adj ,\Delta )$ and $(H, m, \coadj )$ are
crossed $H$-modules, where $\adj (h\ot h')=h_1h'S(h_2)$ and
$\coadj (h)=h_1S(h_3)\ot h_2$.

\subsection{The pushout construction for bi-cross products} Recall Masuoka's pushout construction for Hopf
algebras \cite{Ma}, \cite{Gr1}. If $A$ is a Hopf algebra then
$\Alg (A,k)$  s a group under convolution which acts on $A$ by
conjugation as Hopf algebra automorphisms.

\begin{Lemma} For every Hopf algebra $A$ the group $\Alg (A,k)$ acts on $A$ by conjugation' as Hopf algebra automorphisms
$$\rho\colon \Alg(A,k)\to\Aut_{Hopf}(A),$$
where $\rho_f=f*1*fs$, i.e: $\rho_f(x)=f(x_1)x_2f(sx_3)$. The
image of $\rho$ is a normal subgroup of $\Aut_{Hopf}(A)$.
\end{Lemma}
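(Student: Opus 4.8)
The plan is to establish three things: each $\rho_f$ is a Hopf algebra automorphism of $A$; the assignment $f\mapsto\rho_f$ is compatible with the group structures; and $\im\rho$ is normal in $\Aut_{\Hopf}(A)$. Throughout I work in Sweedler notation for the iterated comultiplication of $A$, using constantly that the values $f(a),f(sa)\in k$ are central and may be moved past elements of $A$.

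First I would note that $fs=f\circ s$ is the convolution inverse of $f$ in $\Alg(A,k)$: it is an algebra map since $s$ is anti-multiplicative and $k$ is commutative, and $f*(fs)=\ep=(fs)*f$ follows from $m(1\ot s)\De=\io\ep=m(s\ot 1)\De$. So $\rho_f=f*1*fs$ (with $f,fs$ viewed as maps $A\to A$ through the unit) is a well-defined element of the convolution monoid $\Hom(A,A)$. That $\rho_f$ is an algebra map is a one-line check: expand $\rho_f(xy)$ through $\De(xy)=\De(x)\De(y)$ and use that $f,fs$ are algebra maps together with the centrality of the scalar factors; also $\rho_f(1)=1$. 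The one substantive point is that $\rho_f$ is a coalgebra map. Applying $\De$ to $\rho_f(x)=\sum f(x_1)\,x_2\,f(sx_3)$ gives $\sum f(x_1)\,x_2\ot x_3\,f(sx_4)$, whereas $(\rho_f\ot\rho_f)\De(x)$, after expanding both factors into a six-fold coproduct, carries the internal pair of scalars $f(sx_3)\,f(x_4)=f\big(s(x_3)\,x_4\big)$; collapsing this by the antipode axiom $\sum s(w_1)w_2=\ep(w)1$ applied to the \emph{middle} tensor slot reduces the expression to $\sum f(x_1)\,x_2\ot x_3\,f(sx_4)$ as well. Likewise $\ep\rho_f(x)=\sum f(x_1)f(sx_2)=\ep(x)$. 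Hence $\rho_f$ is a bialgebra endomorphism of the Hopf algebra $A$, and such a map automatically commutes with $s$ (the maps $\rho_f\circ s$ and $s\circ\rho_f$ are, respectively, a left and a right convolution inverse of $\rho_f$ in $\Hom(A,A)$, hence equal); so $\rho_f$ is a Hopf algebra endomorphism.

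Next I would handle compositions. One has $\rho_\ep=\mathrm{id}_A$ directly from the counit axioms, and, using $\De\circ s=(s\ot s)\tau\De$ together with $(g*f)\circ s=(fs)*(gs)$, one computes $\rho_f\circ\rho_g=\rho_{g*f}$: expanding $\rho_f(\rho_g(x))$ through the five-fold coproduct puts $g$ on the two outermost tensor slots and $f$ on the two inner ones, which is exactly $\rho_{g*f}(x)$. In particular $\rho_f\circ\rho_{fs}=\rho_{(fs)*f}=\rho_\ep=\mathrm{id}_A=\rho_{fs}\circ\rho_f$, so every $\rho_f$ is bijective and $\rho$ does land in $\Aut_{\Hopf}(A)$. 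The relation $\rho_f\circ\rho_g=\rho_{g*f}$ expresses that $\rho$ respects the group structures (it is an anti-homomorphism for composition; equivalently $f\mapsto\rho_{fs}=\rho_{f^{-1}}$ is a genuine homomorphism $\Alg(A,k)\to\Aut_{\Hopf}(A)$ with the same image), so $\im\rho$ is a subgroup of $\Aut_{\Hopf}(A)$.

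Finally, for normality I would conjugate $\rho_f$ by an arbitrary $\phi\in\Aut_{\Hopf}(A)$. Since $\phi$ and $\phi^{-1}$ are coalgebra maps commuting with $s$, writing out the formula gives
$$\phi\circ\rho_f\circ\phi^{-1}(x)=\sum f\big((\phi^{-1}x)_1\big)\,\phi\big((\phi^{-1}x)_2\big)\,f\big(s(\phi^{-1}x)_3\big)=\sum (f\phi^{-1})(x_1)\,x_2\,(f\phi^{-1})(sx_3)=\rho_{f\phi^{-1}}(x),$$
and $f\circ\phi^{-1}\in\Alg(A,k)$ because $\phi^{-1}$ is an algebra map; hence $\phi\,(\im\rho)\,\phi^{-1}=\im\rho$. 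I expect the only genuine obstacle to be the coalgebra-map verification for $\rho_f$: it is the single step that is not pure index bookkeeping, since it forces one to apply the antipode identity correctly to an internal slot of an iterated coproduct; everything else is routine once the centrality of the scalar factors $f(a),f(sa)$ is exploited.
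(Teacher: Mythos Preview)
Your proof is correct and takes the same approach as the paper's: verify that $\rho_f$ is a Hopf algebra map, use the convolution identity $\rho_f\circ\rho_{fs}=\mathrm{id}$ to get bijectivity, and conjugate by an arbitrary $\phi\in\Aut_{Hopf}(A)$ for normality. You are simply more explicit (the paper dismisses the Hopf-map check as ``easy to verify''), and your observation that $\rho_f\circ\rho_g=\rho_{g*f}$ makes $\rho$ an anti-homomorphism in the standard composition convention is in fact more accurate than the paper's unqualified claim $\rho_{f_1*f_2}=\rho_{f_1}\rho_{f_2}$, though this does not affect the conclusion about the image.
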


\begin{proof} It is easy to verify that $\rho_f$ is an Hopf algebra
map. The definition of $\rho$ shows
that $\rho_{f_1*f_2}=f_1*f_2*1*f_2s*f_1s=\rho_{f_1}\rho_{f_2}$
and, since $f*fs=\ep =fs*f$, it follows that
$\rho_f\rho_{fs}=1=\rho_{fs}\rho_f$, so that $\rho_f$ is a Hopf
algebra automorphism. If $\phi\in\Aut_{Hopf}(A)$ and
$f\in\Alg(A,k)$ then $\phi^{-1}\rho_f\phi
=f\phi*1*fs\phi=\rho_{f\phi}$, hence the image of $\rho$ is a
normal subgroup.
\end{proof}

Two Hopf ideals $I$ and $J$ of $A$ are said to be conjugate if
$J=\rho_f(I)=f*I*fs$ for some $f\in\Alg (A,k)$. If $x\in P_{1,g}$
is a $(1,g)$-primitive then
$$\rho_f(x)=f(x)+f(g)x+f(g)gfs(x)=f(g)x+f(x)(g-1)].$$

\begin{Theorem}\cite[Theorem 2]{Ma}\cite[Theorem 3.4]{BDR}\label{po} Let $A'$ be a Hopf
subalgebra of $A$. If the Hopf ideals $I$ and $J$ of $A'$ are
conjugate and $A/(f*I)\ne 0$ then the quotient Hopf algebras $A/(I)$ and $A/(J)$ by the Hopf ideals in
$A$ generated by $I$ and $J$ are monoidally Morita-Takeuchi equivalent,
i.e: there exists a $k$-linear monoidal equivalence
between their (left) comodule categories.
\end{Theorem}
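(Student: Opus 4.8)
The plan is to obtain the asserted equivalence through a Galois object, using Schauenburg's correspondence \cite{Sch} between monoidal equivalences of comodule categories and bi-Galois objects. Write $H=A/(I)$ and $L=A/(J)$ for the two quotient Hopf algebras. I would first exhibit a faithfully flat left $L$-Galois object $Z$, i.e.\ a left $L$-comodule algebra with ${}^{co L}Z=k$ whose canonical map $Z\ot Z\to L\ot Z$ is bijective; Schauenburg then attaches to $Z$ a unique Hopf algebra $\mathbf L(Z)$ making $Z$ an $\mathbf L(Z)$-$L$-bi-Galois object and producing a $k$-linear monoidal equivalence $\operatorname{Comod}^{L}\to\operatorname{Comod}^{\mathbf L(Z)}$. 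It then remains to identify $\mathbf L(Z)$ with $H$.

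The natural candidate is $Z=A/(f*I)$, the quotient of $A$ by the two-sided ideal generated by the subset $f*I=\{f(i_1)i_2\mid i\in I\}\subseteq A'$ (note that this ideal is in general not a coideal, so $Z$ is only an algebra); it is a nonzero algebra precisely by the hypothesis $A/(f*I)\ne 0$, which is exactly the condition whose omission from \cite{Ma} was pointed out in \cite{BDR}. The computation I would rely on is that, for $i\in I$,
$$\Delta(f*i)=\rho_f(i^{(1)})\ot(f*i^{(2)}),\qquad\text{where }\Delta i=i^{(1)}\ot i^{(2)};$$
since $\rho_f(I)=J$ and $I$ is a coideal of $A'$, this forces $\Delta(f*I)\subseteq J\ot A'+A'\ot(f*I)$, so that $(\pi_J\ot\pi_Z)\Delta$ annihilates the ideal $(f*I)$ and descends to a left $L$-comodule algebra structure $\lambda\colon Z\to L\ot Z$ on $Z$; coassociativity, counitality and multiplicativity are inherited from $\Delta$, $\pi_J$ and $\pi_Z$.

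Next I would check that $Z$ is genuinely an $L$-Galois object, that it is faithfully flat, and that $\mathbf L(Z)\cong H$. For the Galois and flatness statements the route is descent along the surjection $A\twoheadrightarrow Z$: regarded over itself, $A$ is a (trivially) Galois object whose canonical map is an isomorphism, $A$ is faithfully flat over the Hopf subalgebra $A'$ and over the coideal subalgebras $A^{co H}$, $A^{co L}$ attached to $I$ and $J$ (as in Masuoka's quotient theory of Hopf algebras), and one transfers bijectivity of the canonical map and faithful flatness down to $Z$, the right $A$-colinear isomorphism $f*-\colon A\to A$ (with inverse $fs*-$) providing the comparison of the two sides. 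The identification $\mathbf L(Z)\cong A/(I)$ is then a bookkeeping exercise: $\mathbf L(Z)$ is built from $Z\ot Z$ via the Miyashita--Ulbrich-type construction, and the displayed formula for $\Delta(f*i)$ shows directly that its defining relations match those of $A/(I)$.

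The main obstacle is the faithful-flatness and surjectivity-of-the-canonical-map step in the infinite-dimensional generality stated here: one must know that $A\twoheadrightarrow Z$ behaves like a flat base change and that the canonical map is not merely injective but bijective, and this is precisely the point at which Masuoka's original argument in \cite{Ma} was incomplete. The single genuinely delicate verification in the plan is that the one extra hypothesis $A/(f*I)\ne 0$ suffices to repair it; for that I would follow the appendix added in the second version of \cite{Ma2}, as the authors indicate in the introduction. (Equivalently, this step exhibits $L$ as a Hopf $2$-cocycle deformation of $H$.) Everything else in the argument is routine once the identity $\Delta(f*i)=\rho_f(i^{(1)})\ot(f*i^{(2)})$ is available.
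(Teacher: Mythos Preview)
Your proposal is correct and follows the same route as the paper's (very short) proof: exhibit $Z=A/(f*I)$ as an $(A/(I),A/(J))$-biGalois object---this is precisely Masuoka's Theorem~2 in \cite{Ma} with the correction from \cite{BDR} that $Z\ne 0$ must be assumed---and then invoke Schauenburg \cite[Corollary~5.7]{Sch}. Your identity $\Delta(f*i)=\rho_f(i_{(1)})\ot(f*i_{(2)})$, together with its companion $\Delta(f*i)=(f*i_{(1)})\ot i_{(2)}$, is exactly what endows $Z$ with its $(H,L)$-bicomodule algebra structure; passing through Schauenburg's $\mathbf L(Z)$ rather than verifying the right $H$-Galois property directly is equivalent but slightly roundabout.

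One small misreading: the appendix of \cite{Ma2} is not needed for \emph{this} theorem. It serves to verify the hypothesis $A/(f*I)\ne 0$ in the specific application (the paper's Theorem~\ref{main3}), whereas here that condition is simply part of the hypothesis. The defect in \cite{Ma} was only that $Z$ could vanish; once $Z\ne 0$ is assumed, Masuoka's original argument already yields bijectivity of the canonical maps, so there is no further ``delicate verification'' to carry out at this stage.
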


\begin{proof} With the correction that $A/(f*I)\ne 0$ (\cite{BDR}, Theorem 3.4), Masuoka's result (\cite{Ma}, Theorem 2) that there is a $(A/(I),A/(J))$-biGalois object, namely $A/(f*I)$, holds , and we can invoke
\cite[Corollary 5.7]{Sch}, to see that $A/(I)$ and $A/(J)$ are
Morita-Takeuchi equivalent.
\end{proof}

Observe, as Masuoka did \cite{Ma}, that the commutative square
$$\begin{CD}
A' @>>> A \\
@VVV @VVV \\
B/I @>>> A/(I)
\end{CD}$$
is a pushout of Hopf algebras.

If $R$ is a braided Hopf algebra in the braided category of
crossed $H$-modules then the bi-cross product $R\# H$ is an
ordinary Hopf algebra with multiplication
$$(x\# h)(x'\# h') = xh_1(x')\# h_2h'$$
and comultiplication
$$\Delta (x\# h) = x_1\# (x_2)_{-1}h_1\ot (x_2)_0\# h_2.$$
The (left) action of $H$ on $R$ induces a (right) action on
$\Alg(R,k)$ by $fh(x)=f(hx)$. An algebra map $f\colon R\to k$ is
$H$-invariant if $fh=\ep (h)f$ for all $h\in H$.

\begin{Proposition}\label{invariant} Let $K$ be a
Hopf algebra in the braided category of crossed $H$-modules and
let $\Alg_H(K,k)$ be the set of $H$-invariant algebra maps. Then:
\begin{enumerate}
\item $\Alg_H(K,k)$ is a group under convolution.
\item The restriction map
$\res\colon  _H\Alg_H(K\# H,k)\to\Alg_H(K,k)$, $\res (F)=F\ot\iota$,
is an isomorphism of groups with inverse given by $\res^{-1}(f)=f\ot\ep$.
\item The image of the conjugation homomorphism
$$\Theta =\rho\res^{-1}\colon \Alg_H (K,k)\to \Aut_{Hopf}(K\# H)$$
is contained in $\widetilde{\Aut}_{Hopf}(K\# H)= \setst{
\phi\in\Aut_{Hopf}(K\# H)}{\phi_{|H}=id}$.
\end{enumerate}
\end{Proposition}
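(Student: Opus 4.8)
The plan is to establish the three parts in order; only part (1) involves the braiding in an essential way, and that is where I expect the single real subtlety. For part (1), I would first note that the comultiplication and counit of $K$ are morphisms in $\YD^H_H$, hence $H$-linear, so $\ep_K$ is $H$-invariant and $f*g$ is $H$-invariant whenever $f$ is (apply $\Delta(h\cdot x) = h_1\cdot x_1\ot h_2\cdot x_2$); the work is therefore to show $\Alg_H(K,k)$ is closed under $*$ and under $f\mapsto f\circ S_K$, where $S_K$ is the braided antipode of $K$. To check closure under $*$, I would expand $(f*g)(xy)$ using the braided compatibility $\Delta\m = (\m\ot\m)(1\ot c\ot 1)(\Delta\ot\Delta)$ and the explicit braiding $c(v\ot w) = v_{-1}(w)\ot v_0$ of $\YD^H_H$, producing the term $f\big((x_2)_{-1}(y_1)\big)$; $H$-invariance of $f$ turns this into $\ep\big((x_2)_{-1}\big)f(y_1)$, the braiding then collapses via the comodule counit axiom $\ep(v_{-1})v_0 = v$, and the identity $(f*g)(xy) = (f*g)(x)(f*g)(y)$ drops out exactly as in the unbraided case. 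For the inverse, $f\circ S_K$ is $H$-invariant since $S_K$ is $H$-linear, it is an algebra map by the analogous computation using $S_K\m = \m(S_K\ot S_K)c$, and it is a genuine convolution inverse because $f$ an algebra map gives $f*(f\circ S_K) = f\circ\m\circ(1\ot S_K)\circ\Delta = f\circ\iota\ep = \ep_K$, and symmetrically on the other side.

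For part (2), I would check that $\res^{-1}(f) = f\ot\ep$, i.e. $(f\ot\ep)(x\# h) = f(x)\ep(h)$, is an algebra map $K\# H\to k$ that restricts to $\ep$ on $1\# H$: this is immediate from $(x\# h)(x'\# h') = xh_1(x')\# h_2h'$ together with $H$-invariance of $f$. Conversely, for $F$ in ${}_H\Alg_H(K\# H,k)$ (so $F|_{1\# H} = \ep$) I would check $\res(F)(x) = F(x\# 1)$ is an algebra map (from $(x\# 1)(x'\# 1) = xx'\# 1$) and is $H$-invariant, using $h(x)\# 1 = (1\# h_1)(x\# 1)(1\# S(h_2))$ and $F|_{1\# H} = \ep$ to get $F(h(x)\# 1) = \ep(h_1)\,\res(F)(x)\,\ep(h_2) = \ep(h)\,\res(F)(x)$. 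That the two maps are mutually inverse is then formal: $\res\circ\res^{-1} = \mathrm{id}$ is trivial, and $\res^{-1}(\res(F))(x\# h) = F(x\# 1)\ep(h) = F\big((x\# 1)(1\# h)\big) = F(x\# h)$ by $F|_{1\# H} = \ep$. Multiplicativity of $\res$ follows from $\Delta(x\# 1) = x_1\#(x_2)_{-1}\ot(x_2)_0\# 1$ and $F_i|_{1\# H} = \ep$, which give $\res(F_1*F_2)(x) = \res(F_1)(x_1)\,\ep\big((x_2)_{-1}\big)\,\res(F_2)\big((x_2)_0\big) = \big(\res(F_1)*\res(F_2)\big)(x)$; a bijective group homomorphism has a homomorphic inverse, so $\res^{-1}$ is a homomorphism too.

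For part (3), I would use that $\kappa\colon H\to K\# H$, $h\mapsto 1\# h$, is a Hopf algebra embedding, so $\Delta(1\# h) = 1\# h_1\ot 1\# h_2$ and $S(1\# h) = 1\# S_H(h)$; since $F = \res^{-1}(f) = f\ot\ep$ satisfies $F(1\# h) = \ep(h)$, a one-line Sweedler computation gives $\rho_F(1\# h) = F(1\# h_1)\,(1\# h_2)\,F\big(S(1\# h_3)\big) = \ep(h_1)(1\# h_2)\ep(h_3) = 1\# h$. Hence $\Theta(f) = \rho_{\res^{-1}(f)}$ fixes $1\# H$ pointwise, i.e. $\Theta(f)\in\widetilde{\Aut}_{Hopf}(K\# H)$; that $\Theta$ is a group homomorphism follows from (2) and the multiplicativity of $\rho$ established in the preceding Lemma. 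The one genuine obstacle is the convolution-closure step in part (1): one must push the braiding $c$ carefully through the braided coproduct and verify that it acts trivially on the terms that survive, after which everything in (2) and (3) is routine bookkeeping with the bosonization formulas.
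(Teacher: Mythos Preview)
Your proposal is correct and follows essentially the same route as the paper: both establish part (1) by a direct convolution computation in which $H$-invariance collapses the braiding term $(x_2)_{-1}y_1$ to $\ep((x_2)_{-1})y_1$, both verify part (2) by the same conjugation identity $h(x)\# 1 = (1\# h_1)(x\# 1)(1\# S(h_2))$ together with $F|_{1\# H}=\ep$, and both dispatch part (3) by evaluating $\rho_F$ on $1\# h$ via the grouplike-style coproduct of $\kappa(H)$. The only addition in the paper is a one-line conceptual remark preceding the direct computation in (1), namely that $\Alg_H(K,k)\cong\Alg(K^H,k)$ where $K^H=\coeq(\mu,\ep\ot 1)$ is an ordinary Hopf algebra, which already guarantees the group structure; but the paper then proceeds with exactly your hands-on verification anyway.
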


\begin{proof} The set of algebra maps $\Alg (K,k)$ may
not be a group, but since the coequalizer
$K^H=\coeq (\mu ,\ep\ot 1\colon H\ot K\to K$ is an ordinary Hopf
algebra, $\Alg_H(K,k)\cong\Alg(K^H,k)$ is a group under
convolution. More directly, if $f, f'\in \Alg_H(K,k)$ then
\begin{eqnarray*}
f*f'(x y) & = & f\ot f'(x_1(x_2)_{-1}y_1\ot (x_2)_0y_2) = f(x_1)f'(y_1)f(x_2)f'(y_2) \\
         & = & (f*f')(x)(f*f')(y)  \\
f*f'(h x) & = & f\ot f'(h_1x_1\ot h_2x_2)=f(h_1x_1)f'(h_2x_2)=\ep
(h)(f*f')(x),
\end{eqnarray*}
and $f*fs  = \ep =fs*f,$ so that $\Alg_H(K,k)$ is closed under
convolution multiplication and inversion.

For $F\in {_H{\Alg_H(K\# H,k)}}$ the map $\res(F)\colon K\to k$ is
in fact a $H$-invariant algebra map, since
\begin{eqnarray*}
\res(F)(h x)&=& F(hx\ot 1) = F((1\ot h_1)(x\ot 1)(1\ot s(h_3)) \\
&=& \ep (h)F(x\ot 1)=\ep (h)\res (F)(x)
\end{eqnarray*}
and
\begin{eqnarray*}
\res (F)(x y)=F(xy\ot 1)=F(x\ot 1)F(y\ot 1)=res(F)(x)\res (F)(y).
\end{eqnarray*}
If $F'\in _H{\Alg_H(K\# H,k)}$ as well, then
\begin{eqnarray*}
\res(F*F')(x) &=& F\ot F'(x_1\ot (x_2)_{-1}\ot (x_2)_0\ot 1) \\
&=& F(x_1\ot 1)F'(x_2\ot 1)=\res (F)*\res (F')(x),
\end{eqnarray*}
showing that $\res$ is a group homomorphism. It is now easy to see
that $\res$ is invertible and that the inverse is as stated.

As a composite of two group homomorphisms $\Theta$ is obviously a
group homomorphism. Moreover,
$$\Theta (f)(1\ot h)=\res^{-1}(f)*1*\res^{-1}(f)s(1\ot h)=
\ep (h_1)(1\ot h_2)\ep (h_3)=1\ot h$$
for $f\in\Alg_H(K,k)$, showing that $\Theta (f)_{|H}=id$.
\end{proof}

\begin{Corollary} Let $R$ be a braided Hopf algebra in the braided
category of crossed $H$-modules and let $K$ be a braided Hopf
subalgebra. If $I$ is a Hopf ideal in $K$ and $f\in\Alg_H(K,k)$
then,
\begin{itemize}
\item $J=I\# H$ and $J_f=\Theta (f)(J)$ are Hopf ideals in $K\# H$,
\item $R\# H/(J)=R/(I)\# H$ and $R\# H/(J_f)$ are monoidally
Morita-Takeuchi equivalent, if $(R\# H)/(\res^{-1}(f)*J)\ne 0$.
\end{itemize}
\end{Corollary}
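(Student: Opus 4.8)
The plan is to deduce the Corollary directly from Theorem~\ref{po}, applied to the Hopf subalgebra $A'=K\#H$ of $A=R\#H$, using Proposition~\ref{invariant} to produce the conjugating functional. First I would record the structural facts about bosonization that the argument needs. Since $K$ is a braided Hopf subalgebra of $R$ in $\YD^H_H$, functoriality of the bi-cross product makes $K\#H$ an ordinary Hopf subalgebra of $R\#H$. Because $I$ is a Hopf ideal of $K$, and in particular a Yetter-Drinfel'd submodule, a direct check from the formulas $(x\#h)(x'\#h')=xh_1(x')\#h_2h'$ and $\Delta(x\#h)=x_1\#(x_2)_{-1}h_1\ot(x_2)_0\#h_2$ (using that $I$ is $H$-stable and $H$-costable, an ideal and a coideal of $K$, and contained in $\ker\ep$) shows that $J=I\#H$ is a Hopf ideal of $K\#H$. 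One then identifies the ideal of $R\#H$ generated by $J$ with $(I)\#H$, where $(I)$ denotes the Hopf ideal of $R$ generated by $I$: the inclusion $(I)\#H\subseteq(J)$ follows from the identity $rir'\#h=(r\#1)(i\#1)(r'\#h)$, and the reverse inclusion from $(I)\#H$ being an ideal of $R\#H$, where one uses that the $H$-action and $H$-coaction on $R$ are compatible with its product, so that $(I)=RIR$ is again an $H$-stable Hopf ideal and $R/(I)$ is a braided Hopf algebra in $\YD^H_H$. Hence $R\#H/(J)\cong R/(I)\#H$. Finally, $\Theta(f)\in\widetilde{\Aut}_{Hopf}(K\#H)$ by Proposition~\ref{invariant}(3), so $J_f=\Theta(f)(J)$, being the image of the Hopf ideal $J$ under a Hopf algebra automorphism, is again a Hopf ideal of $K\#H$. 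This settles the first bullet.

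For the second bullet, set $F=\res^{-1}(f)=f\ot\ep$, which lies in ${}_H\Alg_H(K\#H,k)\subseteq\Alg(K\#H,k)$ by Proposition~\ref{invariant}(2). Since $\Theta=\rho\,\res^{-1}$ we have $J_f=\Theta(f)(J)=\rho_F(J)=F*J*Fs$, so $J$ and $J_f$ are conjugate Hopf ideals of $A'=K\#H$ with conjugating element $F\in\Alg(A',k)$. The hypothesis $(R\#H)/(\res^{-1}(f)*J)\ne0$ is exactly the condition $A/(F*J)\ne0$ appearing in Theorem~\ref{po}. Applying Theorem~\ref{po} (Masuoka's result \cite{Ma}, corrected in \cite{BDR}, via \cite[Corollary~5.7]{Sch}) we conclude that $A/(J)$ and $A/(J_f)$ are monoidally Morita-Takeuchi equivalent; combined with $R\#H/(J)\cong R/(I)\#H$ from the first part, this is the assertion of the second bullet.

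The real work lies in the first paragraph, and within it the only slightly delicate point is showing that the ideal generated by $J=I\#H$ in the larger algebra $R\#H$ is no larger than $(I)\#H$; this is where the compatibility of the $H$-action on $R$ with the multiplication of $R$ enters. Everything else is formal: the conjugacy of $J$ and $J_f$ is immediate from the definition of $\Theta$, and the monoidal Morita-Takeuchi equivalence is precisely Theorem~\ref{po}, so I do not expect a genuine obstacle---the substance has been absorbed into Proposition~\ref{invariant} and Theorem~\ref{po}.
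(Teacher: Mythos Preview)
Your proof is correct and follows exactly the approach the paper intends: the Corollary is stated there without proof, as an immediate consequence of Proposition~\ref{invariant} and Theorem~\ref{po}, and you have simply spelled out the application --- taking $A'=K\#H\subseteq A=R\#H$, the conjugating functional $F=\res^{-1}(f)\in\Alg(K\#H,k)$, and noting $J_f=\rho_F(J)$ so that Theorem~\ref{po} applies under the stated nonvanishing hypothesis. The auxiliary verifications you supply (that $I\#H$ is a Hopf ideal and that $(J)=(I)\#H$ in $R\#H$) are the routine bosonization facts the paper takes for granted.
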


\begin{Proposition} Let $K$ be a (braided) Hopf subalgebra of the
(braided) Hopf algebra $R$ such that $R$ is
left or right faithfully flat over $K$, and let $B=R/RK^+R$,
where $RK^+R$ is the Hopf ideal of $R$ generated by $K^+$. Then:
\begin{enumerate}
\item $B$ is a (braided) Hopf algebra and
$$\begin{CD}
K @>\kappa >> R \\
@V\ep VV  @V\pi VV \\
k @>\iota >> B
\end{CD}$$
is a pushout diagram of (braided) Hopf algebras.
\item $R^{coB}\cong K \cong {^{coB}R}$.
\item The commutative square
$$\begin{CD}
K @>
\kappa >> R \\
@V\ep VV  @V\pi VV \\
k @>\iota >> B
\end{CD}$$
is a pullback of (braided) Hopf algebras.
\end{enumerate}
\end{Proposition}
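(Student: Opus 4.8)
The plan is to establish the three assertions in sequence, since each feeds into the next, and to treat the braided and ordinary cases uniformly — the braided case is the one that matters, but all the diagram-chases are formally the same once one works inside the braided category $\mathcal{V}_c$ (here $\YD^H_H$), using the faithful flatness hypothesis in place of the freeness one gets automatically over a field.

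First I would set up $B=R/RK^+R$ as a quotient braided Hopf algebra: $RK^+R$ is a coideal because $K^+=\ker(\ep|_K)$ satisfies $\Delta(K^+)\subseteq K^+\ot K+K\ot K^+$, and it is closed under the antipode, so $B$ inherits a braided Hopf algebra structure making $\pi\colon R\to B$ a morphism. The square in (1) commutes trivially since $\pi\kappa$ kills $K^+$. To see it is a pushout in braided Hopf algebras, I would check the universal property directly: given a braided Hopf algebra $C$ with maps $R\to C$ and $k\to C$ agreeing on $K$, the map $R\to C$ must kill $K^+$, hence $RK^+R$ (since the ideal generated by a coideal that is killed is again killed by an algebra map), hence factors uniquely through $B$; uniqueness forces compatibility with the coalgebra structure because $\pi$ is an epimorphism of coalgebras. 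This step is essentially formal.

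The substantive part is (2): $R^{coB}\cong K$ (and the left-handed version). Here I would invoke the faithful flatness of $R$ over $K$, which is exactly the hypothesis designed to make the theory of Hopf–Galois extensions apply. By a theorem of Schneider (or Masuoka–Takeuchi in the braided setting), when $R$ is faithfully flat over a Hopf subalgebra $K$, the extension $R^{coB}\subseteq R$ with $B=R/RK^+R$ is $B$-Galois and $R^{coB}=K$; equivalently, the canonical surjection $R/RK^+R$ is the "quotient coalgebra" $R/K$ in the strong sense, and $K$ is recovered as the coinvariants. Concretely: one always has $K\subseteq R^{coB}$, and faithful flatness upgrades this to equality by a descent/equalizer argument — the equalizer of $R\rightrightarrows R\ot B$ is computed after the faithfully flat base change along $K\to R$, where it becomes the equalizer defining $K$ itself. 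The left-coinvariant statement ${}^{coB}R\cong K$ follows by the symmetric argument using right faithful flatness, or by applying the antipode.

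Finally, for (3), that the same square is a pullback: a morphism into the pullback is a pair $(r\in R,\ast\in k)$ with $\pi(r)=\io(\ast)$, i.e. an element $r\in R$ with $\pi(r)\in k\cdot 1_B$; modulo the unit this says $\pi(r-\ep(r)1)=0$, i.e. $r\in k1+\ker\pi$. But $\ker\pi=RK^+R$, and the content is that $(k1+RK^+R)\cap(\text{the part where coinvariance is tested})$ collapses to $K$ — more precisely, the pullback $R\times_B k$ as a coalgebra is $\{r: \Delta_B\pi(r)=\pi(r)\ot 1\}$, which one identifies with $R^{coB}=K$ by part (2). So (3) is a corollary of (2) together with the observation that the pullback of $\pi$ along $\io\colon k\to B$ in the category of braided Hopf algebras is computed as $\{r\in R: \pi(r)\in k1_B\}$, and that this set is precisely $R^{coB}$ because $B$ is connected-graded-like over $k$ in the relevant sense (the counit is the only grouplike used).

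The main obstacle I anticipate is part (2): verifying that faithful flatness genuinely forces $R^{coB}=K$ rather than something larger requires either citing the appropriate Hopf–Galois/descent theorem in the braided category (Masuoka–Takeuchi) or reproving the equalizer-after-faithfully-flat-descent argument by hand, and one must be careful that all the maps involved ($\Delta$, the coaction $(\pi\ot 1)\Delta$, the braiding $c$) are morphisms in $\YD^H_H$ so that the cotensor products and equalizers are taken in the right category. Everything else — the pushout in (1), the pullback in (3) given (2) — is a routine diagram chase.
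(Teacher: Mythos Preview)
Your treatment of (1) and (2) is fine in outline and matches the paper's strategy, though the paper carries out (2) by hand rather than citing: it first proves directly, via a faithful-flatness descent argument, that $K$ is the equalizer of $R\rightrightarrows R\ot_K R$ (the two maps being $r\mapsto r\ot 1$ and $r\mapsto 1\ot r$), then writes down the Galois map $\beta\colon R\ot_K R\to R\ot B$, $\beta(x\ot y)=xy_1\ot\pi(y_2)$, with explicit inverse $\beta^{-1}(x\ot\pi(y))=xs(y_1)\ot y_2$, and reads off $K\cong R^{coB}$ from the resulting commutative diagram of equalizers. Your plan to cite Schneider or Masuoka--Takeuchi would work, but the paper's explicit construction avoids any worry about whether the cited result is stated in the braided category.

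Part (3), however, has a genuine error. You compute the pullback as the set $\{r\in R:\pi(r)\in k1_B\}=k1+RK^+R$ and then assert this coincides with $R^{coB}$. It does not: $k1+RK^+R$ is typically much larger than $K$. The condition $\Delta_B\pi(r)=\pi(r)\ot 1$ you wrote down is equivalent to $\pi(r)\in k1_B$, not to $r\in R^{coB}$; the latter is $(1\ot\pi)\Delta_R(r)=r\ot 1$, which involves the comultiplication of $R$, not of $B$. These are different equalizers.

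The correct argument, which the paper gives, works at the level of test objects rather than elements: one must verify the universal property for \emph{Hopf algebra} maps. Given a Hopf algebra map $\alpha\colon X\to R$ with $\pi\alpha=\iota_B\ep_X$, the fact that $\alpha$ is a \emph{coalgebra} map yields
\[
(1\ot\pi)\Delta_R\,\alpha=(\alpha\ot\pi\alpha)\Delta_X=(\alpha\ot\iota_B\ep_X)\Delta_X=(1\ot\iota_B\ep_R)\Delta_R\,\alpha,
\]
so $\alpha$ equalizes the two maps $R\rightrightarrows R\ot B$ defining $R^{coB}$, and hence factors uniquely through $K$ by (2). The step you are missing is precisely this use of the coalgebra-map property of $\alpha$; without it there is no reason for the image to land in the coinvariants.
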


\begin{proof} Since $K^+=\ker (\ep )$ is a Hopf ideal in $K$ it
follows that $RK^+R$ is a Hopf ideal in $R$ and hence that
$B=R/RK^+R$ is a (braided) Hopf algebra quotient of $R$.

First show that
$$K {\buildrel\kappa\over\longrightarrow} R
{{\buildrel f\over\longrightarrow}
\atop{\buildrel g\over\longrightarrow}} R\ot_KR,$$
where $f=(1\ot\iota\ep )\Delta$, $f(r)=r\ot 1$, and
$g=(\iota\ep\ot 1)\Delta$, $g(r)= r\ot 1$, is exact, i.e: an
equalizer diagram by faithful flatness (\cite{Wa}, Theorem 13.1).
Observe that both $f$ and $g$ are injective maps. Consider the
diagram
$$M {\buildrel w\over\longrightarrow} M\ot_KR
{{\buildrel u\over\longrightarrow}
\atop{\buildrel v\over\longrightarrow}} M\ot_KR\ot_KR$$
with $w=1\ot\iota$, $u=1\ot f$ and $v=1\ot g$. Then $w(m)=m\ot 1$,
$u(m\ot r)=m\ot r\ot 1$, $v(m\ot r)=m\ot 1\ot r$ and $uw=vw$. If
$N=\eq (u,v)$ then $M$ is contained in $N$ and the diagram
$$N\ot_KR \to M\ot_KR\ot_KR {{\buildrel {u\ot 1}\over\longrightarrow}
\atop{\buildrel {v\ot 1}\over\longrightarrow}} M\ot_K\ot_KR\ot_KR\ot_KR$$
is exact by the flatness of $R$ over $K$. Define $s\colon
M\ot_K\ot_KR\ot_KR\ot_KR\to M\ot_KR\ot_KR$ by $s=1\ot 1\ot m$,
$s(m\ot r\ot r'\ot r'')=m\ot r\ot r'r''$. If $x=\sum m_i\ot r_i\ot
r'_i\in \eq (u\ot 1,v\ot 1)=N\ot_KR$ then
$$\sum m_i\ot r_i\ot 1\ot r'_i=(u\ot 1)x =(v\ot 1)x
=\sum m_i\ot 1\ot r_i\ot r'_i $$ and hence
$$x=s(u\ot 1)x=s(v\ot 1)=\sum m_i\ot 1\ot r_ir'_i=
(w\ot 1)(\sum m_i\ot r_ir'_i)\in\im (M\ot_KR)$$
which shows that $M\ot_KP=N\ot_KR=\eq (u\ot 1,v\ot 1)$ and
$N/M\ot_KR\cong n\ot_KR/M\ot_KR=0$. This implies that $N/M=0$ by
faithful flatness of $R$ over $K$, and hence that $M=N$. In
particular, if $M=K$ this gives
$$K=\eq (u,v)=\eq (f,g).$$
Now if $\pi\colon R\to B=R/RK^+R$ is the canonical projection then
$$R^{coB}\to R {{\buildrel\phi\over\longrightarrow}
\atop{\buildrel\psi\over\longrightarrow}} R\ot B,$$
where $\phi =(1\ot\iota_B\ep_R)\Delta_R$, $\phi (x)=x\ot 1$, and
$\psi =(1\ot\pi )\Delta_R$, $\psi (x)=x_1\ot\pi (x_2)$, is an
exact equalizer diagram by definition. The Galois map $\beta
=(m_R\ot\pi )(1\ot\Delta_R)\colon R\ot_KR\to R\ot B$, defined by
$\beta (x\ot y)=xy_1\ot\pi (y_2)$, has inverse $\beta^{-1}\colon
R\ot B\to R\ot_KR$ given by $\beta^{-1}(1\ot p)=(m_R\ot 1)(1\ot
s\ot 1)(1\ot\Delta_R)$, $\beta^{-1}(x\ot\pi (y))=xs(y_1)\ot y_2$.
The diagram
$$\begin{CD}
K @>>> R @>f-g>> R\ot_KR \\
@.    @|   @V\beta VV   \\
R^{coB} @>>> R @>\phi -\psi>>  R\ot B \\
\end{CD}$$
commutes, so that $K\cong R^{coB}$. With the same argument, but on
the left, one gets $K\cong {^{coB}R}$.

For any commutative diagram of (braided) Hopf algebras
$$\begin{CD}
X @>\alpha >> R \\
@V\ep_X VV @V\pi VV \\
k @>\iota_B >> B
\end{CD}$$
which is commutative, $\pi\alpha =\iota_B\ep_X$, we get
$$\phi\alpha =(1\ot\iota_B\ep_R)\Delta_R\alpha =
(\alpha \ot\iota_B\ep_X)\Delta_X =(1\ot\pi )\Delta_R\alpha
=\Psi\alpha .$$
Thus, since
$$K\to R {{\buildrel\phi\over\longrightarrow}
\atop{\buildrel\psi\over\longrightarrow}} R\ot B$$
is an equalizer diagram, there is a unique Hopf algebra map
$\gamma\colon X\to K$ such that $\kappa\gamma =\alpha$ and
$\ep_K\gamma =\ep_X$, which shows that the diagram in item (3) is
a pullback of (braided) Hopf algebras.
\end{proof}

Observe that a Hopf algebra with cocommutative coradical is
faithfully flat over any of its Hopf subalgebra \cite{Ta}.

\subsection{An exact 5-term sequence} Let $ad_l\colon R\ot R\to R$
and $ad_r\colon R\ot R\to R$ be the left and the
right adjoint actions, that is $ad_l(r\ot r')=r_1a's(r_2)$ and
$ad_r(r'\ot r)=s(r_1)r'r_2$, respectively. A Hopf subalgebra $K$
of the Hopf algebra $R$ is said to be normal if it is stable under
the left and the right adjoint action of $R$.

\begin{Lemma} Let $K$ be a Hopf subalgebra of $R$.
\begin{enumerate}
\item If $K$ is normal in $R$, then $RK^+=K^+R$ is a
Hopf ideal of $R$, $R/RK^+\cong R\ot_Kk$ is a Hopf algebra,
$\pi\colon R\to R/RK^+$ is Hopf algebra map and
$$\begin{CD}
K @>\kappa >> R \\
@V\ep VV  @V\pi VV \\
k @>\iota >> R/RK^+
\end{CD}$$
is a pushout of Hopf algebras.
\item If $\pi R\to R'$ is a Hopf algebra map then
$$R^{coR'}=\eq (\Delta (1\ot\iota\ep ),
\Delta (1\ot\pi )\colon R\to R\ot R'$$
is stable under the left and the right adjoint action and
$$\begin{CD}
R^{coR'} @>\kappa >> R \\
@V\ep VV @V\pi VV \\
k @>\iota >> R'
\end{CD}$$
is a pullback.
\end{enumerate}
\end{Lemma}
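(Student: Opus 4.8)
The statement has two parts, and the natural strategy is to reduce each to the corresponding statement in the preceding Proposition on pushouts of (braided) Hopf algebras. For part (1), the hypothesis that $K$ is normal in $R$ — stable under both $ad_l$ and $ad_r$ — is precisely what is needed to see that the left ideal $RK^+$ and the right ideal $K^+R$ coincide. First I would verify $RK^+ = K^+R$ directly: for $k\in K^+$ and $r\in R$ one has $rk = r_1 k s(r_2) r_3 = ad_l(r_1\ot k)\, r_2$, and since $K$ is $ad_l$-stable and $\ep(k)=0$ the element $ad_l(r_1\ot k)$ lies in $K^+$, so $rk\in K^+R$; the reverse inclusion is symmetric using $ad_r$. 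This common ideal is then a two-sided ideal, and being generated by $K^+$ it is a coideal, hence a Hopf ideal once one checks stability under the antipode (again immediate from normality). Thus $R/RK^+$ is a quotient Hopf algebra, and the identification $R/RK^+\cong R\ot_K k$ is the standard one. Finally, to get the pushout square I would simply observe that $K$ is faithfully flat inside $R$ under the running hypotheses (the paper has already noted that a Hopf algebra with cocommutative coradical is faithfully flat over any Hopf subalgebra, and in the braided graded setting this holds as well), so the previous Proposition applies verbatim with $B = R/RK^+$; the only extra point is that here $RK^+R = RK^+ = K^+R$ because of normality, which is exactly what makes the general pushout square specialize to the one displayed.

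For part (2), given a Hopf algebra surjection (or just a map) $\pi\colon R\to R'$, the object $R^{coR'} = \eq\bigl((1\ot\iota\ep)\Delta,\ (1\ot\pi)\Delta\colon R\to R\ot R'\bigr)$ is the left coinvariants. I would first check it is a subalgebra — this is routine from $\pi$ being an algebra map — and then show it is $ad_l$- and $ad_r$-stable. For $ad_l$-stability: if $x\in R^{coR'}$, so $x_1\ot\pi(x_2) = x\ot 1$, then for any $r\in R$ one computes $\Delta(ad_l(r\ot x)) = r_1 x_1 s(r_4)\ot r_2 x_2 s(r_3)$, and applying $1\ot\pi$ to the second leg and using $x_1\ot\pi(x_2)=x\ot 1$ together with $\pi$ multiplicative collapses $\pi(r_2 x_2 s(r_3)) = \pi(r_2)\pi(x_2)\pi(s(r_3)) = \pi(r_2 s(r_3))\cdot(\text{something landing in }\pi(x_2))$; after the dust settles the second tensor factor becomes $1$, so $ad_l(r\ot x)\in R^{coR'}$. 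The $ad_r$ case is symmetric. The pullback assertion then follows from the pullback half of the previous Proposition (or can be proved directly by the same equalizer argument): given any commutative square with apex $X$ mapping to $R$ and to $k$ compatibly with $\pi$, the map $X\to R$ has image landing in the equalizer $R^{coR'}$ by the universal property of the equalizer, and uniqueness is automatic.

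The part I expect to require the most care is the stability of $R^{coR'}$ under the adjoint actions in part (2): the Sweedler-notation computation has four factors of $\De r$ and the coinvariance relation $x_1\ot\pi(x_2)=x\ot 1$ has to be inserted at the right spot, using both the antipode axiom for the $r$-terms and multiplicativity of $\pi$. In the braided setting one must also keep track of the braiding $c$ that appears in $\De$ for a braided Hopf algebra, so the bookkeeping is genuinely the crux; everything else — the ideal identifications in part (1) and the two universal-property arguments — is formal once faithful flatness is in hand.
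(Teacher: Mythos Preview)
Your direct computations---the identities $rk = ad_l(r_1\ot k)\,r_2$ and $kr = r_1\, ad_r(k\ot r_2)$ giving $RK^+ = K^+R$, and the Sweedler calculation for adjoint stability of $R^{coR'}$---are exactly what the paper does.

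The gap is in your plan for the pushout square in part~(1). You propose to invoke the preceding Proposition, but that Proposition assumes $R$ is left or right faithfully flat over $K$, a hypothesis \emph{not} present in this Lemma; your appeal to ``running hypotheses'' or to the cocommutative-coradical remark is unwarranted here, since the Lemma is stated for an arbitrary normal Hopf subalgebra. In fact the pushout claim is the easy half and needs no flatness at all: the paper simply observes that $R\ot K \rightrightarrows R \to R/RK^+$ (with parallel arrows $m(1\ot\kappa)$ and $\ep\ot 1$) is a coequalizer, and checks directly that any Hopf algebra map $u\colon R\to X$ with $u\kappa=\iota\ep$ satisfies $u\,m(1\ot\kappa)=m_X(u\ot u\kappa)=m_X(u\ot\iota\ep)=u(\ep\ot 1)$, hence coequalizes the pair and factors uniquely through $R/RK^+$. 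Faithful flatness in the earlier Proposition was needed for the \emph{converse} direction---recovering $K$ as $R^{coB}$ and getting the pullback square from the pushout data---not for the pushout universal property itself. For part~(2) your parenthetical direct equalizer argument is precisely the paper's route, so there you are fine once you drop the unnecessary detour through the Proposition.
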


\begin{proof} If $x\in R$ and $y\in K^+$ then $xy=ad_l(x_1\ot y)x_2$ and
$yx=x_1ad_r(y\ot x_2)$. Since $K$ is normal in $R$ it follows that
$RK^+=K^+R$ and $I=RK^+$ is an ideal. It is a Hopf ideal, since
$RK^+$ is always a coideal and since $s(I)=I$. Observe that
$$R\ot K {{\buildrel{m(1\ot\kappa )}\over\longrightarrow}
\atop{\buildrel {\ep\ot 1}\over\longrightarrow}}
R \buildrel \pi\over\longrightarrow R/RK^+$$
is a coequalizer. If $u\colon R\to X$ is a Hopf algebra map such
that $u\kappa =\iota\ep $, then $um_R(1\ot\kappa )=m_X(u\ot
u)(\kappa\ot 1)=m_X(\iota\ep\ot u)\ep\ot u$. Thus, there is a
unique Hopf algebra map $u'\colon R/RK^+\to X$ such that $u'\pi
=u$ and $u'\iota =\iota_X$, showing that the diagram in (1) is a
pushout.

If $y\in R^{coR'}$ then $ad_l(x\ot y)=x_1ys(x_2)$ and $(1\ot\pi
)\Delta ad_l(x\ot y)=x_1y_1s(x_4)\ot\pi
(x_2y_2s(x_3))=x_1ys(x_2)\ot 1=(1\ot\ep )\Delta ad_l(x\ot y)$ and
similarly for $ad_r(y\ot x)$. The diagram obviously commutes. If
$v\colon Z\to A$ is a Hopf algebra map such that $\pi
v=\iota_{R'}\ep_Z$ then $(1\ot\pi )\Delta v=(v\ot\pi v)\Delta
=(v\ot\iota_{R'}\ep_Z)\Delta = (1\ot\iota\ep )(v\ot v)\Delta
=(1\ot\iota\ep )\Delta v$. Hence there is a unique Hopf algebra
map $v'\colon Z\to R^{coR'}$ such that $\kappa v'=\ep v'$, and the
diagram in (2) is a pullback.
\end{proof}

\begin{Proposition} Let $K$ be a (braided) Hopf subalgebra of $R$
such that $R$ is left or right faithfully flat over $K$,
and such that $RK^+=K^+R$. If $B=R/RK^+$. Then:
\begin{enumerate}
\item $R^{coB}=K= {^{coB}R}$.
\item $K$ is a normal Hopf subalgebra of $R$.
\item There are spectral sequences
$$H^p(B,H^q(K,Y))^{p\atop{\Longrightarrow}}H^n(R,Y),$$
$$H_p(B,H_q(K,X))^{p\atop{\Longrightarrow}}H_n(R,X).$$
\item Exact sequences in low degrees
$$0\to H^1(B,Y)\to H^1(R,Y)\to Hom_B(K^+\ot_Kk,Y)\to H^2(B,Y)\to H^2(R,Y),$$
$$H_2(R,X)\to H_2(B,X)\to (K^+\ot_Kk)\ot_BX\to H_1(R,X)\to H_1(B,X)\to 0.$$
\end{enumerate}
\end{Proposition}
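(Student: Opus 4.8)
The plan is to derive (1) and (2) from the structural results already established, to obtain (3) as a Grothendieck spectral sequence for a composite of functors, and to read off (4) as the attached five-term exact sequences of low degree. For (1): since $RK^+=K^+R$ we get $RK^+R=R(K^+R)=R(RK^+)=RK^+$, so $B=R/RK^+$ is exactly the (braided) Hopf algebra quotient appearing in the earlier Proposition on pushouts of (braided) Hopf algebras, whose part (2) --- which uses precisely the faithful flatness of $R$ over $K$ --- gives $R^{coB}\cong K\cong {}^{coB}R$. For (2): combine this with the earlier Lemma, according to which $R^{coR'}$ is stable under the left and the right adjoint action of $R$ for every Hopf algebra surjection $R\to R'$; taking $R'=B$ and using $R^{coB}=K$ shows that $K$ is a normal Hopf subalgebra.

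For (3) I would invoke Grothendieck's spectral sequence of a composite of functors. In cohomology, take the invariants functor $(-)^K=\Hom_K(k,-)$ from $R$-modules to $B$-modules --- well defined because $K$ is normal, so the ideal $RK^+=K^+R$ annihilates $K$-invariants --- which is left exact with right-derived functors $H^q(K,-)$, followed by $(-)^B=\Hom_B(k,-)$ with right-derived functors $H^p(B,-)$; their composite is $(-)^R$ since $K^+\subseteq R^+$. It remains to verify that $(-)^K$ carries injective $R$-modules to $(-)^B$-acyclic $B$-modules. Flatness of $R$ over $K$ makes $R\otimes_K-$ exact, so its right adjoint --- restriction from $R$-modules to $K$-modules --- preserves injectives, giving $H^q(K,I)=0$ for $q>0$ when $I$ is $R$-injective; and for any $B$-module $N$, regarded as an $R$-module through $R\to B$ so that $K$ acts trivially, there is a natural identification $\Hom_B(N,I^K)=\Hom_R(N,I)$, whence $\Hom_B(-,I^K)$ is exact and $I^K$ is $B$-injective. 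Grothendieck's theorem then yields $H^p(B,H^q(K,Y))\Rightarrow H^{p+q}(R,Y)$. The homological assertion follows by the dual argument: $(-)_K=k\otimes_K-$ from $R$-modules to $B$-modules is right exact with left-derived functors $H_q(K,-)$, its composite with $(-)_B=k\otimes_B-$ is $(-)_R$ because $k\otimes_K R=R/K^+R=R/RK^+=B$, and a free $R$-module is sent to a free --- hence flat, hence $(-)_B$-acyclic --- $B$-module, so $H_p(B,H_q(K,X))\Rightarrow H_{p+q}(R,X)$.

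Finally, (4) is the standard five-term exact sequence of low-degree terms of a first-quadrant spectral sequence, once the edge groups are identified. For coefficients $Y$ that are a $B$-module (equivalently trivial over $K$) we have $H^0(K,Y)=Y$, hence $E_2^{p,0}=H^p(B,Y)$; and $H^1(K,Y)=\Der(K,Y)=\Hom_k(K^+\otimes_K k,Y)$ --- a derivation vanishes on $(K^+)^2$, and $K^+\otimes_K k=K^+/(K^+)^2$ --- whose $B$-invariants, $B$ acting by the adjoint action on $K^+\otimes_K k$ and trivially on $Y$, are $\Hom_B(K^+\otimes_K k,Y)$, so $E_2^{0,1}=\Hom_B(K^+\otimes_K k,Y)$; the exact sequence $0\to E_2^{1,0}\to H^1(R,Y)\to E_2^{0,1}\to E_2^{2,0}\to H^2(R,Y)$ is then the first displayed sequence. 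Dually, $H_1(K,X)=\Tor_1^K(k,X)$ with $H_1(K,X)_B\cong (K^+\otimes_K k)\otimes_B X$, so the homological five-term sequence $H_2(R,X)\to E^2_{2,0}\to E^2_{0,1}\to H_1(R,X)\to E^2_{1,0}\to 0$ becomes the second. The main obstacle is the acyclicity condition for the cohomological spectral sequence --- that $Y^K$ is $(-)^B$-acyclic over $B$ whenever $Y$ is $R$-injective --- which is exactly the place where flatness of $R$ over $K$ is used; the homological side, and the $B$-module structure on $H^*(K,-)$ supplied by the normality of $K$, are routine.
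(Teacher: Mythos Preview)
Your argument is correct and follows the same route as the paper: items (1)--(2) via the faithfully-flat descent already established earlier, and items (3)--(4) via the Hochschild--Serre type spectral sequence for the normal sub-Hopf-algebra $K\subset R$ together with the identification of the $E_2^{0,1}$ and $E^2_{0,1}$ terms.

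The only difference is one of presentation. The paper simply cites Montgomery for (1)--(2) and Cartan--Eilenberg (Chap.~XVI, Theorem~6.1 and the low-degree sequences on p.~329) for (3)--(4), whereas you supply the details: for (1)--(2) you invoke the paper's own earlier Proposition and Lemma rather than an external reference, and for (3) you verify the Grothendieck hypotheses directly, in particular the acyclicity step using that $R\otimes_K-$ is exact so that restriction preserves injectives. Your version is thus more self-contained; the paper's is more concise but relies on the reader consulting the references. Mathematically the two are the same argument.
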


\begin{proof} For items (1) 1nd (2) see \cite{Mo}, Proposition 3.4.3. The spectral sequences are special cases of those for normal subalgebras
\cite{CE}, Chap. XVI, Theorem 6.1, where it actually suffices to
assume that $R$ is flat as a $K$-module. The natural isomorphisms
$H^p(R,Y)\cong\Ext^p_R(k,Y)$ and $H_p(R,X)\cong \Tor^R_1(k,X)$,
valid for any Hopf algebra $R$, have been used. Finally, the low
degree 5-term sequences of these spectral sequences are those of
\cite{CE}, page 329, cases C and C', where the isomorphisms
$$E^{01}_2=\Hom_B(k,H^1(K,Y))\cong Hom_B(k,\Hom_K(K^+,Y))\cong \Hom_B(K^+\ot_Kk,Y)$$
and
$$E^2_{01}= H_1(K,k)\ot_BX\cong (K^+\ot_Kk)\ot_BX$$
have been used.
\end{proof}

The 5-term exact sequences can also be found directly without the
use of spectral sequences. The exact sequence of $K$-modules $0\to
K^+\to K\to k\to 0$ induces an exact sequence of $R$-modules $0\to
K^+\ot_KR\to K\ot_KR\to k\ot_KR\to 0$, that is $K^+\ot_KR=K^+R$
and $k\ot_KR=R/K^+R$, since $R$ is $K$-flat. Again, since $R$ is
$K$-flat, any $R$-projective (or $R$-flat) resolution $\mathbf X$
of an $R$-module $M$ is also a $K$-flat resolution of $M$. For
every injective $K$-module map $f\colon Y\to Y'$ the $R$-module
map $1\ot f:R\ot_KY\to R\ot_KY'$ is injective, since $R$ is
$K$-flat, and thus $f\ot_K1:Y\ot_KX\cong (Y\ot_RR)\ot_KX\to
(Y'\ot_RR)\ot_KX\cong Y'\ot_KX$ is injective as well for every
flat $R$-module $X$. This gives an isomorphism of complexes
$$k\ot_K\mathbf X\cong k\ot_KR\ot_R\mathbf X\cong B\ot_R\mathbf X,$$
so that $H_n(K,M)=\Tor_n^K(k,M)\cong\Tor_n^R(B,M)$. From the exact
sequence of $R$-modules $0\to R^+\to R\to k\to 0$ we then get a
commutative diagram of $B$-modules with exact rows {\small
$$\begin{CD}
0 @>>> \Tor_1^R(B,k) @>>> B\ot_RR^+ @>>> B\ot_RR @>>> B\ot_Rk @>>> 0 \\
@. @V\cong VV @| @V\cong VV @V\cong VV @. \\
0 @>>> H_1(K,k) @>>> B\ot_RR^+ @>>> B @>>> k @>>> 0
\end{CD}$$}
and hence an exact sequence of $B$-modules
$$0\to K^+/(K^+)^2\to B\ot_RR^+\to B^+\to 0.$$
Apply the functor $\Hom_B(\ ,M)$ to this last exact sequence to get
\begin{eqnarray*}
0&\to& \Hom_B(B^+,M)\to \Hom_B(B\ot_RR^+,M)\to\Hom_B(H_1(K,k),M) \\
&\to&\Ext_B^1(B^+,M)\to \Ext_B^1(B\ot_RR^+,M)
\end{eqnarray*}
If the bottom sequence of the commutative diagram
$$\begin{CD}
0 @>>> M @>>> P @>\tilde p >> R^+ @>>> 0 \\
@.  @|  @V\beta VV  @V\alpha VV @. \\
0 @>>> M @>>> Y @>p >> B\ot_RR^+ @>>> 0
\end{CD}$$
is an extension $E$ of $B$-modules then it is also an extension of
$R$-modules. The map $\alpha\colon R^+\to B\ot_RR^+$, given by
$\alpha (x)=1\ot_Rx$, is an $R$-module homomorphism. The right
hand square of the diagram is a pullback and the top sequence is a
representative of $\alpha^*(E)$. If the top sequence is split by
$\tilde u\colon R^+\to P$ then $u\colon B\ot_RR^+\to Y$, given by
$u(b\ot x)=b\beta\tilde u(x)$, splits the bottom sequence, since
$pu(b\ot x)=bp\beta u(x)=b\alpha\tilde pu(x)=b\alpha (x)=b\ot x$.
Hence the induced map $\alpha^*\colon
\Ext^1_B(B\ot_RR^+,M)\to\Ext^1_R(R^+,M)$ is injective. Moreover,
$\Ext^1_R(R^+,M)\cong H^2(R,M)$.

Using $\Hom_B(B\ot_RR^+,M)\cong \Hom_R(R^+,M)$  one gets the
commutative diagram with exact rows and columns
$$\begin{CD}
 @. \Hom_B(B,M) @= \Hom_R(R,M) @.   \\
@.  @Vu^*VV  @Vv^*VV @. \\
0 @>>> \Hom_B(B^+,M) @>\kappa^*>> \Hom_R(R^+,M) @>\delta >> \Hom_B(H_1(K,k),M) \\
@.  @VVV  @V\del VV @. \\
0 @>>> \Ext^1_B(k,M) @>>> \Ext^1_R(k,M) @.\\
@. @VVV  @VVV @. \\
 @. 0 @. 0 @.
\end{CD}$$
in which $\delta v^*=\delta\kappa^*u^*=0$, so that there is a
unique homomorphism \begin{eqnarray*}\gamma\colon \Ext^1_R(k,M)\to
\Hom_B(H_1(K,k),M) \end{eqnarray*} such that $\gamma\del =\delta$.
It follows that the sequence
$$0\to H^1(B,M)\to H^1(R,M)^{{\gamma}\atop{\longrightarrow}}
\Hom_B(H_1(K,k),M)\to H^2(B,M)\to H^2(R,M)$$
is exact. Similar argument work for the homology sequence.

\section{Liftings over finite abelian groups}\label{s3}

In this section we give a somewhat different characterization of
the class of finite dimensional pointed Hopf algebras classified
in \cite{AS}, and show that any two such Hopf algebras with
isomorphic associated graded Hopf algebras are monoidally
Morita-Takeuchi equivalent, and therefore cocycle deformations of
each other, as we will point out in the next section.

A datum of finite Cartan type
$$\cal D =\cal D \left(G, (g_i)_{1\le i\le \theta},
(a_{ij})_{1\le i,j\le\Theta}\right)$$ for a (finite) abelian group
$G$ consists of elements $g_i\in G$, $\chi_j\in \widehat{G}$ and a
Cartan matrix $(a_{ij})$ of finite type satisfying the Cartan
condition
$$q_{ij}q_{ji}=q_{ii}^{a_{ij}}$$
with $q_{ii}\ne 1$, where $q_{ij}=\chi_j(g_i)$, in particular
$q_{ii}^{a_{ij}}=q_{jj}^{a_{ji}}$ for all $1\le i,j\le \theta$. In
general, the matrix $(q_{ij}$ of a diagram of Cartan type is not
symmetric, but by \cite[Lemma 1.2]{AS} it can be reduced to the
symmetric case by twisting.

Let $\mathbf Z [I]$ be the free abelian group of rank $\theta$
with basis $\set{\alpha_1, \alpha_2,\ldots ,\alpha_{\theta}}$. The
Weyl group $W\subset \Aut (\mathbf Z[I])$ of $(a_{ij})$ is
generated by the reflections $s_i\colon \mathbf Z[I]\to \mathbf
Z[I]$, where $s_i(\alpha_j)=\alpha_j -a_{ij}\alpha_i$ for all
$i,j$. The root system of the Cartan matrix $(a_{ij})$ is $\Phi
=\cup_{i=1}^{\theta}W(\alpha_i)$ and $\Phi^+ =\Phi\cap\mathbf
[I]=\setst{\alpha\in\Phi}{\alpha =\sum _{i=1}^{\theta}n_i\alpha_i
, n_i\ge 0}$ is the set of positive roots relative to the basis of
simple roots $\set{\alpha_1 ,\alpha_2 ,\ldots ,\alpha_{\theta}}$.
Obviously, the number of positive roots $p=|\Phi^+|$ is at least
$\theta$. The maps $g\colon \mathbf Z[I]\to G$ and $\chi\colon
\mathbf Z[I]\to\tilde G$ given by
$g_{\alpha}=g_1^{n_1}g_2^{n_2}\ldots g_{\theta}^{n_{\theta}}$ and
$\chi_{\alpha}=\chi^{n_1}\chi_2^{n_2}\ldots\chi_{\theta}^{n_{\theta}}$
for $\alpha =\sum_{i=1}^{\theta}n_i\alpha_i$, respectively, are
group homomorphisms. The bilinear map $q\colon \mathbf Z
[I]\times\mathbf Z [I]\to k^x$ defined by
$q_{\alpha_i\alpha_j}=q_{ij}$ can be expressed as
$q_{\alpha\beta}=\chi_{\beta}(g_{\alpha})$.

If $\cal X$ the set of connected components of the Dynkin diagram
of $\Phi$ let $\Phi_J$ be the root system of the component
$J\in\cal X$. The partition of the Dynkin diagram into connected
components corresponds to an equivalence relation on $I=\set{
1,2,\ldots ,\theta}$, where $i\sim j$ if $\alpha_i$ and $\alpha_j$
are in the same connected component.

\begin{Lemma} \cite[Lemma 2.3]{AS} Suppose that $\mathcal D$ is a
connected datum of finite Cartan type, i.e: the
Dynkin diagram of the Cartan matrix $(a_{ij})$ is connected, and such that
\begin{enumerate}
\item $q_{ii}$ has odd order, and
\item the order of $q_{ii}$ is prime to 3, if $(a_{ij})$ is of type $G_2$.
\end{enumerate}
Then there are integers $d_i\in\set{1,2,3}$ for $1\le i\le\theta$
and a $q\in k^x$ of odd order $N$ such that
$$q_{ii}=q^{2d_i} \quad . \quad d_ia_{ij}=d_ja_{ji}$$
for $1\le i,j\le\Theta$. If the Cartan matrix $(a_{ij})$ of
$\mathcal D$ is of type $G_2$ then the order of $q$ is prime to 3.
In particular, the $q_{ii}$ all have the same order in
$k^{\times}$, namely $N$.
\end{Lemma}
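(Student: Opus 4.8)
The plan is to split the argument into a combinatorial part, which extracts the $d_i$ from the Cartan matrix, and a computational part in the group $\mu_N$ of $N$-th roots of unity in $k$, which produces $q$. A connected Cartan matrix of finite type is symmetrizable: there are positive integers $d_1,\dots ,d_\theta$, unique if taken with $\gcd =1$, such that $d_ia_{ij}=d_ja_{ji}$ for all $i,j$. Running through the list of finite-type Dynkin diagrams (the $d_i$ being the relative squared lengths of the simple roots) one checks case by case that $d_i\in\set{1,2,3}$; that the extreme value $d_i=3$, equivalently a triple bond $a_{ij}a_{ji}=3$, occurs only in type $G_2$; and that some index $i_0$ has $d_{i_0}=1$. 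This already supplies the integers $d_i$ with the required symmetry $d_ia_{ij}=d_ja_{ji}$, so it remains to build $q$.

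Next I would prove that all the $q_{ii}$ share one common order $N$, which is odd and, in type $G_2$, prime to $3$. For an edge $i\sim j$ the Cartan condition recorded above gives $q_{ii}^{a_{ij}}=q_{ij}q_{ji}=q_{jj}^{a_{ji}}$; since for finite type at least one of $a_{ij},a_{ji}$ equals $-1$ while $a_{ij}a_{ji}\in\set{1,2,3}$, this says that one of $q_{ii},q_{jj}$ is the $\ell$-th power of the other for some $\ell\in\set{1,2,3}$, with $\ell=3$ only in type $G_2$. By hypotheses (1) and (2) both elements have odd order, prime to $3$ in the $G_2$ case; hence $\ell$ is coprime to that order, so $x\mapsto x^{\ell}$ is an automorphism of the cyclic group they generate, and $q_{ii}$, $q_{jj}$ have equal order. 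Since the Dynkin diagram is connected this forces a common order $N$ for all the $q_{ii}$, with the stated arithmetic properties; in particular, whenever a triple bond $|a_{ij}|=3$ or a value $d_i=3$ occurs, the type is $G_2$ and $3\nmid N$.

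Finally I would construct $q$ and verify $q_{ii}=q^{2d_i}$. Fix $i_0$ with $d_{i_0}=1$. As $\mu_N\subset k^\times$ is cyclic of order $N$ and $\gcd(2d_{i_0},N)=\gcd(2,N)=1$, the map $x\mapsto x^{2d_{i_0}}$ is an automorphism of $\mu_N$; let $q\in\mu_N$ be the unique element with $q^{2d_{i_0}}=q_{i_0i_0}$. Then $q$ has order $N$ (an automorphism preserves orders), so in type $G_2$ the order of $q$ is prime to $3$. Now argue by induction along a path in the connected diagram starting at $i_0$: if $i\sim j$ and $q_{jj}=q^{2d_j}$ is already known, then, using $d_ja_{ji}=d_ia_{ij}$,
$$q_{ii}^{a_{ij}}=q_{jj}^{a_{ji}}=q^{2d_ja_{ji}}=q^{2d_ia_{ij}}=(q^{2d_i})^{a_{ij}},$$
so $(q_{ii}q^{-2d_i})^{|a_{ij}|}=1$. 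Since $q_{ii}q^{-2d_i}\in\mu_N$ and $\gcd(|a_{ij}|,N)=1$ (as $N$ is odd and $|a_{ij}|=3$ only in type $G_2$, where $3\nmid N$), we conclude $q_{ii}=q^{2d_i}$, completing the induction and the proof.

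The one ingredient that is not elementary is the opening combinatorial step: that the minimal symmetrizers of a connected finite-type Cartan matrix lie in $\set{1,2,3}$ and that the extreme value $3$ (equivalently, a triple bond) is confined to type $G_2$. This is precisely what makes the coprimality facts $\gcd(2d_i,N)=1$ and $\gcd(|a_{ij}|,N)=1$ available from hypotheses (1) and (2), and so it is the hinge of the argument; everything after it is routine bookkeeping with orders of roots of unity.
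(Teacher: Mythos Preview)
Your proof is correct. The paper does not actually supply a proof of this lemma; it merely quotes it as \cite[Lemma 2.3]{AS}, so there is nothing in the paper to compare your argument against. For what it is worth, your approach---extracting the symmetrizers $d_i\in\{1,2,3\}$ from the classification, propagating equality of the orders of the $q_{ii}$ along edges via $q_{ii}^{a_{ij}}=q_{jj}^{a_{ji}}$ and the coprimality of $|a_{ij}|$ with $N$, and then taking a $2d_{i_0}$-th root in $\mu_N$ at a short simple root---is exactly the standard argument and is essentially how the original source proceeds.
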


More generally, let $\mathcal D$ be a datum of finite Cartan type
in which the order $N_i$ of $q_{ii}$ is odd for all $i$, and the
order of $q_{ii}$ is prime to 3 for all $i$ in a connected
component of type $G_2$. It then follows that the order function
$N_i$ is constant, say equal to $N_J$, on each connected component
$J$. A datum satisfying these conditions will be called special
datum of finite Cartan type.

Fix a reduced decomposition of the longest element
$$w_0=s_{i_1}s_{i_2}\ldots s_{i_p}$$
of the Weyl group $W$ in terms of the simple reflections. Then
$$\set{s_{i_1}s_{i_2}\ldots s_{i_{l-1}}(\alpha_{i_l} )}_{i=1}^p$$
is a convex ordering of the positive roots.

Let $V=V(\mathcal D)$ be the crossed $kG$-module with basis $\set{
x_1,x_2,\ldots ,x_{\theta}}$, where $x_i\in V_{g_i}^{\chi_i}$ for
$1\le i\le\theta$. Then for all $1\le i\ne j\le\Theta$ the
elements $ad^{1-a_{ij}}x_i(x_j)$ are primitive in the free braided
Hopf algebra $\mathcal A(V)$ (see Lemma \ref{l17} or
\cite[Appendix 1]{AS1}). If $\mathcal D$ is as in the previous
Lemma then $\chi^{1-a_{ij}}\chi_j\ne\ep$. This implies that
$f(u_{ij})=0$ for any braided (Hopf) subalgebra $A$ of $\mathcal
A(V)$ containing $u_{ij}=ad^{1-a_{ij}}x_i(x_j)$ and any
$G$-invariant algebra map $f\colon A\to k$. Define root vectors in
$\mathcal A(V)$ as follows by iterated braided commutators of the
elements $x_1,x_2,\ldots ,x_{\theta}$, as in Lusztig's case but
with the general braiding:
$$x_{\beta_l}=T_{i_1}T_{i_2}\ldots T_{i_{l-1}}(x_{i_l}),$$
where $T_i(x_j)=\ad_{x_i}^{-a_{ij}}(x_j)$

In the quotient Hopf algebra $R(\mathcal D)=\mathcal
A(V)/(ad^{1-a_{ij}}x_i(x_j)|1\le i\ne j\le\theta )$ define root
vectors $x_{\alpha}\in\mathcal A(V)$ for $\alpha\in\Phi^+$ by the
same iterated braided commutators of the elements $x_1, x_2,\ldots
,x_{\theta}$ as in Lusztig's case but with respect to the general
braiding. (See \cite{AS2}, and the inductive definition of root
vectors in \cite{Ri} or also \cite[Section 8.1 and Appendix]{CP}.)
Let $K(\mathcal D)$ be the subalgebra of $R(\mathcal D)$ generated
by $\setst{ x_{\alpha}^N}{\alpha\in\Phi^+}$.

\begin{Theorem}\cite[Theorem 2.6]{AS}\label{ASth2.6} Let
$\mathcal D$ be a connected datum of finite Cartan type as in the
previous Lemma. Then
\begin{enumerate}
\item $\setst{x_{\beta_1}^{a_1}x_{\beta_2}^{a_2}\ldots x_{\beta_p}^{a_p}}
{a_1, a_2,\ldots , a_p\ge 0}$
forms a basis of $R(\mathcal D)$,
\item $K(\mathcal D)$ is a braided Hopf subalgebra of $R(\mathcal D)$ with basis
$$\setst{x_{\beta_1}^{Na_1}x_{\beta_2}^{Na_2}\ldots x_{\beta_p}^{Na_p}}{ a_1, a_2, \ldots , a_p\ge 0},$$
\item $[x_{\alpha},x_{\beta}^N]_c=0$, i.e: $x_{\alpha}x_{\beta}^N=
q_{\alpha\beta}^Nx_{\beta}^Nx_{\alpha}$ for all
$\alpha ,\beta \in \Phi^+$.
\end{enumerate}
\end{Theorem}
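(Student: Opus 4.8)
The plan is to follow the Lusztig--De Concini--Kac--Ringel construction of PBW bases for quantized enveloping algebras, carried out in the braided setting of $\mathcal A(V)$ and of its quotient $R(\mathcal D)=\mathcal A(V)/(u_{ij}\mid 1\le i\ne j\le\theta)$, where $u_{ij}=\ad^{1-a_{ij}}x_i(x_j)$; since these quantum Serre elements are primitive in $\mathcal A(V)$ (the primitivity Lemma above, cf.\ \cite[Appendix 1]{AS1}), $R(\mathcal D)$ is a braided Hopf algebra. After twisting to reduce to a symmetric braiding (\cite[Lemma 1.2]{AS}) and using the previous Lemma to fix the common order $N$ of the $q_{ii}$ on each connected component, I would first set up braided Lusztig operators: define $T_i$ on generators by $T_i(x_j)=\ad_{x_i}^{-a_{ij}}(x_j)$ for $i\ne j$ (with a rescaling on $x_i$) and check --- this is the step that uses the relations defining $R(\mathcal D)$ --- that each $T_i$ extends to an algebra automorphism of $R(\mathcal D)$ and that they satisfy the braid relations. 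The root vectors $x_{\beta_l}=T_{i_1}\cdots T_{i_{l-1}}(x_{i_l})$ then depend, up to a nonzero scalar, only on the convex order attached to the fixed reduced expression of $w_0$.

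For (1), the ordered monomials span $R(\mathcal D)$ by a straightening argument in the spirit of \cite{AS2}, \cite{Ri} (cf.\ \cite[Section 8.1 and Appendix]{CP}): for $\alpha<\beta$ in the convex order one shows, by induction on the height of $\beta-\alpha$ and the automorphism property of the $T_i$, that $x_\alpha x_\beta-q_{\alpha\beta}x_\beta x_\alpha$ lies in the span of ordered monomials in the $x_\gamma$ with $\alpha<\gamma<\beta$, so that repeated rewriting of an arbitrary word terminates. Linear independence of the ordered monomials is the delicate point: I would either identify the associated graded algebra for a suitable refinement of the $\mathbf Z[I]$-grading with a quantum affine (i.e.\ $q$-polynomial) algebra on symbols $X_{\beta_1},\dots,X_{\beta_p}$, for which the ordered monomials are visibly a basis, or realise $R(\mathcal D)\#kG$ inside a smash product on which the root vectors act by explicit operators, as in \cite{AS2}. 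The commutation relations (3) I would then obtain by reducing, via the $T_i$, to a rank-two root subsystem and doing the explicit computation in types $A_2$, $B_2$, $G_2$: the correction terms in $x_\alpha x_\beta^N-q_{\alpha\beta}^N x_\beta^N x_\alpha$ carry coefficients divisible by a quantum integer or a quantum binomial ${N\choose k}_{q}$ with $0<k<N$, which vanishes because the relevant root of unity has order exactly $N$ (this is where the common $N$ from the previous Lemma is essential); the cases $\alpha=\beta$ and $\alpha>\beta$ are trivial or follow by symmetry.

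Granting (1) and (3), statement (2) follows: the ordered monomials in $x_{\beta_1}^N,\dots,x_{\beta_p}^N$ span a subalgebra $K(\mathcal D)$ of $R(\mathcal D)$, and, forming a sub-family of the PBW basis from (1), they are linearly independent, which gives the asserted basis. To see that $K(\mathcal D)$ is a braided \emph{Hopf} subalgebra I would compute $\Delta(x_\alpha^N)$: writing $\Delta(x_\alpha)=x_\alpha\ot 1+1\ot x_\alpha+\Xi_\alpha$ with $\Xi_\alpha$ a sum of products of root vectors of strictly smaller $\mathbf Z[I]$-degree, the braided $q$-binomial formula together with ${N\choose i}_{q_{\alpha\alpha}}=0$ for $0<i<N$ gives $\Delta(x_\alpha^N)=x_\alpha^N\ot 1+1\ot x_\alpha^N$ plus correction terms which, by induction on the convex order, again lie in $K(\mathcal D)\ot K(\mathcal D)$; the counit restricts trivially, and the antipode is forced since $K(\mathcal D)$ is a connected graded braided bialgebra. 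I expect the \textbf{main obstacle} to be exactly the two technical cores of this machinery: verifying that the $T_i$ are genuine automorphisms of $R(\mathcal D)$ for the general, non-symmetric braiding, and controlling the terms $\Xi_\alpha$ in the coproducts of the non-simple root vectors tightly enough to keep the relevant pieces inside $K(\mathcal D)\ot K(\mathcal D)$ --- both are carried out in \cite{AS2} and \cite{AS1}, which is why in the text we simply invoke \cite[Theorem 2.6]{AS}.
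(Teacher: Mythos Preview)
Your proposal is a reasonable sketch of the Lusztig--De Concini--Kac style argument, and you correctly identify its two technical cores; but note that the paper itself contains no proof of this statement at all --- it is simply quoted from \cite[Theorem 2.6]{AS} and used as a black box, exactly as you anticipate in your final sentence. So there is nothing to compare: your outline is not an alternative to the paper's proof but rather a gloss on the external reference the paper invokes.
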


The vector space $V=V(\mathcal D)$ can also be viewed as a crossed
module in $^{\mathbf Z[I]}_{\mathbf Z[I]}YD$. The Hopf algebra
$\mathcal A(V)$, the quotient Hopf algebra $R(\mathcal D)=\mathcal
A(V)/(ad^{1-a_{ij}}x_i(x_j)|1\le i\ne j\le\theta )$ and its Hopf
subalgebra $K(\mathcal D)$ generated by $\setst{
x_{\alpha}^N}{\alpha\in\Phi^+}$ are all Hopf algebras in
$^{\mathbf Z[I]}_{\mathbf Z[I]}YD$. In particular, their
comultiplications are $\mathbf Z[I]$-graded. By construction, for
$\alpha\in\Phi^+$, the root vector $x_{\alpha} \in R(\mathcal D)$
is $\mathbf Z[I]$-homogeneous of $\mathbf Z[I]$-degree $\alpha$,
so that $x_{\alpha}\in R(\mathcal
D)_{g_{\alpha}}^{\chi_{\alpha}}$. To simplify notation write for
$1\le l\le p$ and for $\underline a =(a_1, a_2,\ldots ,
a_p)\in\mathbf N^p$
$$h_l=g_{\beta_l}^N\ ,\ \eta_l=\chi_{\beta_l}^N\ ,\ z_l=x_{\beta_l}^N$$
and $\underline a=\sum_{i=1}^pa_i\beta_i$
$$h^{\underline a}=h_1^{a_1}h_2^{a_2}\ldots h_p^{a_p}\in G\ ,\
\eta^{\underline
a}=\eta_1^{a_1}\eta_2^{a_2}\ldots\eta_p^{a_p}\in\tilde G\ , \
z^{\underline a}=z_1^{a_1}z_2^{a_2}\ldots z_p^{a_p}\in K(\mathcal
D).$$ In particular, for $e_l=(\delta_{kl})_{1\le k\le l}$, where
$\delta_{kl}$ is the Kronecker symbol, $e_l=\beta_l$ and
$z^{e_l}=z_l$ for $1\le l\le p$. The height of $\alpha
=\sum_{i=1}^{\theta}n_i\alpha_i\in\mathbf Z[I]$ is defined to be
the integer $ht(\alpha )=\sum_{i=1}^{\theta}n_i$. Observe that if
$\underline a,\underline b,\underline c\in\mathbf N^p$ and
$\underline a=\underline b+\underline c$ then
$$h^{\underline a}=h^{\underline b}h^{\underline c}\ ,\ \eta^{\underline a} =
\eta^{\underline b}\eta^{\underline c}\ \rm{and}\ ht(\underline
b)<ht(\underline a)\ \rm{if}\ \underline c\ne 0.$$ The
comultiplication on $K(\mathcal D)$ is $\mathbf Z[I]$-graded, so
that
$$\Delta_{K(\mathcal D)}(z^{\underline a})=z^{\underline a}\ot 1 + 1\ot z^{\underline a} +
\sum_{\underline b,\underline c\neq 0; \underline b+\underline
c=\underline a}t^{\underline a}_{\underline b\underline
c}z^{\underline b}\ot z^{\underline c}$$ and hence
$$\Delta_{K(\mathcal D)\#kG}(z^{\underline a})=z^{\underline a}\ot 1 + h^{\underline a}\ot z^{\underline a} +
\sum_{\underline b,\underline c\neq 0; \underline b+\underline
c=\underline a}t^{\underline a}_{\underline b\underline
c}z^{\underline b}h^{\underline c}\ot z^{\underline c}$$ on the
bosonization. The algebra $K(\mathcal D)$ is generated by the
subspace $L(\mathcal D)$ with basis $\set{z_1, z_2,\ldots , z_p}$.
The (left) $kG$-module structure on $\mathcal A(V)$ restricts to
$L(\mathcal D)$, and induces (right) $kG$-actions on $\Alg
(K(\mathcal D),k)$ and on $\Vect (L(\mathcal D),k)$ by the formula
$(fg)(x)=f(gx)$. A linear functional $f\colon L(\mathcal D)\to k$
is called $g$-invariant if $fg=f$ for all $g\in G$. Let
$\Vect_G(L(\mathcal D),k)$ be the subspace of $G$-invariant linear
functionals in $\Vect (L(\mathcal D),k)$.

\begin{Proposition}\label{connected} Let $\Vect_G(L(\mathcal D)$ and
$\Alg_G(K(\mathcal D),k)$ be the space of $G$-invariant linear
functionals and the set of $G$-invariant algebra maps, where
$\mathcal D$ is a connected special datum of finite Cartan type.
Then:
\begin{enumerate}
\item $\Vect_G(L(\mathcal D),k)= \setst{f\in\Vect (L(\mathcal
D),k)}{f(z_l)=0\ \rm{if}\ \eta_l\ne\ep}$. \item The restriction
map $\res\colon \Alg_G(K(\mathcal D),k)\to\Vect_G(L(\mathcal
D),k)$ is a bijection. The inverse is given by
$\res^{-1}(f)(z^{\underline a})=f(z_1)^{a_1})f(z_2^{a_2})\ldots
f(z_p^{a_p})$. \item $\Alg_G(K(\mathcal D),k)$ is a group under
convolution. \item The restriction map $\res\colon
{_G{\Alg_G}}(K(\mathcal D)\# kG,k)\to \Alg_G(K(\mathcal D),k)$ is
an isomorphism of groups with inverse defined by
$\res^{-1}(f)(x\ot g)=f(x)$, and ${_G\Alg_G}(K(\mathcal D)\#
kG,k)= \setst{\tilde f\in\Alg(K(\mathcal D)\#kG,k)}{\tilde
f_{|kG}=\ep}$. \item The map $\Theta =\rho\res^{-1}\colon
\Alg_G(K(\mathcal D),k)\to \Aut_{Hopf} (K(\mathcal D)\# kG)$,
defined by $\Theta (f) =\res^{-1}(f)*1*\res^{-1}(f)s$, is a group
homomorphism whose image is a subgroup in
$$\widetilde{\Aut}_{Hopf}(K(\mathcal D)\# k
G)=\setst{f\in\Aut_{Hopf}(K(\mathcal D)\# k G)}{f_{|kG}=id}.$$
\item For every $f\in\Alg_G(K(\mathcal D),k)$ the automorphism
$\Theta (f)$ of $K(\mathcal D)\#kG$ is determined by
\begin{eqnarray*}
&&\Theta (f)z^{\underline a} = z^{\underline a} +f(z^{\underline
a})(1-h^{\underline a})+ \sum_{\underline b,\underline c\ne
0;\underline b+\underline c=\underline a}t^{\underline
a}_{\underline b\underline c}
f(z^{\underline b})z^{\underline c} \\
&& +\sum_{\underline b,\underline c\ne 0;\underline b+\underline
c=\underline a}t^{\underline a}_{\underline b\underline
c}\left[z^{\underline b}+f(z^{\underline b})(1-h^{\underline b})
 +\sum_{\underline d,\underline e\ne 0;\underline d+\underline e=\underline b}
 t^{\underline b}_{\underline d,\underline e}f(z^{\underline d})z^{\underline e}\right]h^{\underline c} fs(z^{\underline c}).
\end{eqnarray*}
In particular, $\Theta (z^{\underline a})=z^{\underline a}
+f(z^{\underline a})(1-h^{\underline a})$ if $ht(\underline a)=1$.
\end{enumerate}
\end{Proposition}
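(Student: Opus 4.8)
The overall plan is to read off parts (1)--(5) from results already in hand and to reserve the actual computation for part (6); the main obstacle will be the bookkeeping in that last part.

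For (1), I would note that $z_l=x_{\beta_l}^N$ lies in the isotypic component $K(\mathcal D)_{h_l}^{\eta_l}$, so the $G$-action satisfies $g\cdot z_l=\eta_l(g)z_l$ and hence $(fg)(z_l)=\eta_l(g)f(z_l)$ for $f\in\Vect(L(\mathcal D),k)$; thus $G$-invariance of $f$ is equivalent to $f(z_l)=0$ whenever $\eta_l\neq\ep$. For (2), Theorem \ref{ASth2.6} presents $K(\mathcal D)$ as a quantum affine space: it has PBW basis $\{z^{\underline a}\}$, and together with item (3) of that theorem this shows the only relations among the $z_l$ are the $q$-commutations $z_lz_m=q_{\beta_l\beta_m}^{N^2}z_mz_l$. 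To extend $f\in\Vect_G(L(\mathcal D),k)$ to an algebra map one only needs $f(z_l)f(z_m)=q_{\beta_l\beta_m}^{N^2}f(z_m)f(z_l)$; but if $f(z_l)f(z_m)\neq 0$ then $\eta_l=\eta_m=\ep$ by (1), so $q_{\beta_l\beta_m}^{N}=\chi_{\beta_m}^N(g_{\beta_l})=\eta_m(g_{\beta_l})=1$ and the relation is automatic. Hence $\res^{-1}(f)$, the unique algebra map with $z_l\mapsto f(z_l)$, is well defined; it is $G$-invariant (again by the isotypic argument, $\eta^{\underline a}=\ep$ whenever $\res^{-1}(f)(z^{\underline a})\neq 0$) and is visibly a two-sided inverse of $\res$, which is (2).

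Parts (3), (4) and (5) I would get from the specialization $H=kG$, $K=K(\mathcal D)$ of Proposition \ref{invariant}: its item (1) gives (3), item (2) gives the group isomorphism $\res$ with inverse $f\ot\ep$ (that is, $\res^{-1}(f)(x\ot g)=f(x)$) in (4), and item (3) gives (5). The only thing to add is the identification ${_G\Alg_G}(K(\mathcal D)\#kG,k)=\{\tilde f:\tilde f_{|kG}=\ep\}$: any $\tilde f$ in the left-hand side has $\tilde f(1\#g)=\tilde f((1\#g)(1\#1))=\ep(g)$; conversely, if $\tilde f$ is an algebra map with $\tilde f_{|kG}=\ep$ then $\tilde f(x\#g)=\tilde f(x\#1)\ep(g)$, while $(g\cdot x)\#1=(1\#g)(x\#1)(1\#g^{-1})$ shows that $x\mapsto\tilde f(x\#1)$ is $G$-invariant, so $\tilde f=\res^{-1}(\res\tilde f)$ indeed lies in ${_G\Alg_G}(K(\mathcal D)\#kG,k)$.

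For (6), I would put $\tilde f=\res^{-1}(f)$ so that $\Theta(f)=\rho_{\tilde f}=\tilde f*1*\tilde fs$, grouped as $(\tilde f*1)*\tilde fs$; by (4) the convolution inverse satisfies $\tilde fs(z^{\underline c}\#h)=fs(z^{\underline c})$ and $\tilde fs(1\#h)=1$, with $fs$ the convolution inverse of $f$ in $\Alg_G(K(\mathcal D),k)$. Now expand the coproduct $\Delta_{K(\mathcal D)\#kG}(z^{\underline a})$ displayed just before the Proposition, whose left/right legs are $z^{\underline a}\ot 1$, $h^{\underline a}\ot z^{\underline a}$ and $t^{\underline a}_{\underline b\underline c}z^{\underline b}h^{\underline c}\ot z^{\underline c}$ with $\underline b+\underline c=\underline a$, and apply $\tilde f*1$ to the left leg and $\tilde fs$ to the right leg. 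Using $\tilde f(z^{\underline d}\#h')=f(z^{\underline d})$ and $\tilde f(1\#h')=1$, one more expansion of the coproduct gives $(\tilde f*1)(z^{\underline b}h^{\underline c})=[z^{\underline b}+f(z^{\underline b})+\sum_{\underline d+\underline e=\underline b}t^{\underline b}_{\underline d\underline e}f(z^{\underline d})z^{\underline e}]h^{\underline c}$, while $(\tilde f*1)(z^{\underline a})=f(z^{\underline a})+z^{\underline a}+\sum_{\underline b+\underline c=\underline a}t^{\underline a}_{\underline b\underline c}f(z^{\underline b})z^{\underline c}$ and $(\tilde f*1)(h^{\underline a})=h^{\underline a}$. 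Multiplying by $\tilde fs$ of the matching right legs ($1$, $fs(z^{\underline a})$ and $fs(z^{\underline c})$ respectively) produces $\Theta(f)z^{\underline a}$ as a sum in which the only summands not yet in the desired form are the scalar $f(z^{\underline a})\cdot 1$, the term $fs(z^{\underline a})h^{\underline a}$, and the $f(z^{\underline b})$'s appearing inside the bracket. Substituting the recursion $fs(z^{\underline a})=-f(z^{\underline a})-\sum_{\underline b+\underline c=\underline a}t^{\underline a}_{\underline b\underline c}f(z^{\underline b})fs(z^{\underline c})$ (that is, the identity $f*fs=\ep$) and using $h^{\underline a}=h^{\underline b}h^{\underline c}$ together with the fact that all $f$-values are scalars, $f(z^{\underline a})\cdot 1$ and $fs(z^{\underline a})h^{\underline a}$ combine into $f(z^{\underline a})(1-h^{\underline a})$ and the leftover is exactly what upgrades each $f(z^{\underline b})$ inside the bracket to $f(z^{\underline b})(1-h^{\underline b})$, giving the stated formula. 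The main obstacle is precisely this bookkeeping: keeping track of which group-like element multiplies which summand while leaving the structure constants $t^{\underline a}_{\underline b\underline c}$ and $t^{\underline b}_{\underline d\underline e}$ unexpanded. The last assertion is immediate, since for $ht(\underline a)=1$ there is no decomposition $\underline a=\underline b+\underline c$ with $\underline b,\underline c\neq 0$, so both sums vanish and $\Theta(f)z^{\underline a}=z^{\underline a}+f(z^{\underline a})(1-h^{\underline a})$.
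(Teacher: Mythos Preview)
Your proposal is correct and follows essentially the same route as the paper: parts (1)--(5) are obtained just as in the paper (item (1) from the isotypic grading, item (2) from the quantum-affine-space presentation coming from Theorem \ref{ASth2.6}, and items (3)--(5) by specializing Proposition \ref{invariant}), and in part (6) you compute $\Theta(f)(z^{\underline a})$ from the displayed coproduct formula together with the identity $f*fs=\ep$, exactly as the paper does. The only cosmetic differences are that in (2) the paper kills the $q$-commutation relation directly via $G$-invariance ($f(z_i)(f(z_j)-f(h_i z_j))=0$) rather than via your ``$\eta_m=\ep$'' detour, and in (6) the paper writes out $\Delta^{(2)}(z^{\underline a})$ explicitly before applying $\tilde f\ot 1\ot \tilde fs$, whereas you group the same calculation as $(\tilde f*1)*\tilde fs$; the substance is identical.
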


\begin{proof} If $f\in\Vect_G(L(\mathcal D),k)$ then
$f(z_i)=f(gz_i)=\eta_i(g)f(z_i)$ for all
$1\le i\le p$ and for all $g\in G$.
Thus, $f(z_i)=0$ if $\eta_i\ne\ep$.

By Theorem \ref{ASth2.6} it follows that $K(\mathcal D)\cong
TL(\mathcal D)/(z_iz_j-\eta_j(h_i)z_jz_i|1\le i<j\le p)$. If
$f\in\Vect_G(L(\mathcal D),k)$ then the induced algebra map
$\tilde f\colon TL(\mathcal D)\to k$ factors uniquely through
$K(\mathcal D)$, since $\tilde
f(z_iz_j-\eta_j(h_i)z_jz_i)=f(z_i)f(z_j)-
\eta_j(h_i)f(z_j)f(z_i)=f(z_i)(f(z_j)-f(h_iz_j))=0$
for $1\le i,j\le p$, by the fact that $f$ is $G$-invariant. This
proves the second assertion.

The next three assertions are a special case of \ref{invariant}.

The set of all algebra maps $\Alg (K(\mathcal D),k)$ may not be a
group under convolution, but the subset $\Alg_G(K(\mathcal D),k)$
is. If $f_1$, $f_2$ and $f$ are $G$-invariant then
\begin{eqnarray*}
f_1*f_2(x y) & = & (f_1\ot f_2)(m\ot m)(1\ot c\ot 1)(x_1\ot x_2\ot y_1\ot y_2) \\
& & =(f_1\ot f_2)(x_1(x_2)_{-1}y_1\ot (x_2)_0y_2 \\
& & =f_1(x_1)\ep ((x_2)_{-1})f_1(y_1)f_2((x_2)_0)f_2(y_2) \\
& & =f_1(x_1)f_1(y_1)f_2(x_2)f_2(y_2) =f_1*f_2(x)f_1*f_2(y)
\end{eqnarray*}
and moreover, $(f_1*f_2)g= f_1g*f_2g =f_1*f_2$, $fsg=fgs=fs$, $\ep
*f =f =f*\ep$ and $f*fs =\ep = fs*f$ so that $\Alg_G(K(\mathcal
D)$ is closed under convolution multiplication and inversion.

The map $\Psi\colon \Alg_G(k(\mathcal D),k)\to \Alg (K(\mathcal
D)\# kG,k)$ given by $\Psi (f)(x\ot g)=f(x)$, is a homomorphism,
since
\begin{eqnarray*}
\res^{-1}(f_1)*\res^{-1} (f_2)(x\ot g)\hskip -2pt &=&
\res^{-1}(f_1)\ot\res^{-1} (f_2)(x_1\ot (x_2)_{-1}g\ot (x_2)_0\ot g) \\
& = & f_1(x_1)\ep ((x_2)_{-1})f_2((x_2)_0) \\
& = & f_1(x_1)f_2(x_2) = f_1*f_2(x) \\
& = & \res^{-1} (f_1)*\res^{-1} (f_2)(x\ot g).
\end{eqnarray*}
The inverse $\Psi^{-1}\colon \widetilde{\Alg}(K(\mathcal D)\#
kG,k)\to \Alg_G(K(\mathcal D),k)$ is given by $\Psi^{-1}(\tilde
f)(x)=\tilde f(x\ot 1)$, is just the restriction map.

It is convenient to use the notation $\Psi (f)=\tilde f$. Then
$\Theta (f) = \tilde f*1*\tilde fs$ and
\begin{eqnarray*}
\Theta (f_1*f_2) & = & \widetilde{f_1*f_2}*1*\widetilde{f_1*f_2}s \\
& = & \tilde f_1*\tilde f_2*1*\tilde f_2s*\tilde f_1s \\
& = & \Theta (f_1)\Theta (f_2).
\end{eqnarray*}
In particular,
$\Theta (f)\Theta (fs) =\Theta (f*fs)=
\Theta (\ep )=1=\Theta (fs*f)=\Theta (fs)\Theta (f)$.
Moreover,
\begin{eqnarray*}
\Theta (f) (x y) & = & \tilde f(x_1y_1)x_2y_2\tilde fs(x_3y_3) \\
& = & \tilde f(x_1)\tilde f(y_1)x_2y_2\tilde fs(y_3)\tilde f(x_3) \\
& = & \tilde f(x_1)x_2\tilde fs(x_3)\tilde f(y_1)y_2\tilde fs(y_3) \\
& =  & \Theta (f)(x)\Theta (f)(y)
\end{eqnarray*}
and
\begin{eqnarray*}
\Delta\Theta (f) & = & \Delta (\tilde f*1*\tilde fs) \\
& = & \Delta (\tilde f\ot 1\ot\tilde fs)\Delta^{(2)} \\
& = & (\tilde f\ot 1\ot 1\tilde fs)\Delta^{(3)} \\
& = & (\tilde f\ot 1\ot\ep\ot 1\ot\tilde fs)\Delta^{(4)} \\
& = & (\tilde f\ot 1\ot\tilde fs\ot\tilde f\ot 1\tilde fs)\Delta^{(5)} \\
& = & (\tilde f*1*\tilde fs\ot\tilde f*1*\tilde fs)\Delta \\
& = & (\Theta (f)\ot\Theta (f))\Delta ,
\end{eqnarray*}
showing that $\Theta (f)$ is an automorphism of $K(\mathcal D)\#
kG$ with inverse $\Theta (fs)$.

The remaining item now follows from the formula for the
comultiplication
$$\Delta (z^{\underline a}) = z^{\underline a}\ot 1 + h^{\underline a}\ot z^{\underline a} +
\sum_{\underline b,\underline c\ne 0;\underline b+\underline
c=\underline a}t^{\underline a}_{\underline b\underline
c}z^{\underline b}h^{\underline c}\ot z^{\underline c}$$ of
$K(\mathcal D)\# kG$, which implies
\begin{eqnarray*}
&&\hskip -10pt \Delta^{(2)}(z^{\underline a}) = z^{\underline
a}\ot 1\ot 1 +h^{\underline a}\ot z^{\underline a}\ot 1
+ \sum t^{\underline a}_{\underline b\underline c}z^{\underline b}h^{\underline c}\ot z^{\underline c}\ot 1 +
h^{\underline a}\ot h^{\underline a}\ot z^{\underline a} \\
& & + \sum t^{\underline a}_{\underline b\underline
c}\left[z^{\underline b}h^{\underline c}\ot h^{\underline c}\ot
z^{\underline c} + h^{\underline a}\ot z^{\underline
b}h^{\underline c}\ot z^{\underline c} +\sum t^{\underline
b}_{\underline r\underline l}z^{\underline r}h^{\underline
l+\underline c}\ot z^{\underline l}h^{\underline c}\ot
z^{\underline c}\right].
\end{eqnarray*}
and
$$1*s(z^{\underline a})=z^{\underline a} + h^{\underline a}s(z^{\underline a}) +
\sum t^{\underline a}_{\underline b\underline c}z^{\underline
b}h^{\underline c}s(z^{\underline c}) =\ep (z^{\underline a})=0.$$
Applying $\tilde f$ to the latter gives
$$f(z^{\underline a})+fs(z^{\underline a}) + \sum t^{\underline a}_{\underline b\underline c}f(z^{\underline b})fs(z^{\underline c}) = 0,$$
which will be used in the following evaluation. Now compute
\begin{eqnarray*}
\Theta (f)(z^{\underline a}) & = & f(z^{\underline a}) + z^{\underline a} +
\sum t^{\underline a}_{\underline b\underline c}
f(z^{\underline b})z^{\underline c} + h^{\underline a}fs(z^{\underline a}) \\
& & + \sum t^{\underline a}_{\underline b\underline
c}[f(z^{\underline b}) + z^{\underline b} +
\sum t^{\underline b}_{\underline r\underline l}f(z^{\underline r})z^{\underline l}]h^{\underline c}fs(z^{\underline c}) \\
& = & z^{\underline a} +f(z^{\underline a})(1-h^{\underline a}) + \sum t^{\underline a}_{\underline b\underline c}f(z^{\underline b})
z^{\underline c} \\
& & + \sum t^{\underline a}_{\underline b\underline
c}[z^{\underline b} + f(z^{\underline b})(1-h^{\underline b}) +
\sum t^{\underline b}_{\underline r\underline l}f(z^{\underline
r})z^{\underline l}]h^{\underline c}fs(z^{\underline c})
\end{eqnarray*}
to get the required result.
\end{proof}

For any $f\in\Alg_G(K(\mathcal D),k)$ define by induction on
$ht(a)$ the following elements in the augmentation ideal of $kG$
$$u_{\underline a}(f)= f(z^{\underline a})(1-h^{\underline a})+\sum_{\underline b,\underline c\ne 0;\underline b+\underline c=\underline a}
t^{\underline a}_{\underline b\underline c}f(z^{\underline
b})u_{\underline c}(f),$$ where $u_{\underline
a}(f)=f(z^{\underline a})(1-h^{\underline a})$ if $ht (\underline
a)=1$. In particular, for a positive root $\alpha =\beta_l\in
\Phi^+$ and $x_{\alpha}^N=x_{\beta_l}^N=z^{e_l}$ write
$u_l(f)=u_{e_l}(f)=u_{\alpha}(f)$. We can think of
$f=(f(x_{\alpha}^N)|\alpha\in\Phi^+)$ as root vector parameters in
the sense of \cite{AS}.

\begin{Corollary} Let $\mathcal D$ be a special connected
datum of Cartan type. Then
$$U(\mathcal D, f) = R(\mathcal D)\#kG/(x_{\alpha}^N+u_{\alpha}(f))$$
are the liftings of $\mathcal B(V)\#kG =U(\mathcal D, \ep )$.
\end{Corollary}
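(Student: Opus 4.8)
The plan is to present $U(\mathcal{D},f)$ as a quotient of $R(\mathcal{D})\# kG$ by a Hopf ideal conjugate to the one cutting out the Nichols bosonization $\mathcal{B}(V)\# kG$, and then to match this family against the liftings classified in \cite{AS}. First I would record, from Theorem \ref{ASth2.6}, that $K(\mathcal{D})$ is a braided Hopf subalgebra of $R(\mathcal{D})$ and, using the PBW bases in items (1) and (2) together with the relations $x_\alpha x_\beta^N=q_{\alpha\beta}^N x_\beta^N x_\alpha$ of item (3), that $R(\mathcal{D})$ is free — hence faithfully flat — over $K(\mathcal{D})$ on the monomials with all exponents $<N$; so $R(\mathcal{D})\# kG$ is faithfully flat over $K(\mathcal{D})\# kG$. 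Since $K(\mathcal{D})^+$ is the ideal of $K(\mathcal{D})$ generated by $z_1,\dots,z_p$ and $R(\mathcal{D})/(K(\mathcal{D})^+)$ is the algebra with PBW basis of exponents $<N$, i.e. the Nichols algebra $\mathcal{B}(V)$, bosonization gives $R(\mathcal{D})\# kG/(K(\mathcal{D})^+\# kG)=\mathcal{B}(V)\# kG=U(\mathcal{D},\ep)$.

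Next I would take $J=K(\mathcal{D})^+\# kG$ and, for $f\in\Alg_G(K(\mathcal{D}),k)$, its conjugate $J_f=\Theta(f)(J)=\rho_{\res^{-1}(f)}(J)$, which is again a Hopf ideal of the Hopf subalgebra $K(\mathcal{D})\# kG$ by Proposition \ref{connected}. Since $\Theta(f)$ is an algebra automorphism fixing $kG$ pointwise, $J_f$ is generated as an ideal by the $p$ elements $\Theta(f)(z_l)$, and so $(J_f)$ is the ideal of $R(\mathcal{D})\# kG$ these generate. The key point is the identity $(J_f)=(x_\alpha^N+u_\alpha(f)\mid\alpha\in\Phi^+)$: one proves by induction on $ht(\underline a)$ — comparing the recursion defining $u_{\underline a}(f)$ with the explicit formula for $\Theta(f)(z^{\underline a})$ in Proposition \ref{connected}(6) — that, modulo the relations of smaller height, $\Theta(f)(z_l)$ generates the same ideal as $x_{\beta_l}^N+u_{\beta_l}(f)$. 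Granting this, $R(\mathcal{D})\# kG/(J_f)=U(\mathcal{D},f)$ is a Hopf algebra by faithful flatness and Masuoka's pushout construction (Section \ref{s2}, \cite{Ma}), and since $J$ and $J_f$ are conjugate it is even monoidally Morita--Takeuchi equivalent to $U(\mathcal{D},\ep)$ by Theorem \ref{po} (this last point, together with the nonvanishing hypothesis of \cite{BDR}, being really the subject of the following sections rather than of the corollary itself). The height induction, which carries along the structure constants $t^{\underline a}_{\underline b\underline c}$ and the $\mathbf Z[I]$-grading, is the one genuinely computational part of the argument, but it is entirely formal given Proposition \ref{connected} and the definition of $u_{\underline a}(f)$.

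Finally I would check that the $U(\mathcal{D},f)$ are precisely the liftings of $\mathcal{B}(V)\# kG$. Each is a Hopf quotient of the pointed $R(\mathcal{D})\# kG$, hence pointed; and since every $u_\alpha(f)$ lies in the augmentation ideal $(kG)^+$, filtering by total degree in $x_1,\dots,x_\theta$ — for which the leading term of the relation $x_\alpha^N+u_\alpha(f)$ is just $x_\alpha^N$ — gives $\gr U(\mathcal{D},f)\cong R(\mathcal{D})\# kG/(x_\alpha^N)=\mathcal{B}(V)\# kG$, so $\dim U(\mathcal{D},f)=\dim\mathcal{B}(V)\# kG$, $kG$ is the coradical, and $\gr^c U(\mathcal{D},f)\cong\mathcal{B}(V)\# kG$, i.e. $U(\mathcal{D},f)$ is a lifting. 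Conversely, since $\mathcal{D}$ is connected there are no linking parameters, so by the classification theorem of \cite{AS} every lifting of $\mathcal{B}(V)\# kG$ has the form $R(\mathcal{D})\# kG/(x_\alpha^N-\mu_\alpha)$ for root-vector parameters $\mu=(\mu_\alpha)$, subject to the same constraints that by Proposition \ref{connected}(1) cut out $\Vect_G(L(\mathcal{D}),k)\cong\Alg_G(K(\mathcal{D}),k)$; under the correspondence $\mu_\alpha\leftrightarrow f(x_\alpha^N)$ the relations of \cite{AS} become exactly $x_\alpha^N+u_\alpha(f)=0$, so $\{U(\mathcal{D},f)\}_f$ exhausts the liftings. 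I expect the two delicate points to be this height induction identifying $(J_f)$ and the appeal to the classification of \cite{AS} for exhaustiveness; everything else is the machinery assembled in Sections \ref{s2}--\ref{s3}.
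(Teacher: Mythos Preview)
Your proposal is correct and follows essentially the same route as the paper: identify the Hopf ideal $I=K(\mathcal D)^+\# kG$ cutting out $\mathcal B(V)\# kG$, conjugate it by $\Theta(f)$, and use the inductive formula of Proposition~\ref{connected}(6) together with the recursion defining $u_{\underline a}(f)$ to see that $\Theta(f)(I)$ is generated by $\{x_\alpha^N+u_\alpha(f)\}$. The paper's own proof is extremely terse---it simply asserts that the inductive formulas match and that the resulting quotients are the liftings, without spelling out faithful flatness, the filtering argument showing $\gr^c U(\mathcal D,f)\cong\mathcal B(V)\# kG$, or the appeal to \cite{AS} for exhaustiveness---so your write-up is in fact more careful than the original on these points.
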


\begin{proof} The augmentation ideal of $K(\mathcal D)$,
the ideal $I$ of $K(\mathcal D)\#kG$ and the ideal $(I)$ in
$R(\mathcal D)\#kG$ generated by $\setst{x_{\alpha}}{\alpha\in
\mathcal X}$ are Hopf ideals. It follows from the inductive
formulas for $\Theta (f)(z^{\underline a})$ and $u_{\underline
a}(f)$ above that for every $f\in\Alg_G(K(\mathcal D),k)$ the
ideals $I_f=\Theta (f)(I)$ in $K(\mathcal D)\#kG$ and $(I_f)$ in
$R(\mathcal D)\#kG$ generated by
$\setst{x_{\alpha}^N+u_{\alpha}(f)}{\alpha\in \Phi^+}$ are Hopf
ideals as well. The Hopf algebras $U(\mathcal D,f)=K(\mathcal
D)\#kG/(\Theta (f)(I))$ are the liftings of $U(\mathcal D,\ep
)=\mathcal B(V)\# kG$ parameterized by
$f=(f(x_{\alpha}^N|\alpha\in\Phi^+)\in\Alg_G(K(\mathcal D),k).$
\end{proof}

In the not necessarily connected case of a special datum of finite
Cartan type the elements $ad^{1-a_{ij}}x_i(x_j)$ are still
primitive in $\mathcal A(V)$ and $R(\mathcal D)=\mathcal
A(V)/(ad^{1-a{ij}}x_i(x_j)|i\sim j)$ is still a Hopf algebra,
which contains $R(\mathcal D_J)$ for every connected component
$J\in\mathcal X$. The Hopf subalgebra $K(\mathcal D)$ generated by
the subspace with basis  $S=\setst{z_J^{a_J}, z_{ij}}{J\in\mathcal
X, i\not\sim j}$, where $z_{ij}=[x_i,x_j]_c$, contains $K(\mathcal
D_J)$ for every $J\in\mathcal X$. The comultiplication in each
components $K(\mathcal D_J)$ and $K(\mathcal D_J)\#kG$ is of
course given as before in the connected case, while for $i\not\sim j$
$$\Delta (z_{ij})=z_{ij}\ot 1+1\ot z_{ij}$$
in $K(\mathcal D)$ and $R(\mathcal D)$ and
$$\Delta (z_{ij})=z_{ij}\ot 1 + g_ig_j\ot z_{ij}$$
in the bozonizations $K(\mathcal D)\#kG$ and $R(\mathcal D)\#kG$.
The space of $G$-invariant linear functionals $\Vect_G(L(\mathcal
D),k)$ consists elements $f\in\Vect (L(\mathcal D),k)$ such that
$$f(z_r)=0\ \rm{if}\ \eta_r\ne\ep\ \rm{for}\ 1\le r\le p\ \rm{and}\ f(z_{ij})=0\ \rm{if}\
\chi_i\chi_j\ne\ep\ \rm{if}\ i\not\sim j \}.$$
The induced algebra map $\tilde f\colon TL(\mathcal D)\to k$ of
such a linear functional satisfies
\begin{itemize}
\item $\tilde f([z_r,z_s]_c)= f(z_r)(f(z_s)-f(h_rz_s))=0,$
\item $\tilde f([z_{ij},z_r]_c)=f(z_{ij})(f(z_r)-f(g_ig_jz_r)=0,$
\item $\tilde f([z_{ij},z_{lm}]_c)=f(z_{ij})(f(z_{lm})-f(g_ig_jz_{lm}))=0,$
\end{itemize}
since $f$ is $G$-invariant. It therefore factors through
$K(\mathcal D)$, since
$$TL(\mathcal D)/([z_r,z_s]_c, [z_{ij},z_r]_c, [z_{ij},z_{lm}]_c)
=K(\mathcal D)/([z_r,z_s]_c, [z_{ij},z_r]_c, [z_{ij},z_{lm}]_c).$$
It follows that the restriction maps
$$\res \colon  _G\Alg_G(K(\mathcal D)\#kG)\to \Alg_G(K(\mathcal D),k)\to
\Vect_G(L(\mathcal D),k)$$
are bijective, and $f=\setst{
f(z_{ij})}{i\not\sim j}\cup\setst{f(z_r)}{1\le r\le p}$ can be
interpreted as a combination of linking parameters and root vector
parameters in the sense of \cite{AS}. Then map
$$\Theta :\Alg_G(K(\mathcal D),k)\to \Aut_{Hopf}(K(\mathcal D)\#kG)$$
given by $\Theta (f)=f*1*fs$ is a homomorphism of groups.
Moreover, since $z_{ij}+g_ig_js(z_{ij})=m(1\ot s)\Delta
(z_{ij})=0$ in $K(\mathcal D)\#kG$, it follows that
$$\Theta (f)(z_{ij})=
(f\ot 1\ot fs)\Delta^{(2)}(z_{ij})=z_{ij}+f(z_{ij})(1-g_jg_j)$$
when $i\not\sim j$, while $\Theta f(z_r)$ is given inductively as
in \ref{connected}. In this way one obtains therefore all the
`liftings' of $B(V)\# kG$ for special data of finite Cartan type.

\begin{Theorem}\label{main3} Let $\mathcal D$ be a special datum of
finite Cartan type. Then
$$U(\mathcal D,f)= R(\mathcal D)\#kG/(x_{\alpha}^{N_{\alpha}}+u_{\alpha}(f),
[x_i,x_j]_c+f(z_{ij})(1-g_ig_j)|\alpha\in\Phi^+ , i\not\sim j)$$
for $f\in\Vect_G(L(\mathcal D),k)$ are the liftings of $\mathcal
B(V)\#kG =U(\mathcal D, \ep )$. Moreover, all these liftings are
monoidally Morita-Takeuchi equivalent.
\end{Theorem}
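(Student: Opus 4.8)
The plan is to prove the two assertions in turn. The first is a packaging of the component-wise computations already made, together with the Andruskiewitsch--Schneider classification; the second is an application of the corrected Masuoka pushout theorem (Theorem \ref{po}), or equivalently of the Corollary following Proposition \ref{invariant}, to the inclusion $K(\mathcal D)\subseteq R(\mathcal D)$ over $kG$.

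For the first assertion, put $A=R(\mathcal D)\# kG$ and $A'=K(\mathcal D)\# kG$, a Hopf subalgebra of $A$, and let $I=K(\mathcal D)^+\# kG$, a Hopf ideal of $A'$; the Hopf ideal $(I)$ of $A$ that it generates is $\bigl(x_\alpha^{N_\alpha}\ (\alpha\in\Phi^+),\ [x_i,x_j]_c\ (i\not\sim j)\bigr)$, so $A/(I)=\mathcal B(V)\# kG=U(\mathcal D,\ep)$. For $f\in\Vect_G(L(\mathcal D),k)$ let $\tilde f=\res^{-1}(f)\colon A'\to k$ be the $G$-invariant algebra map supplied by Proposition \ref{connected} (and its non-connected variant) and let $\Theta(f)=\rho_{\tilde f}=\tilde f*1*\tilde fs$, so that $J_f=\Theta(f)(I)$ is a Hopf ideal of $A'$ conjugate to $I$. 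Feeding the inductive formula for $\Theta(f)(z^{\underline a})$ of Proposition \ref{connected}(6), the elements $u_{\underline a}(f)$, and the identity $\Theta(f)(z_{ij})=z_{ij}+f(z_{ij})(1-g_ig_j)$ for $i\not\sim j$ into an induction on height --- exactly as in the connected case above and in the discussion preceding the theorem --- identifies the Hopf ideal $(J_f)$ of $A$ with $\bigl(x_\alpha^{N_\alpha}+u_\alpha(f),\ [x_i,x_j]_c+f(z_{ij})(1-g_ig_j)\bigr)$, whence $A/(J_f)=U(\mathcal D,f)$. That the $U(\mathcal D,f)$ exhaust all liftings of $\mathcal B(V)\# kG$ is part of the classification in \cite{AS}.

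For the second assertion, apply the Corollary following Proposition \ref{invariant} --- which rests on Theorem \ref{po}, hence on \cite{Sch} --- with $R=R(\mathcal D)$, $K=K(\mathcal D)$, $H=kG$ and the Hopf ideal $K(\mathcal D)^+$: since $I$ and $J_f=\Theta(f)(I)$ are conjugate Hopf ideals of $A'$, one obtains a $k$-linear monoidal equivalence between the comodule categories of $A/(I)=U(\mathcal D,\ep)$ and $A/(J_f)=U(\mathcal D,f)$, provided $A/(\tilde f*I)\ne 0$. Since monoidal Morita--Takeuchi equivalence is an equivalence relation, transitivity then yields that all the $U(\mathcal D,f)$, $f\in\Vect_G(L(\mathcal D),k)$, are monoidally Morita--Takeuchi equivalent.

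The delicate point is exactly the hypothesis $A/(\tilde f*I)\ne 0$ of Theorem \ref{po}, the condition that was missing from \cite[Theorem 2]{Ma} and restored in \cite{BDR}; for the ideals $I$ and twists $\tilde f$ occurring here it is verified in the appendix of the second version of \cite{Ma2}. The faithful flatness needed there and for the existence of Masuoka's biGalois object is automatic in our situation, since $R(\mathcal D)\# kG$ has cocommutative coradical $kG$ and is therefore left and right faithfully flat over $K(\mathcal D)\# kG$. Everything else is the routine bookkeeping of the height induction and relies only on Proposition \ref{connected}.
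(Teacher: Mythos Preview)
Your proof is correct and follows essentially the same approach as the paper: identify the relevant Hopf ideals of $K(\mathcal D)\# kG$, show via Proposition \ref{connected} and the height induction that $\Theta(f)$ carries $I$ to the ideal generated by $x_\alpha^{N_\alpha}+u_\alpha(f)$ and $[x_i,x_j]_c+f(z_{ij})(1-g_ig_j)$, invoke \cite{AS} for the exhaustion of liftings, and then apply Theorem \ref{po} (with the nonvanishing hypothesis verified in the appendix of \cite{Ma2}) to conclude monoidal Morita--Takeuchi equivalence. Your explicit remark on faithful flatness via the cocommutative coradical is a helpful addition, but otherwise the argument matches the paper's own proof.
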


\begin{proof} Clearly, $U(\mathcal D,f)$ is a lifting of
$\mathcal B(V)\#kG$ for the root vector parameters $\setst{
\mu_{\alpha}=f(x_{\alpha}^{N_{\alpha}}}{\alpha\in\Phi^+}$ and the
linking parameters $\setst{\lambda_{ij}=f([x_i,x_j]_c)}{i\not\sim
j}$. By \cite{AS} all liftings of $\mathcal B(V)\#kG$ are of that
form. To proof the last assertion let in \ref{po} $H=kG$,
$K=K{\mathcal D}$, $f\in\Alg_G(K,k)$. Then the ideal
$I=(x_{\alpha}^{n_\alpha}, [x_i,x_j]_c|\alpha\in\Phi^+, i\not\sim
j)$  and $J=\Theta (f)(I)=(x_{\alpha}^{N_{\alpha}}+u_{\alpha}(f),
[x_i,x_j]_c+f(z_{ij})(1-g_ig_j)|\alpha\in\Phi^+ , i\not\sim j)$ of
$K\#kG$ are conjugate. By \ref{po} the quotient Hopf algebras
$U(\mathcal D,\ep)$ and $U(\mathcal D, f)$ of $R(\mathcal D)\# kG$ are monoidally
Morita-Takeuchi equivalent. The additional condition $(R\# kG)/(\res^{-1}(f)*(I\# kG))\ne 0$
is verified in (\cite{Ma2}, Appendix).
\end{proof}

\section{Cocycle deformations and cohomology}

In this section we describe liftings of special crossed modules
$V$ over finite abelian groups in terms of cocycle deformations of
$B(V)\#kG$, and determine the infinitesimal part of the
deformations by means of Hochschild cohomology.

\subsection{Cocycle deformations} A normalized 2-cocycle
$\sigma\colon A\ot A\to k$ on a Hopf algebra $A$ is a convolution invertible linear map such that
$$(\ep\ot\sigma )*\sigma (1\ot m)=(\sigma\ot\ep )*\sigma (m\ot 1)$$
and $\sigma (\iota\ot 1)=\ep =\sigma (1\ot\iota )$. The deformed
multiplication
$$m_{\sigma}=\sigma *m*\sigma^{-1}\colon A\ot A\to A$$
and antipode
$$s_{\sigma}=\sigma*s*\sigma^{-1}\colon A\to A$$
on $A$, together with the original unit, counit and
comultiplication define a new Hopf algebra structure on H which we
denote by $A_{\sigma}$. If $A$ is $\mathbf N$-graded then $\sigma
=\sum_{i=0}^{\infty}\sigma_i$, where $\sigma_j\colon A\ot A\to k$
is the uniquely determined component of degree $-j$ and
$\sigma_0=\ep$. This corresponds to a convolution invertible
normalized 2-cocycle
$$\sigma (t)=\sum_{i=0}^{\infty}\sigma_it^i\colon A\ot A\to k[[t]].$$
The convolution inverse
$\sigma^{-1}(t)=\sum_{i=0}^{\infty}\eta_it^i\colon A\ot A\to
k[[t]]$ is determined by $\sigma (t)*\sigma^{-1}(t)=\ep
=\sigma^{-1}(t)*\sigma (t)$, that is by
$$\sum_{i+j=l}\sigma_i*\eta_j =\delta^l_0=
\sum_{i+j=l}\eta_i*\sigma_j.$$ The cocycle condition
$(\ep\ot\sigma(t))*\sigma(t)(1\ot m)=(\sigma(t)\ot\ep
)*\sigma(t)(m\ot 1)$ implies that
$$\sum_{i+j=l}(\ep\ot\sigma_i )*\sigma_j(1\ot m)=
\sum_{i+j=l}(\sigma_i\ot\ep )*\sigma_j(m\ot 1)$$
for all $l\ge 0$. In particular, if $s$ is the least positive
integer for which $\sigma_s\ne 0$ then $\eta_s=-\sigma_s$ and
$$\ep\ot\sigma_s + \sigma_s (1\ot m)=\sigma_s\ot\ep +\sigma_s(m\ot 1)$$
so that $\sigma_s\colon A\ot A\to k$ is a Hochschild 2-cocycle.
The infinitesimal part $\mod (t^{s+1})$ of $\sigma (t)$ and of
$m_{\sigma (t)}$ are
$$\ep\ot\ep + \sigma_st^s \colon  A\ot A\to k[t]/(t^{s+1})$$
and
$$m_{\sigma (t)}=m+(\sigma_s*m-m*\sigma_s)t^s\colon A\ot A\to A[t]/(t^{s+1}),$$
respectively, where
$$\phi =\sigma_s*m-m*\sigma_s\colon A\ot A\to A$$
is a normalized Hochschild 2-cocycle.

Dually, a normalized 2-cocycle $\sigma\colon k\to A\ot A$ is a
convolution invertible linear map such that
$$(\iota\ot\sigma )*(1\ot\Delta)\sigma =(\sigma\ot\iota )*(\Delta\ot 1)\sigma $$
and
$$(\ep\ot 1)\sigma =\iota = (1\ot\ep )\sigma .$$
Then $H^{\sigma} =(H, m, \iota ,\Delta^{\sigma} ,\ep )$ with the
deformed comultiplication
$$\Delta^{\sigma}=\sigma *\Delta *\sigma^{-1}\colon A\to A\ot A$$
is again a Hopf algebra. If $A$ is $(-\mathbf N)$-graded then
$\sigma =\sum_{i=0}^{\infty}\sigma_i$, where $\sigma_j\colon A\ot
A\to k$ is the uniquely determined component of degree $-j$,
corresponds to an invertible normalized 2-cocycle $\sigma
(t)=\sum_{i=0}^{\infty}\sigma_it^i\colon k\to A\ot A[[t]]$.

\begin{Theorem} \cite[Corollary 5.9]{Sch} If two Hopf algebras
$A$ and $A'$ are cocycle deformations of each other,
then they are monoidally Morita-Takeuchi equivalent.
The converse is true if $A$ and $A'$ are finite dimensional.
\end{Theorem}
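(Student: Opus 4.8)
The statement is \cite[Corollary~5.9]{Sch}, so it would in fact suffice to cite it; but the plan, in both directions, is to pass through Schauenburg's theory of Hopf bi-Galois objects, of which the implication ``a bi-Galois object yields a monoidal Morita--Takeuchi equivalence'' (\cite[Corollary~5.7]{Sch}, already used above) is one half.

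\emph{Cocycle deformation $\Rightarrow$ monoidal equivalence.} Assume $A'\cong A_\sigma$ for a normalized convolution-invertible $2$-cocycle $\sigma\colon A\ot A\to k$; since the comultiplication is unchanged and $\sigma^{-1}$ is again a cocycle (now on $A_\sigma$), this is the same as requiring that $A$ and $A'$ be cocycle deformations of each other. First I would exhibit an explicit $(A_\sigma,A)$-bi-Galois object, namely $P={}_\sigma A$: the coalgebra $A$ equipped with the right $A$-coaction $\Delta$, the left $A_\sigma$-coaction $\Delta$, and the twisted product $a\cdot b=\sigma(a_1\ot b_1)\,a_2b_2$. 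Using the $2$-cocycle identity and normalization one checks that this product is associative and unital, that $\Delta\colon P\to P\ot A$ and $\Delta\colon P\to A_\sigma\ot P$ are algebra maps for the evident smash-type products, that both coinvariant subalgebras equal $k$, and that the two canonical Galois maps are bijective (they differ from those of $A$ only by the invertible twist by $\sigma$). By uniqueness of the left partner in a bi-Galois datum, $L(P,A)\cong A_\sigma$. Then \cite[Corollary~5.7]{Sch} gives that $P\ot^A-\colon{}^{A}\mathcal M\to{}^{A_\sigma}\mathcal M$, with monoidal structure induced by the comultiplications, is a $k$-linear monoidal equivalence, i.e.\ $A$ and $A'$ are monoidally Morita--Takeuchi equivalent.

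\emph{Monoidal equivalence $\Rightarrow$ cocycle deformation, when $A,A'$ are finite dimensional.} Start from a $k$-linear monoidal equivalence $F\colon{}^{A}\mathcal M\to{}^{A'}\mathcal M$ and compose it with the forgetful fibre functor of ${}^{A'}\mathcal M$ to obtain a fibre functor $\omega\colon{}^{A}\mathcal M\to\Vect$. The bi-Galois reconstruction (the converse content of Schauenburg's theory) attaches to the pair $(\omega,\text{forgetful})$ on ${}^{A}\mathcal M$ an $(A',A)$-bi-Galois object $P$ with $L(P,A)\cong A'$. Because $\dim_k P=\dim_k A<\infty$, $P$ is a finite-dimensional right $A$-Galois object over the ground field, hence cleft (Kreimer--Takeuchi; Doi--Takeuchi): there is a convolution-invertible right $A$-comodule map $A\to P$. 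A cleft comodule algebra over $k$ is a crossed product with trivial action, i.e.\ $P\cong{}_\sigma A$ for some normalized $2$-cocycle $\sigma\colon A\ot A\to k$; combined with the identification from the first half, $A'\cong L(P,A)\cong L({}_\sigma A,A)\cong A_\sigma$. Thus $A'$ is a cocycle deformation of $A$, and (using finite-dimensionality of $A'$ symmetrically, or twisting back by $\sigma^{-1}$) $A$ and $A'$ are cocycle deformations of each other.

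\emph{Where the difficulty lies.} The forward direction is essentially a bookkeeping verification on ${}_\sigma A$ together with an appeal to \cite[Corollary~5.7]{Sch}. The substantial point is in the converse: turning an abstract monoidal equivalence of comodule categories into a bi-Galois object is the Tannakian core of Schauenburg's work and is not formal, while finite-dimensionality enters exactly once, to guarantee that the resulting $A$-Galois object is cleft and hence genuinely of the shape ${}_\sigma A$.
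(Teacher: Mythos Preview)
The paper does not give its own proof of this theorem; it simply cites \cite[Corollary~5.9]{Sch} and moves on. Your opening remark that ``it would in fact suffice to cite it'' is therefore exactly what the paper does, and your sketch of the underlying argument via bi-Galois objects and cleftness (Kreimer--Takeuchi in the finite-dimensional case) is a correct outline of Schauenburg's own proof.
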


Suppose now that $V$ is a crossed $kG$-module of special finite
Cartan type, $\cal A(V)$ the free braided algebra and $\cal A(V)\#
kG$ its bosonization. If $I$ is the ideal of $\cal A(V)$ generated
by the subset
$$S=\setst{ad^{1-a_{ij}}x_i(x_j)}{i\sim j}\cup
\setst{x_{\alpha}^{N_{\alpha}}}{\alpha\in\Phi^+}\cup\setst{
[x_i,x_j]_c}{i\not\sim j}$$ then $\cal A(V)/I=\cal B(V)$ is the
Nichols algebra. The subalgebra $K$ of $\cal A(V)$ generated by
$S$ is a Hopf subalgebra \cite{AS}, \cite[Proposition 9.2.1]{CP}.
Then $K\# kG$ is the Hopf subalgebra of $\cal A(V)\# kG$ generated
by $S$ and $G$.

\begin{Lemma} The injective group homomorphism
$$\phi \colon \Alg_G(K,k)\to \Alg (K\#kG,k)$$ given by $\phi (f)(x\#
g)=f(x)$ has image $$\widetilde{\Alg (K\#kG,k)}=\setst{f\in
\Alg(K\#kG,k)}{f_{|kG}=\ep}$$ and
$$\adj\colon \Alg_G(K,k)\to \Aut(K\#kG)$$
has its image in the subgroup $$\widetilde{Aut(K\#kG)}=\setst{
f\in\Aut (K\#kG)}{f_{|kG}=\ep }.$$ Moreover, if $V$ is of special
finite Cartan type then $f(ad^{1-a_{ij}}x_i(x_j))=0$ for $i\sim j$
and for every $f\in\Alg_G(K,k)$.
\end{Lemma}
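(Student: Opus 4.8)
The plan is to derive the first two assertions from Proposition \ref{invariant} and to settle the ``moreover'' by an isotypic-component argument. For the first two: since $K$ is a braided Hopf subalgebra of $\mathcal A(V)$ it is a Hopf algebra in the braided category of crossed $kG$-modules, and because $\ep(g)=1$ for every $g\in G$ the notion of a $kG$-invariant algebra map reduces to that of a $G$-invariant one. Under this identification $\phi$ is exactly the isomorphism $\res^{-1}\colon f\mapsto f\ot\ep$ of Proposition \ref{invariant}(2) and $\adj$ is the map $\rho\circ\res^{-1}$ of its item (3), so both are group homomorphisms and $\phi$ is injective since $\phi(f)(x\#1)=f(x)$. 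To pin down the image of $\phi$ I would note that $\phi(f)_{|kG}=\ep$ and, conversely, that any $F\in\Alg(K\#kG,k)$ with $F_{|kG}=\ep$ equals $\phi\bigl(F(-\#1)\bigr)$: the map $F(-\#1)\colon K\to k$ is an algebra map which is $G$-invariant because $(1\#g)(x\#1)(1\#g^{-1})=(g\cdot x)\#1$, and $F(x\#g)=F(x\#1)F(1\#g)=F(x\#1)$. For the image of $\adj$ I would compute, using that $g\in G$ is grouplike, $\adj(f)(1\#g)=\phi(f)(1\#g)\,(1\#g)\,\phi(f)(1\#g^{-1})=1\#g$, so $\adj(f)$ restricts to the identity on $kG$.

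For the last assertion I would fix $i\sim j$, so that $\alpha_i$ and $\alpha_j$ lie in one connected component $J$ of the Dynkin diagram. Then $\mathcal D_J$ is a connected datum of finite Cartan type in which $q_{ii}$ has odd order and, if $J$ is of type $G_2$, order prime to $3$; hence \cite[Lemma 2.3]{AS} applies to $\mathcal D_J$ and, as already recorded in Section \ref{s3}, this forces $\chi_i^{1-a_{ij}}\chi_j\ne\ep$. Now $u_{ij}=\ad^{1-a_{ij}}x_i(x_j)$ is $\mathbf Z[I]$-homogeneous of degree $(1-a_{ij})\alpha_i+\alpha_j$ in $\mathcal A(V)$ (in fact a primitive there, by \cite[Appendix 1]{AS1}), so it lies in $\mathcal A(V)_{h}^{\chi}$ with $\chi=\chi_i^{1-a_{ij}}\chi_j$ and $h=g_i^{1-a_{ij}}g_j$; this $G$-grading and $G$-action are inherited by the crossed $kG$-submodule $K$. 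Hence for every $f\in\Alg_G(K,k)$ and every $g\in G$ one has $f(u_{ij})=f(g\cdot u_{ij})=\chi(g)\,f(u_{ij})$, and choosing $g$ with $\chi(g)\ne1$ yields $f(u_{ij})=0$.

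The only non-formal step, and hence the main obstacle, is the nontriviality $\chi_i^{1-a_{ij}}\chi_j\ne\ep$: this is precisely where the Cartan condition and the odd-order hypothesis on $\mathcal D$ are used, while everything else is an unwinding of Proposition \ref{invariant} together with multiplicativity of the $\mathbf Z[I]$-grading on $\mathcal A(V)$ and the formula for the bosonization product. In the not-necessarily-connected situation the one extra observation needed is the reduction to the connected component containing $i$ and $j$, after which \cite[Lemma 2.3]{AS} applies verbatim.
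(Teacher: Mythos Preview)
Your proposal is correct and follows essentially the same route as the paper. The paper's own proof in fact only spells out the ``moreover'' clause via the conjugation identity $f(g\cdot u_{ij}\,g^{-1})=\chi_i(g)^{1-a_{ij}}\chi_j(g)f(u_{ij})$ together with $\chi_i^{1-a_{ij}}\chi_j\ne\ep$, leaving the first two assertions implicit as instances of Proposition~\ref{invariant}; your write-up makes those reductions explicit and adds the harmless observation that one may restrict to the connected component containing $i,j$ before invoking \cite[Lemma~2.3]{AS}.
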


\begin{proof} If $f\in \Alg (K\#kG,k)$ then
$f(ad^{1-a_{ij}}x_i(x_j))= f(g\cdot
ad^{1-a_{ij}}x_i(x_j)g^{-1})=\chi_i(g)^{1-a_{ij}}\chi_j(g)$,
$f(x_{\alpha}^N)=f(gx_{\alpha}^{N_{\alpha}}g^{-1})
=\chi_{\alpha}^{N_{\alpha}}(g)f(x_{\alpha}^N)$ and
$f([x_i,x_j]_c)=f(g[x_i,x_j]_cg^{-1})=\chi_i(g)\chi_j(g)f([x_i,x_j]_c)$,
so that $f(g\cdot ad^{1-a_{ij}}x_i(x_j))=0$ if
$\chi_i^{1-a_{ij}}\chi_j\ne 0$, $f(x_{\alpha}^{N_{\alpha}})=0$ if
$\chi_{\alpha}^{N_{\alpha}}\ne\ep $ and $f([x_i,x_j]_c)=0$ if
$\chi_i\chi_j\ne\ep$.
\end{proof}

The theorem above can now be applied to the situation in Section
\ref{s3} to show that all \lq liftings'  of a crossed $kG$-module
of special finite Cartan type are cocycle deformations of each
other. The special case of quantum linear spaces has been studied
by Masuoka \cite{Ma}, and that of a crossed $kG$-module
corresponding to a finite number of copies of type $A_n$ by Didt
\cite{Di}.

\begin{Theorem} Let $G$ be a finite abelian group, $V$ a crossed
$kG$-module of special finite Cartan type, $\mathcal B(V)$
its Nichols algebra with bosonization $A=\mathcal B(V)\#kG$. Then:
\begin{enumerate}
\item All liftings of $A$ are monoidally Morita-Takeuchi equivalent,
i.e: their comodule categories are monoidally equivalent, or equivalently,
\item all liftings of $A$ are cocycle deformations of each other.
\end{enumerate}
\end{Theorem}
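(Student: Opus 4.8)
The plan is to deduce both assertions from material already established, since the real work sits in Theorem~\ref{main3} and in the preceding analysis of $K(\mathcal D)\#kG$.

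First I would identify the liftings of $A=\mathcal B(V)\#kG$ with the Hopf algebras $U(\mathcal D,f)$, where $\mathcal D$ is the special datum of finite Cartan type underlying $V$ and $f$ ranges over $\Vect_G(L(\mathcal D),k)$. Each $U(\mathcal D,f)$ is a lifting by construction (its associated graded Hopf algebra is $\mathcal B(V)\#kG$), and conversely the classification of Andruskiewitsch and Schneider \cite{AS} shows that every lifting is obtained from a choice of root vector parameters and linking parameters, i.e.\ from the data $\mu_\alpha=f(x_\alpha^{N_\alpha})$ and $\lambda_{ij}=f([x_i,x_j]_c)$ recorded by such an $f$. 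This reduces item (1) to exactly the last assertion of Theorem~\ref{main3}.

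Item (1) then follows directly from Theorem~\ref{main3}: applying Theorem~\ref{po} with $H=kG$ and $K=K(\mathcal D)$, the Hopf ideals $I$ and $J=\Theta(f)(I)$ of $K(\mathcal D)\#kG$ are conjugate via $\res^{-1}(f)$, so $U(\mathcal D,\ep)=A$ and $U(\mathcal D,f)$ are quotients of $R(\mathcal D)\#kG$ by conjugate Hopf ideals and hence have monoidally equivalent comodule categories. For item (2), observe that $A$ and all of its liftings are finite dimensional: $G$ is finite and $\mathcal B(V)$ is finite dimensional for a datum of finite Cartan type, while a lifting has the dimension of its associated graded algebra. Therefore the finite-dimensional case of Schauenburg's theorem \cite[Corollary 5.9]{Sch}, quoted above, applies and gives the equivalence of (1) and (2): for finite dimensional Hopf algebras, monoidal Morita-Takeuchi equivalence is the same as being cocycle deformations of one another.

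The one point where I would expect to have to exercise care rather than cite a black box is the supplementary hypothesis of the corrected pushout theorem, namely that $(R(\mathcal D)\#kG)/(\res^{-1}(f)*(I\#kG))\neq 0$, equivalently that the putative biGalois object does not collapse. Here I would follow the argument in the appendix of \cite{Ma2}, or check directly that $\res^{-1}(f)*(I\#kG)$ is a proper ideal, for instance by producing a nonzero algebra map out of the quotient or by a filtration/dimension count exploiting the $\mathbf Z[I]$-grading. Granting this, the remainder of the proof is bookkeeping.
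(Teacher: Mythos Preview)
Your proposal is correct and follows essentially the same route as the paper: reduce item (1) to Theorem~\ref{main3} (liftings are the $U(\mathcal D,f)$, conjugate ideals in $K(\mathcal D)\#kG$, Masuoka's pushout with the nonvanishing condition handled via \cite{Ma2}), and then invoke the finite-dimensional direction of Schauenburg's theorem to get item (2). Your explicit remark that $A$ and its liftings are finite dimensional is a point the paper leaves implicit but is indeed needed for the converse in \cite[Corollary 5.9]{Sch}.
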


\begin{proof} The main theorem \ref{main3} at the end the last section says that
$\mathcal B(V)\#kG\cong U(\mathcal D, \ep )\cong R(\mathcal
D)\#kG/(I)$ for a Hopf ideal $I$ in the Hopf subalgebra
$K(\mathcal D)\#kG$ of $R(\mathcal D)\#kG$, that its liftings are
of the form $U(\mathcal D,f)\cong R(\mathcal D)\#kG/(I_f)$ for a
conjugate Hopf ideal $I_f$, where $f\in\Alg_G(K(\mathcal D,k),
k)\cong \widetilde{\Alg}(K(\mathcal D)\#kG,k)$, and that they are
all Morita-Takeuchi equivalent. Thus, Schauenburg's result
applies, so that all these liftings are cocycle deformations of
each other.
\end{proof}

\begin{Corollary} Let $H$ be a finite dimensional pointed Hopf algebra
with abelian group of points $G(H)=G$ and assume that the order of $G$ has no
prime divisors $< 11$. Then:
\begin{itemize}
\item $H$ and $\gr_c(H)$ are Morita Takeuchi equivalent, or equivalently,
\item $H$ is a cocycle deformation of $\gr_c(H)$.
\end{itemize}
\end{Corollary}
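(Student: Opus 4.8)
The plan is to deduce this from the Theorem proved immediately above, once the hypothesis on $G$ is translated into the language of Section~\ref{s3}. First I would invoke the classification of Andruskiewitsch and Schneider \cite{AS}: if $H$ is a finite dimensional pointed Hopf algebra whose group of points $G$ is abelian and $|G|$ has no prime divisor $<11$, then $\gr_c H\cong\mathcal B(V)\#kG$ for a crossed $kG$-module $V$, and the associated datum $\mathcal D$ is of finite Cartan type. The condition that $2,3,5,7\nmid|G|$ forces each $q_{ii}$ to have odd order $N_i$ and, in any connected component of type $G_2$, order prime to $3$; thus $\mathcal D$ is a \emph{special} datum of finite Cartan type in the sense of Section~\ref{s3}. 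Consequently $H$ is a lifting of $A=\mathcal B(V)\#kG$, that is $H\cong U(\mathcal D,f)$ for some $f\in\Vect_G(L(\mathcal D),k)$ by Theorem~\ref{main3}, while $\gr_c H\cong U(\mathcal D,\ep)=A$ is the trivial lifting.

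Second, I would apply the Theorem on liftings of $A=\mathcal B(V)\#kG$ stated just before this Corollary: all liftings of $A$ are monoidally Morita--Takeuchi equivalent, hence---since $H$ and $\gr_c H$ are finite dimensional---cocycle deformations of each other by \cite[Corollary~5.9]{Sch}. Because both $H$ and $\gr_c H$ occur among the liftings of the single Hopf algebra $A$, this yields simultaneously that $H$ and $\gr_c H$ are Morita--Takeuchi equivalent and that $H$ is a cocycle deformation of $\gr_c H$; the equivalence of the two bulleted assertions is Schauenburg's theorem in the finite dimensional case, where a cocycle deformation is monoidally Morita--Takeuchi equivalent and, conversely, a finite dimensional monoidally Morita--Takeuchi equivalent Hopf algebra is a cocycle deformation.

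The only step that is not purely formal, and the one I expect to be the main obstacle, is the first: verifying that the arithmetic hypothesis on $|G|$ really does land $\mathcal D$ in the class of special data of finite Cartan type handled in Section~\ref{s3}, so that Theorem~\ref{main3} applies with $\mathcal B(V)\#kG$ playing the role of $U(\mathcal D,\ep)$. This is exactly the content of the classification in \cite{AS}: excluding the primes $2,3,5,7$ both guarantees that every such $H$ is a lifting of some $\mathcal B(V)\#kG$ with $V$ of finite Cartan type and ensures the oddness of the orders $N_i$ together with coprimality to $3$ in type $G_2$, which is the definition of a special datum. Granting this, the remainder is a direct appeal to Theorem~\ref{main3} and to \cite{Sch}.
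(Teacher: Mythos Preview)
Your proposal is correct and follows essentially the same approach as the paper: invoke the Andruskiewitsch--Schneider classification \cite{AS} to conclude that $\gr_c H\cong\mathcal B(V)\#kG$ with $V$ of special finite Cartan type, and then apply the preceding Theorem. The paper's proof is a two-line version of exactly this; your additional remarks on why the arithmetic hypothesis forces the datum to be special are accurate elaborations of what the paper leaves implicit in its citation of \cite{AS}.
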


\begin{proof} Under the present assumptions the Classification Theorem \cite{AS}
asserts that $\gr_c(H)\cong B(V)\# kG$ for a crossed $kG$-module
$V$ of special finite Cartan type, and hence the previous theorem
applies.
\end{proof}

In the case at hand $A=B(V)\#kG$ and the condition that
$\gr_cA^{\sigma}\cong A$ implies that the cocycle $\sigma\colon
A\ot A\to k$ is  $G$-invariant, since $m_{\sigma}(x\ot g)=m(x\ot
g)$ and $m_{\sigma}(x\ot g)=m(x\ot g)$ for all $g\in G$, so that
$$\sigma (x\ot g)=\ep (x) =\sigma (g\ot x)$$
for all $g\in G$. The cocycle conditions then imply that
$$\sigma(x\ot yg)=\sigma(x\ot y) \ ,\ \sigma(xg\ot y)=
\sigma(x\ot gy) \ ,\ \sigma(gx\ot y)=\sigma(x\ot y),$$ which means
that $\sigma$ factors through $A\ot_{kG}A$ and also that the
cocycle really comes from a convolution invertible $G$-invariant
2-cocycle
$$\nu\colon B(V)\ot B(V)\to k.$$
In fact, the restriction of a $G$-invariant 2-cocycle $\sigma
\colon A\ot A\to k$ restricts to a $G$ invariant 2-cocycle on
$B(V)\ot B(V)$ and the map
$$\Psi\colon Z_G^2(B(V),k)\to Z_G^2(A,k),$$
defined by $\Psi (\nu )(x\# g\ot x'\# g')=\nu (x\ot g(x'))$, is
inverse to the restriction map. This map is of degree zero and
therefore also defines a bijection between the associated sets of
formal cocycles
$$\Psi\colon Z_G^2(B(V),k[[t]])\to Z_G^2(A,k[[t]]),$$
and the infinitesimal parts, which are Hochschild cocycles.

\subsection{Exponential map} It is in general very hard to
give explicit examples of multiplicative cocycles. One somewhat
accessible family consists of bicharacters. Below we give another
idea which can sometimes be used.

Note that if $B=\oplus_{n=0}^\infty B_n$ is a graded bialgebra, and
$f\colon B\to k$ is a linear map such that $f|_{B_0}=0$, then
$$
e^f=\sum_{i=0}^\infty \frac{f^{* i}}{i!}\colon B\to k
$$
is a well defined convolution invertible map with convolution
inverse $e^{-f}$. When $f\colon B\otimes B\to k$ is a Hochschild
cocycle such that $f|_{B\ot B_0+B_0\ot B}=0$, then `often'
$e^f\colon B\ot B\to k$ will be a multiplicative cocycle. For
instance this happens whenever $f(1\ot m)$ and $f(m\ot 1)$ commute
(with respect to the convolution product) with $\ep\ot f$ and
$f\ot \ep$, respectively. Also note that if $f*f=0$, then
$e^f=\ep+f$.

From now on assume $B$ is obtained as a bosonization of a quantum
linear space. More precisely $B=\genst{G, x_1,\ldots ,
x_\theta}{gx_i =\chi_i(g) x_i g, x_ix_j = \chi_j(g_i) x_j x_i,
x_i^{N_i}=0}$. Here $\chi_1,\ldots ,
\chi_\theta\in\widehat G$, $g_1,\ldots , g_\theta\in\Gamma$
are such that $\chi_i(g_j)\chi_j(g_i)=1$ for $i\not=j$. Number
$N_i$ is the order of $\chi_i(g_i)$. We abbreviate
$q_{i,j}=\chi_i(g_j)$. Then $\zeta_i\colon B\ot B\to k$, given by
$$\zeta_i(xg, yh)=\begin{cases}\chi_i^{b_i}(g) ,& \mbox{ if }
x=x_i^{a_i}, y=x_i^{b_i}, a_i+b_i=N_i \\
0 ,& \mbox{ otherwise }\end{cases}$$ for $x=x_1^{a_1}\ldots
x_\theta^{a_\theta}$ and $y=x_1^{b_1}\ldots x_\theta^{b_\theta}$
(see Corollary \ref{c46}) is a Hochschild cocycle. Moreover, each
of the sets $$A_l=\setst{(\ep\ot\zeta_i), \zeta_i(1\ot m)}{1\le
i\le\theta}$$ and
$$A_r=\setst{(\zeta_i\ot \ep), \zeta_i(m\ot 1)}{1\le
i\le\theta}$$ is a commutative set (for the convolution product). We
sketch the proof for $A_l$ (the proof for $A_r$ is symmetric).
Maps $\zeta_i(1\ot m)$ and $\zeta_j(1\ot m)$ commute since
$\zeta_i$ and $\zeta_j$ do. Same goes for $\ep\ot\zeta_i$ and
$\ep\ot\zeta_j$. Hence it is sufficient to prove that for all
$i,j$ we have
$$
(\ep\ot\zeta_i)*(\zeta_j(1\ot m))=  (\zeta_j(1\ot
m))*(\ep\ot\zeta_i).
$$
If $i\not=j$, this is immediate. For $i=j$ note that both left and
right hand side can be nonzero only at PBW elements of the form
$x_i^r f\ot x_i^s g\ot x_i^p h \in B\ot B\ot B$, with
$r+s+p=2N_i$. Without loss of generality assume that $f=g=h=1$. In
this case the left hand side evaluates to
\begin{eqnarray*}
\sum_{u+v=N_i} {s\choose u}_{q_{ii}} {p\choose v}_{q_{ii}}
q_{ii}^{u(p-v)} = 1
\end{eqnarray*}
and the right hand side is
\begin{eqnarray*}
\sum_{u+v=N_i-r} {s\choose u}_{q_{ii}} {p\choose v}_{q_{ii}}
q_{ii}^{u(p-v)} = 1.
\end{eqnarray*}
Thus if $f$ is any map in the linear span $Span_k \set{\zeta_i}$,
then $e^f$ is a multiplicative cocycle.

This idea is illustrated in some of the examples given in Section
\ref{ex1}.

\subsection{The standard cosimplicial algebra complex and
cohomology of braided Hopf algebras} The `multiplicative' cocycles
above and the `additive' Hochschild cocycles can in principle be
computed from the normalized standard cosimplicial complex
associated with the standard comonad $A\ot -$ on the category of
$A$-bimodules. The relevant part of that complex with coefficients
in the $A$-bimodule $M$ is
$$\Hom(k,M)^{\del_0\atop{\longrightarrow}}_{\del_1
\atop{\longrightarrow}}
\Hom(A,M)^{{\del_0\atop{\longrightarrow}}\atop{\del_1
\atop{\longrightarrow}}}_{\del_2\atop{\longrightarrow}}
\Hom(A^2,M)^{{\del_0\atop{\longrightarrow}}\atop{\del_1
\atop{\longrightarrow}}}_{{\del_2\atop{\longrightarrow}}
\atop{\del_3\atop{\longrightarrow}}}
\Hom(A^3,M)$$
with coface maps $\del_i\colon \Hom (A^n,M)\to\Hom
(A^{n+1},M)$ given by
$$\del_i(f) = \begin{cases}  \mu_l(1\ot f) & \rm{, if}\ i=0 \\
f(1^{i-1}\ot m\ot 1^{n-i-1}) & \rm{, if}\ 1\le i\le n-1 \\
(f\ot 1)\mu_r & \rm{, if}\ i=n
\end{cases}$$
and codegeneracy maps $s_i\colon \Hom (A^{n+1},M)\to\Hom (A^n,M)$,
$s_if=f(1^i\ot\iota\ot 1^{n-i})$, where $\iota\colon k\to A$ is
the unit. Hochschild (or the 'additive') cohomology $H^*(R,M)$ is
the cohomology of the associated cochain complex with the
alternating sum differentials $\del
=\sum_{i=0}^n(-1)^i\del_i\colon \Hom (A^n,M)\to \Hom (A^{n+1},M)$,
so that
\begin{eqnarray*}
\partial f(a_1\ot\ldots\ot a_{n+1}) &=& a_1f(a_2\ot\ldots\ot a_{n+1}) \\
&+&\sum_{i+1}^n(-1)^if(a_1\ot\ldots\ot a_ia_{i+1}\ot\ldots\ot a_{n+1}) \\
&+&(-1)^{n+1}f(a_1\ot\ldots\ot a_n)a_{n+1}.
\end{eqnarray*}

If $M=k$, the trivial $A$-bimodule, then the cosimplicial complex
is a cosimplicial algebra under convolution. Apply the group of
units functor to this cosimplicial algebra to get a generally
non-abelian cosimplicial group. Then
$$Z^1(A,k)=\setst{f\in\Hom (A,k)}{\del_2(f)*\del_0(f)=\del_1(f)} =\Alg (A,k)$$
is the group of `multiplicative' 1-cocycles, while
\begin{eqnarray*}
Z^2(A,k) & = & \setst{f\in\Hom (A\ot A,k)}{\del_3(f)*\del_1(f)=\del_0(f)*\del_2(f)} \\
 & = & \setst{ f\in\Hom (A^2,k)}{f(x_1,y_1)f(x_2y_2,z)=f(y_1,z_1)f(x,y_2z_2)}
\end{eqnarray*}
is the set of `multiplicative' 2-cocycles. In case $A$ is
cocommutative, the cosimplicial group is abelian and from the
associated cochain complex with the alternating convolution
product differentials one gets Sweedler cohomology.

This theory also works for a braided algebra in the category of
crossed $H$-modules when the tensor products are taken in the
braided sense.

\begin{Proposition} If $A$ and $A'$ are finite dimensional (braided) algebras then
$$H^*(A,M)\ot H^*(A',M')\cong H^*(A\ot A',M\ot M')$$
\end{Proposition}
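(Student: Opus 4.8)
The plan is to reduce the statement to the classical Künneth theorems over the field $k$ by interpreting Hochschild cohomology as an $\Ext$-functor over the enveloping algebra and then choosing resolutions that behave well under tensor product. As recalled in the excerpt, $H^n(A,M)\cong\Ext^n_{A^e}(A,M)$ where $A^e=A\ot A^{op}$, and the same holds for $A'$ and for $A\ot A'$, with all constructions read in the braided sense when $A,A'$ are braided algebras in $\YD^H_H$. The first step is to identify $(A\ot A')^e\cong A^e\ot(A')^e$ (reorder the four tensor factors using the braiding) and to observe that, under this identification, $A\ot A'$ is the external tensor product of the $A^e$-module $A$ with the $(A')^e$-module $A'$, and $M\ot M'$ is the external tensor product of the two bimodules. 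Thus the left-hand side of the claim becomes $\Ext^*_{A^e\ot(A')^e}(A\ot A',\,M\ot M')$.

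The heart of the matter is to compute this $\Ext$ with a resolution that splits as a tensor product. Let $\mathbf{B}(A)\to A$ and $\mathbf{B}(A')\to A'$ be the bar resolutions over $A^e$ and $(A')^e$; each term $\mathbf{B}(A)_n\cong A^e\ot A^{\ot n}$ is a free $A^e$-module on the finite-dimensional space $A^{\ot n}$, so \emph{finite dimensionality of $A$ (and of $A'$) is exactly what makes these terms finitely generated free}. Then $\mathbf{B}(A)\ot\mathbf{B}(A')$ is a complex of finitely generated projective $A^e\ot(A')^e$-modules, and by the Künneth theorem for chain complexes over the field $k$ — where no $\Tor$ terms appear — its homology is $H_*(\mathbf{B}(A))\ot H_*(\mathbf{B}(A'))=A\ot A'$ concentrated in degree $0$. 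Hence $\mathbf{B}(A)\ot\mathbf{B}(A')$ is a projective resolution of $A\ot A'$ over $(A\ot A')^e$ and may be used to compute the $\Ext$ groups in question.

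Next I would apply $\Hom_{(A\ot A')^e}(-,\,M\ot M')$ and invoke the Hom–tensor interchange $\Hom_{A^e}(P,M)\ot\Hom_{(A')^e}(P',M')\cong\Hom_{A^e\ot(A')^e}(P\ot P',\,M\ot M')$, valid whenever $P,P'$ are finitely generated projective (check it on free modules, extend by retracts; note that $M,M'$ need not be finite dimensional). Applied degreewise this identifies the cochain complex $\Hom_{(A\ot A')^e}(\mathbf{B}(A)\ot\mathbf{B}(A'),\,M\ot M')$ with the tensor product of the Hochschild cochain complexes $C^\bullet(A,M)\ot C^\bullet(A',M')$, compatibly with the differentials. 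A final use of the Künneth theorem for bounded-below cochain complexes over $k$ yields $H^*\!\big(C^\bullet(A,M)\ot C^\bullet(A',M')\big)\cong H^*(A,M)\ot H^*(A',M')$, and composing all the isomorphisms gives the asserted one; concretely it is the external product $H^p(A,M)\ot H^q(A',M')\to H^{p+q}(A\ot A',\,M\ot M')$ induced by $f\ot f'\mapsto (f\ot f')\circ(\text{shuffle of tensor factors})$.

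For braided algebras the argument runs verbatim, because the forgetful functor to $\Vect_k$ is strict monoidal: the enveloping algebras, the bar resolutions and the bimodule structures are built using the braiding, but the \emph{underlying vector spaces} of all the complexes are unchanged, so both Künneth theorems and the Hom–tensor interchange hold over $k$ exactly as before. The step I expect to cost the most care is not any of the Künneth inputs but the verification that the Hom–tensor interchange is a genuine map of complexes (not merely a degreewise isomorphism) and that, in the braided setting, the braiding-twisted differentials and module actions introduce no extra factors obstructing this — together with the bookkeeping that $\mathbf{B}(A)\ot\mathbf{B}(A')$ carries the correct $(A\ot A')^e$-action making it a resolution in the appropriate (braided) module category.
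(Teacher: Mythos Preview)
Your argument is correct and follows essentially the same route as the paper: compare the bar resolution of $A\ot A'$ with the tensor product of the bar resolutions of $A$ and $A'$, then apply K\"unneth over the field $k$. The paper phrases things in the bimodule language (using the comparison theorem and the Alexander--Whitney map rather than directly checking that $\mathbf{B}(A)\ot\mathbf{B}(A')$ is a projective resolution), and notes that in fact finite dimensionality of \emph{either} $A$ or $A'$ suffices for the K\"unneth map to be an isomorphism, but otherwise the strategies coincide.
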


\begin{proof} The Bar-Resolution $B(A,M)$ of the $A$-bimodule $M$,
with differential
$$d\colon B_{n+1}(A,M)=A\ot A^n\ot M\to A\ot A^{n-1}\ot M= B_n(A,M)$$
given by
$$d(a_0\ot a_1\ot\ldots\ot a_n\ot m)=\sum_{i=0}^n(-1)^ia_0\ot
\ldots\ot a_{i-1}\ot a_ia_{i+1}\ot a_{i+2}\ot\ldots\ot a_n\ot m,$$
is a $k$-split relatively free resolution of $M$. The Hochschild
cohomology of $A$ with coefficients in $M$ is defined by
$H^*(A,M)=H^*({_A\Hom_A}(B(A,A),M))$. Both, $B(A,M)\ot B(A',M')$
and $B(A\ot A',M\ot M')$ are $k$-split relatively free resolutions
of the $A\ot A'$-bimodule $M\ot M'$. By the comparison theorem the
two chain complexes are  chain equivalent. Such a chain
equivalence is given by the (braided) version of the
Alexander-Whitney map. By the (braided) version of the Kunneth
Theorem there is a natural map
$$H^*B(A,M)\ot H^*B(A',M')\to H^*(B(A,M)\ot B(A',M'))\cong H^*B(A\ot A',M\ot M')$$
which is an isomorphism when either $A$ or $A'$ is finite dimensional.
\end{proof}

This result can be applied to Nichols algebras of certain finite
dimensional Yetter-Drinfel'd modules over abelian groups.

\begin{Theorem} If $V=\oplus_{J\in\mathcal X}V_J$ is the crossed
$kG$-module of a special datum $\mathcal D$ of finite Cartan type,
where $\mathcal X$is the set of connected components of the Dynkin
diagram, then
$$B(V)\cong\ot_{J\in\mathcal X}B(V_J)$$
as a braided Hopf algebra and
$$H^*(B(V),k)\cong \ot_{J\in\mathcal X}H^*(B(V_J),k)$$
as a graded vector space.
\end{Theorem}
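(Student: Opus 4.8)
The plan is to prove the braided Hopf algebra decomposition $B(V)\cong\ot_{J\in\mathcal X}B(V_J)$ first, and then read off the cohomological statement from the Kunneth-type Proposition proved just above. Recall the presentation of $B(V)=\mathcal A(V)/I$ used in Section~\ref{s3}: among the generators of $I$, the Serre relations $\ad^{1-a_{ij}}x_i(x_j)$ ($i\sim j$) and the root-vector powers $x_\alpha^{N_\alpha}$ ($\alpha\in\Phi^+$) only involve generators from a single component, so the sole relations coupling two distinct components $J\ne J'$ are $[x_i,x_j]_c=0$, i.e.\ $x_ix_j=q_{ij}x_jx_i$, for $i\in J$, $j\in J'$. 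These are consistent precisely because $q_{ij}q_{ji}=q_{ii}^{a_{ij}}=1$ whenever $a_{ij}=0$ (the Cartan condition, valid for $i\not\sim j$), which is the same as saying $c^2=\mathrm{id}$ on $V_J\ot V_{J'}$. Each $B(V_J)=\langle x_i:i\in J\rangle$ sits inside $B(V)$ as a braided Hopf subalgebra: the defining relations of $B(V_J)$ (Serre relations and powers $x_\alpha^{N_\alpha}$, $\alpha\in\Phi_J^+$) all hold in $B(V)$, so $B(V_J)$ surjects onto its image, and the surjection is injective because the root-vector monomials supported on $\Phi_J^+\subset\Phi^+$ are part of a PBW basis of $B(V)$ by Theorem~\ref{ASth2.6}.

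The relations $x_ix_j=q_{ij}x_jx_i$ propagate from generators (the $\mathbf Z[I]$-homogeneous components of $B(V)$ are $G$-isotypic) to show that for $J\ne J'$ the subalgebras $B(V_J)$ and $B(V_{J'})$ $c$-commute inside $B(V)$. Hence, by the universal property of the braided tensor product — note that $c^2=\mathrm{id}$ across distinct components, so the order of the factors does not matter — one obtains a morphism of braided algebras $\Phi\colon\ot_{J\in\mathcal X}B(V_J)\to B(V)$ sending $\ot_J b_J$ to the product $\prod_J b_J$; it is also a morphism of braided coalgebras, hence of braided Hopf algebras, being an algebra map that agrees with $\Delta$ on the algebra generators $x_i$. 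To see $\Phi$ is bijective, fix a reduced decomposition $w_0=\prod_{J\in\mathcal X}w_0^J$ of the longest element of $W=\prod_J W_J$ obtained by concatenating reduced decompositions of the $w_0^J$ in a fixed order of $\mathcal X$. Since $W_J$ fixes $\Phi_{J'}$ for $J'\ne J$, the resulting convex order on $\Phi^+=\bigsqcup_J\Phi_J^+$ lists the positive roots grouped by component, each block being a convex order of $\Phi_J^+$ whose associated root vectors are those of $R(\mathcal D_J)$. Thus $\Phi$ carries the tensor product of the truncated PBW bases of the $B(V_J)$ — a basis of the source — bijectively onto the truncated PBW basis $\setst{x_{\beta_1}^{a_1}\cdots x_{\beta_p}^{a_p}}{0\le a_l<N_{\beta_l}}$ of $B(V)$ for this convex order, and is therefore an isomorphism. (Alternatively, one may invoke the known fact that a finite-dimensional Nichols algebra of diagonal type is the braided tensor product of the Nichols algebras of the connected components of its generalized Dynkin diagram.)

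The cohomological statement then follows formally. Each $B(V_J)$ is finite dimensional (its PBW basis is finite), so the Proposition preceding this Theorem applies with $M=M'=k$ the trivial bimodules, for which $k\ot k\cong k$: it yields $H^*(B(V_J),k)\ot H^*(B(V_{J'}),k)\cong H^*(B(V_J)\ot B(V_{J'}),k)$ for the braided tensor product. Iterating over the finite set $\mathcal X$ (at each step one of the two tensor factors is finite dimensional) and substituting the isomorphism $B(V)\cong\ot_J B(V_J)$ just established, one obtains $H^*(B(V),k)\cong\ot_{J\in\mathcal X}H^*(B(V_J),k)$ as graded vector spaces.

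I expect the real work to be concentrated in the bijectivity of $\Phi$: checking that the truncated PBW monomials of $B(V)$ factor across the components in the chosen convex order. This is exactly the point at which the structure theory of \cite{AS} — the PBW theorem for $R(\mathcal D)$ and the fact that the only relations linking distinct components are the $c$-commutativity relations $[x_i,x_j]_c=0$ — is indispensable, and it is where the Cartan hypothesis really enters, via $a_{ij}=0$ and hence $q_{ij}q_{ji}=1$ across components. Everything else, including the passage to cohomology, is formal.
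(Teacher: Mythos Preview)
Your argument is correct. The paper itself offers no proof: the theorem is followed immediately by \qed, so the authors are treating the decomposition $B(V)\cong\ot_{J}B(V_J)$ as known (it is standard for Nichols algebras of diagonal type once one has the $[x_i,x_j]_c=0$ relations across components) and the cohomology statement as an immediate consequence of the preceding K\"unneth Proposition. You have supplied exactly the details the paper suppresses, and your route via a convex order on $\Phi^+$ obtained from a factorization $w_0=\prod_J w_0^J$ is the natural way to match PBW bases on both sides.

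One small caveat: Theorem~\ref{ASth2.6}, as stated in the paper, is only for a \emph{connected} datum, so strictly speaking you cannot invoke it directly for the PBW basis of the full $B(V)$. What you actually need (and what your argument establishes) is that the multiplication map $\ot_J B(V_J)\to B(V)$ is surjective because the $x_i$ generate $B(V)$, and injective by a dimension count: $\dim B(V_J)=\prod_{\alpha\in\Phi_J^+}N_\alpha$ for each component (this is the connected case of Theorem~\ref{ASth2.6}), while $\dim B(V)=\prod_{\alpha\in\Phi^+}N_\alpha$ by the general PBW theorem in \cite{AS}. Alternatively, your convex-order argument already yields the non-connected PBW basis directly, since the root vectors for $\alpha\in\Phi_J^+$ built in $R(\mathcal D)$ coincide with those built in $R(\mathcal D_J)$ (the $T_i$ with $i\in J$ only involve generators from $J$). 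Either way the gap is cosmetic, not mathematical.
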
\qed

\begin{Corollary}If $V=\oplus_{i=1}^tkx_i$ is a quantum linear space over
an abelian group $G$ then $B(V)\cong B_1\ot B_2\ot\ldots\ot B_t$,
where $B_i=B(kx_i)\cong k[x_i]/(x_i^{n_i})$. Moreover,
$$H^*(B(V))\cong H^*(B_1)\ot H^*(B_2)\ot\ldots\ot H^*(B_t)$$ with
$H^j(B_i,B_i)\cong k[x_i]/(x_i^{n_i-1})$ and $H^j(B_i,k)\cong k$.
\end{Corollary}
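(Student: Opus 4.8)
The plan is to reduce the statement to a single variable and then invoke the Künneth-type formula for (braided) Hochschild cohomology established above. First I would record the standard presentation of a quantum linear space, $B(V)=k\langle x_1,\ldots ,x_t\mid x_ix_j=q_{ij}x_jx_i\ (i\neq j),\ x_i^{n_i}=0\rangle$, where $n_i$ is the order of $q_{ii}$, so that $B(V)$ has the PBW basis $\{x_1^{a_1}\cdots x_t^{a_t}\mid 0\le a_i<n_i\}$; on the other hand $B(kx_i)=k[x_i]/(x_i^{n_i})=:B_i$, since the quantum symmetrizer acts on $(kx_i)^{\ot m}$ as the scalar $[m]_{q_{ii}}!=\prod_{j=1}^{m}(1+q_{ii}+\cdots +q_{ii}^{j-1})$, which first vanishes at $m=n_i$. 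The quantum linear space identity $q_{ij}q_{ji}=1$ for $i\neq j$ says precisely that $c_{B_j,B_i}c_{B_i,B_j}=1$ on the generators, so $B_1\ot\cdots\ot B_t$ is a braided Hopf algebra under the braided tensor product; comparing generators, relations and PBW bases identifies the canonical map $B_1\ot\cdots\ot B_t\to B(V)$ as an isomorphism of braided Hopf algebras, and being an algebra isomorphism it also identifies the regular bimodules. (When all $n_i$ are odd this is in addition the degenerate case of the preceding Theorem in which every connected component of the Dynkin diagram is a single node.)

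Second, I would feed this into the Künneth-type Proposition above. Each $B_i$ is finite dimensional, so iterating $H^*(A\ot A',M\ot M')\cong H^*(A,M)\ot H^*(A',M')$ over the $t$ factors gives, for $M=k$ the trivial bimodule, $H^*(B(V),k)\cong H^*(B_1,k)\ot\cdots\ot H^*(B_t,k)$, and for $M=B(V)\cong B_1\ot\cdots\ot B_t$ the regular bimodule, $H^*(B(V),B(V))\cong H^*(B_1,B_1)\ot\cdots\ot H^*(B_t,B_t)$. This reduces the Corollary to computing $H^*(B_i,B_i)$ and $H^*(B_i,k)$ for the single truncated polynomial algebra $B_i=k[x_i]/(x_i^{n_i})$.

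Third, I would carry out this rank-one computation from the standard $2$-periodic free bimodule resolution of $B_i$ over its braided enveloping algebra $B_i^e$, whose two repeating differentials induce on $\Hom_{B_i^e}(-,M)\cong M$ a commutator map $\delta_0$ and a $q_{ii}$-twisted ``norm'' map $\delta_1$. For $M=k$ both $\delta_0$ and $\delta_1$ are annihilated by the augmentation of $B_i$, so the complex has zero differential and $H^j(B_i,k)\cong k$ in every degree $j$. For $M=B_i$ one has $\delta_0=0$ and $\delta_1$ equal to multiplication by $x_i^{n_i-1}$ up to a nonzero scalar — the scalar being a value of a $q_{ii}$-binomial sum of the shape $\sum {s\choose u}_{q_{ii}}{p\choose v}_{q_{ii}}q_{ii}^{u(p-v)}$ already evaluated in the subsection on the exponential map — so that $\im\delta_1=kx_i^{n_i-1}$ and $\ker\delta_1=x_iB_i$; reading off the $2$-periodic cohomology then gives $H^j(B_i,B_i)\cong x_iB_i\cong B_i/kx_i^{n_i-1}\cong k[x_i]/(x_i^{n_i-1})$ for all $j\ge 1$ (the value in degree zero being the centre $B_i$). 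Combining the three steps gives the Corollary.

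The main obstacle is the third step, specifically the braiding bookkeeping: one must check that passing to the braided enveloping algebra and the braided bar complex leaves the two differentials exactly as stated, and in particular that the twisted norm $\delta_1$ does not accidentally vanish — equivalently, that the relevant $q_{ii}$-binomial sum is nonzero, the braided counterpart of the classical fact that $\sum_{i=0}^{n-1}x^i(\,\cdot\,)x^{n-1-i}$ has one-dimensional image on $k[x]/(x^n)$. Everything else is formal, given the presentation of a quantum linear space and the Künneth formula already proved.
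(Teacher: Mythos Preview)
Your proposal is correct and follows the same route the paper takes: the paper places only a \qed after the Corollary, treating it as immediate from the preceding Theorem (tensor decomposition over connected components) and Proposition (K\"unneth), with the rank-one Hochschild computation for $k[x]/(x^n)$ left unproved apart from the low-degree observations in the subsequent Remark. Your third step, via the standard $2$-periodic bimodule resolution, supplies exactly those omitted details; note only that the statement $H^j(B_i,B_i)\cong k[x_i]/(x_i^{n_i-1})$ is tacitly for $j\ge 1$, as you correctly observe.
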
\qed

\begin{Remark}\label{c46} Note that if $A=B(kx)=k[x]/(x^n)$, then
\begin{eqnarray*}
H^0(A,k)=k \mbox{ and } H^1(A,k)=\Vect (A^+/(A^+)^2,k).
\end{eqnarray*}
A normalized 2-cocycle $f\colon A^+\ot A^+\to k$ is a linear map
satisfying $f(x^i\ot x^j)=f(x^k\ot x^l)$ whenever $i+j=k+l$, so
that $Z^2(A,k)=\oplus kf_l$, where $f_l(x^i\ot x^j)=1$ if $i+j=l$
and $f_l(x^i\ot x^j)=0$ otherwise. If $f(x^i\ot x^j)=0$ for
$i+j=n$ then $f=\delta g$, where $g(x^{i+j})=f(x^i\ot x^j)$, so
that $H^2(A,k)=Z^2(A,k)/B^2(A,k)$ is represented by
$f_n$.
\end{Remark}

\subsection{The equivariant cohomology}
The $G$-invariant Hochschild cocycles are described via the
cosimplicial complex of $G$-invariant elements in the standard
complex. The commutative \lq pushout-pullback' square of (braided)
Hopf algebras in Section \ref{s2}
$$\begin{CD}
K @>\kappa >> R \\
@V\ep VV  @V\pi VV \\
k @>\iota >> B
\end{CD}$$
induces a square of cosimplicial algebras

$$\begin{array}{ccccccc}
\Hom_G(k,k) & ^{\del_0\atop{\longrightarrow}}_{\del_1\atop{\longrightarrow}} & \Hom_G (B,k) & ^{{\del_0\atop{\longrightarrow}}\atop{\del_1\atop{\longrightarrow}}}_{\del_2\atop{\longrightarrow}} & \Hom_G (B^2,k) & ^{{\del_0\atop{\longrightarrow}}\atop{\del_1\atop{\longrightarrow}}}_{{\del_2\atop{\longrightarrow}}\atop{\del_3\atop{\longrightarrow}}} & \Hom_G (B^3,M) \\
\|  & & \downarrow\pi^* & & \downarrow (\pi^2)^* & & \downarrow (\pi^3)^* \\
\Hom_G(k,k) & ^{\del_0\atop{\longrightarrow}}_{\del_1\atop{\longrightarrow}} &\Hom_G (R,k) & ^{{\del_0\atop{\longrightarrow}}\atop{\del_1\atop{\longrightarrow}}}_{\del_2\atop{\longrightarrow}} & \Hom_G (R^2,k) & ^{{\del_0\atop{\longrightarrow}}\atop{\del_1\atop{\longrightarrow}}}_{{\del_2\atop{\longrightarrow}}\atop{\del_3\atop{\longrightarrow}}} & \Hom_G (R^3,M) \\
\|  & & \downarrow\kappa^* & & \downarrow (\kappa^2)^* & & \downarrow (\kappa^3)^* \\
\Hom_G(k,k) & ^{\del_0\atop{\longrightarrow}}_{\del_1\atop{\longrightarrow}} & \Hom_G (K,k) & ^{{\del_0\atop{\longrightarrow}}\atop{\del_1\atop{\longrightarrow}}}_{\del_2\atop{\longrightarrow}} & \Hom_G (K^2,k) & ^{{\del_0\atop{\longrightarrow}}\atop{\del_1\atop{\longrightarrow}}}_{{\del_2\atop{\longrightarrow}}\atop{\del_3\atop{\longrightarrow}}} & \Hom_G (A^3,M)
\end{array}$$
where the trivial part has been omitted. Here is a equivariant analog of the 5-term sequence, which allows a direct calculation of the infinitesimal deformation cocycle associated with the equivariant algebra map $f\in\Alg_G(K,k)$.

\begin{Theorem} There is an exact sequence
\begin{eqnarray*}
0 \to H^1_G(B,k) {\buildrel\pi^*\over\longrightarrow} \H^1_G(R,k) {\buildrel\kappa^*\over\longrightarrow} H^1_G(K,k) {\buildrel\delta\over\longrightarrow} H_G^2(B,k) {\buildrel(\pi\ot\pi)^*\over\longrightarrow} H_G^2(R,k)
\end{eqnarray*}
\end{Theorem}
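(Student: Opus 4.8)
The plan is to obtain the sequence by applying the functor of $G$-invariants to the non-equivariant five-term exact sequence proved in Section \ref{s2} for the pushout--pullback square of (braided) Hopf algebras with maps $\kappa\colon K\to R$, $\pi\colon R\to B$, $\ep\colon K\to k$, $\iota\colon k\to B$, taken with trivial coefficient module $Y=k$:
$$0\to H^1(B,k)\to H^1(R,k)\to \Hom_B(K^+\ot_Kk,k)\to H^2(B,k)\to H^2(R,k).$$
Here $B=R/RK^+$ with $K$ normal and $R$ faithfully flat over $K$; for $K=K(\mathcal D)\subset R=R(\mathcal D)$ and $B=\cal B(V)$ these hypotheses follow from Theorem \ref{ASth2.6} (normality from item (3), faithful flatness from the PBW bases). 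Since $G$ is a finite group with $|G|$ invertible in $k$, the functor $(-)^G=\Hom_{kG}(k,-)$ is exact; hence it preserves exactness of the displayed sequence, and, being exact, it commutes with the cohomology of the standard $G$-module cochain complexes $\Hom(B^\bullet,k)$, $\Hom(R^\bullet,k)$, $\Hom(K^\bullet,k)$. This already gives $H^i(B,k)^G=H^i_G(B,k)$ and $H^i(R,k)^G=H^i_G(R,k)$ and carries $\pi^*$, $\kappa^*$ and $(\pi\ot\pi)^*$ over verbatim. An equivalent route, which I would prefer because it simultaneously produces an explicit formula for the connecting map, is to rerun the \emph{direct} derivation of the five-term sequence from Section \ref{s2} inside the abelian category of $G$-equivariant modules: all maps there are $G$-equivariant, faithful flatness of $R$ over $K$ and the relation $RK^+=K^+R$ are insensitive to $G$, and the Bar-resolution and K\"unneth/comparison arguments go through unchanged.

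The step that needs concrete input, and which I expect to be the main obstacle, is identifying the middle term as $H^1_G(K,k)$ itself rather than merely an invariant subgroup of it. For trivial coefficients there are no $1$-coboundaries, so $H^1(K,k)=\Hom(Q(K),k)$ with $Q(K)=K^+/(K^+)^2$; and in the five-term sequence the middle term $\Hom_B(K^+\ot_Kk,k)=\Hom_B(Q(K),k)$ is cut out from $\Hom(Q(K),k)$ by $B$-linearity, where the $B$-module structure on $Q(K)$ is the one induced from the adjoint action of $R$ on its normal Hopf subalgebra $K$. Now $Q(K)$ is spanned by the classes of the root-power generators $z_l=x_{\beta_l}^{N}$: each $z_l$ is primitive in $K$ (the height-one instance of the comultiplication formula for $K(\mathcal D)$) and is annihilated by the braided adjoint action of every $x_i$, since $[x_i,x_{\beta_l}^{N}]_c=0$ by Theorem \ref{ASth2.6}(3); as $R(\mathcal D)$ is generated by the $x_i$, this forces the $R$-action, hence the $B$-action, on $Q(K)$ to be trivial. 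Therefore $\Hom_B(Q(K),k)=\Hom(Q(K),k)=H^1(K,k)$, and taking $G$-invariants yields $\Hom_B(K^+\ot_Kk,k)^G=H^1(K,k)^G=H^1_G(K,k)$. (For a general pushout--pullback square this triviality fails and one must replace $H^1_G(K,k)$ by its $B$-invariant part; it is precisely the Cartan-type relations $x_\alpha x_\beta^{N}=q_{\alpha\beta}^{N}x_\beta^{N}x_\alpha$ of Theorem \ref{ASth2.6} that rescue the clean statement here. Tracking the braiding in the adjoint action is the point that needs the most care.)

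Assembling the pieces, the exact functor $(-)^G$ turns the five-term sequence into the exact sequence
$$0 \to H^1_G(B,k) {\buildrel\pi^*\over\longrightarrow} H^1_G(R,k) {\buildrel\kappa^*\over\longrightarrow} H^1_G(K,k) {\buildrel\delta\over\longrightarrow} H^2_G(B,k) {\buildrel(\pi\ot\pi)^*\over\longrightarrow} H^2_G(R,k),$$
which is the assertion: $\kappa^*$ restricts a $G$-invariant $\ep$-derivation of $R$ along $K\hookrightarrow R$, and $\delta$ is the $G$-invariant restriction of the connecting map of Section \ref{s2}, so exactness at $H^1_G(K,k)$ says a $G$-invariant $\ep$-derivation of $K$ extends to one of $R$ exactly when its $\delta$-image vanishes. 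Finally, I would record the promised explicit description of $\delta$, obtained by tracing through the connecting map of the direct derivation: it sends the first-order ($\ep$-derivation) part of the formal path from $\ep$ to an equivariant algebra map $f\in\Alg_G(K,k)$ to the cohomology class in $H^2_G(\cal B(V),k)$ of the infinitesimal Hochschild $2$-cocycle of the corresponding cocycle deformation of $\cal B(V)\#kG$ --- which is the computational payoff of the theorem.
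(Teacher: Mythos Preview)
Your approach is correct but genuinely different from the paper's, and strictly less general. The paper does \emph{not} reduce to the non-equivariant five-term sequence of Section~\ref{s2} and then take $G$-invariants; instead it gives a self-contained cochain-level argument: choose a $K$-bimodule retraction $u\colon R\to K$ of $\kappa$ (so $u\kappa=1_K$, $\ep_Ku=\ep_R$), define $\delta(f)$ for $f\in Z^1_G(K,k)$ as the class of the unique $\bar f\in Z^2_G(B,k)$ with $(\pi\ot\pi)^*\bar f=\partial(u^*f)$, and then verify exactness at each node by direct manipulation of cochains. No identification of a middle term is needed, and nothing about Cartan type or the triviality of the $B$-action on $Q(K)$ enters. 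Your route, by contrast, must identify $\Hom_B(K^+\ot_Kk,k)^G$ with $H^1_G(K,k)$, and you correctly isolate this as the crux: it holds in the Cartan setting because $[x_i,x_{\beta_l}^N]_c=0$ forces the adjoint $B$-action on $Q(K)$ to be trivial, but (as you note) it fails for a general pushout--pullback square. So your argument proves the theorem only under that extra hypothesis, whereas the paper's proof covers the general square of Section~\ref{s2} subject only to the existence of the bimodule splitting $u$.

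What each approach buys: yours is conceptually cleaner when it applies---exactness is inherited for free from the known sequence and the exactness of $(-)^G$---and it makes transparent \emph{why} the Cartan-type commutation relations are exactly what is needed to replace the $B$-invariant middle term by the full $H^1_G(K,k)$. The paper's approach is more elementary and more portable: it works whenever a $K$-bimodule retraction exists, and it produces the explicit formula $\delta(f)=[\bar f]$ with $(\pi\ot\pi)^*\bar f=\partial(u^*f)$ directly, which is precisely the description used afterward to compute the infinitesimal deformation cocycle. Your ``preferred route'' of rerunning the direct derivation equivariantly is in spirit close to what the paper does, but the paper's actual argument (via the retraction $u$) is different from, and more explicit than, the $\Ext$-sequence derivation you would be rerunning.
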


\begin{proof}To construct $\del :H^1_G(K,k)\to H^2_G(B,k)$ observe first that
\begin{eqnarray*}
\Der_G(K,k)=H^1_G(K,k)=Z^1_G(K,k) =\{ f\in\Hom_G(K,k)|\del^1f=\del^0f + \del^2f\}.
\end{eqnarray*}
Choose a $K$-bimodule retraction $u:R\to K$ for $\kappa :K\to R$ so that $u\kappa = 1_K$ and $\ep_Ku=\ep_R$. Then $(\kappa\ot\kappa )^*\del^iu^*=\del^i\kappa^*u^*=\del^i$ for $i=0,1,2$. It also follows that
$$\del u^*f(K^+R\ot R + R\ot RK^+)=0$$
for any $f\in Z^1_G(K,k)$, since $u$ is a $K$-bimodule map, so that
\begin{eqnarray*}
\del u^*f(xr\ot r') & = & \ep (xr)u^*f(r') - u^*f(xrr') + u^*f(xr)\ep (r') \\
& = & -f(xu(rr')) + f(xu(r))\ep (r')  \\
& = & -f(x)\ep u(rr') + f(x)\ep u(r)\ep (r')=0
\end{eqnarray*}
and similarly $\del u^*f(r\ot r'x)=0$.
This means that the 2-cocycle $\del u^*f:R\ot R\to k$ factors uniquely through $\pi\ot\pi :R\ot R\to B\ot B$, i.e: there exists a unique 2-cocycle $\bar f:B\ot B\to k$ such that $(\pi\ot\pi )^*\bar f=\del u^*f$. So define
$$\delta :H^1_G(K,k)\to H^2_G(B,k)$$
by $\delta (f)=[\bar f]$, the cohomology class of $\bar f$.

Exactness at $H^1_G(B,k)$: It is clear that $\pi^*:H^1_G(B,k)\to H^1_G(R,k)$ is injective, since $\pi$ is surjective, and that $\kappa^*\pi^*=(\pi\kappa )^*=(\iota\ep )^*=\ep^*\iota^*$ is the trivial map, since $f(1)=0$ for $f\in Z^1_G(B,k)$.

Exactness at $H^1_G(R,k)$: Suppose that $f\in Z^1_G(R,k)$ and $\kappa^*(f)=0$. Then $f(RK^+R)=0$, since $f(rxr')=\ep (r)f(xr') + f(r)\ep (xr') = \ep (r)\ep (x)f(r')+\ep (r)f(x)\ep (r')+f(r)\ep (xr')=0$ for $x\in K^+$ . Hence, there is a unique $f'\in \Hom_G(B,k)$ such that $\pi^*(f')=f$. Moreover, $0=\del f=\del\pi^*f'=(\pi^*\ot\pi^*)\del f'$, so that $\del f'=0$, since $(\pi\ot\pi)^*$ is injective.

Exactness at $H^1(K,k)$: First show that $\delta\kappa^*=0$. If $f\in Z^1_G(R,k)$, then $f(1)=0$ and $\delta\kappa^*f=[\tilde f]\in H^2_G(B.k)$ with $\tilde f\in Z^2_G(B.k)$, and $(\pi\ot\pi )^*\tilde f=\del u^*\kappa^*f\in B^2_G(B,k)$ Moreover, $0=\del f(\kappa (x)\ot r)= -f(\kappa (x)r)+f\kappa (x)\ep (r)$, so that $(f-u^*\kappa^*f)(\kappa (x)r)=f(\kappa (x)r-f(\kappa (x)\kappa u(r))=f\kappa (x)\ep (r)-f\kappa (x)\ep\kappa u(r)=0$. Thus $f-u^*\kappa^*f$ factors uniquely through $\pi :R\to B$, i.e: $\pi^*f'=u^*\kappa^*f-f$ for a unique $f'\in\Hom_G(B,k)$. But then $(\pi\ot\pi )^*\del f'=\del\pi^*f'=\del(u^*\kappa^*f-f)-\del u^*\kappa^*f=(\pi\ot\pi )^*\tilde f$, and therefore $\tilde f=\del f'\in B^2_G(B,k)$, since $(\pi\ot\pi )^*$ is injective.

Now, if $\delta (f)=[\bar f] =0$ for a given $f\in Z^1_G(K,k)$, then
$\bar f=\del f'$ for some $f'\in\Hom_G(B^+,k)$, and $\del\pi^*f'= (\pi\ot\pi )^*\del f' = (\pi\ot\pi )^*\bar f = \del u^*f$, so that $u^*f - \pi^*f' \in Z^1_G(R,k)$. Then $\kappa^*(u^*f-\pi^*f')=f-(\pi\kappa )^*f'=f\in Z^1_G(K,k)$.

Exactness at $H^2_G(B,k)$: Finally, if $(\pi\ot\pi )^*[f]=0$ for a given $[f]\in H^2_G(B,k)$ then $(\pi\ot\pi )^*f=\del f'$ for some $f'\in\Hom_G(R^+,k)$. Moreover, $\del f'(\kappa (x)\ot r)=\ep\kappa (x)f'(r)-f'(\kappa (x)r)+f'\kappa (x)\ep (r) =(\pi\ot\pi )^*f(\kappa (x)\ot r)=f(\pi\kappa (x)\ot\pi (r))=f(\iota_B\ep_K(x)\ot\pi (r))$. Thus, if $x\in K^+$ then $f'(\kappa (x)r)=f'\kappa (x)\ep (r)$ and $(f'-u^*\kappa^*f')(\kappa (x)r)=f'(\kappa (x)r)-f'(\kappa (x)\kappa u(r))=f'\kappa (x)\ep (r)-f'\kappa (x)\ep\kappa u(r)=0$, so that $K^+R$ is in the kernel of $(f'-u^*\kappa^*f')$. It follows that there is a unique $f'':B\to k$ such that $\pi^*f''=f'-u^*\kappa^*f'$. Then $(\pi\ot\pi )^*(f-\del f'')=(\pi\ot\pi )^*f-\del\pi^*f''=\del u^*\kappa^*f'$, which means that $[f]=[f-\del f'']=\delta (\kappa^*f')$.
\end{proof}

The connecting map $\delta :H^1_G(K,k)\to H^2_G(B,k)$ can be used to describe the infinitesimal part of the \lq multiplicative' cocycles $\sigma :B\ot B\to k$ in terms of the algebra map $f\in\Alg_G(K,k)$ taking into account that $L(\mathcal D)\cong K^+/(K^+)^2$ and $H^1_G(K,k)\cong\Hom_G(K^+/(K^+)^2,k)$.

If $A=B\# kG$ then the collection of isomorphisms
$$\Psi_n:\Hom_G(B^n,k)\to {_G\Hom_G}(A^n,k),$$
given by $\Psi_n(f)(b_1g1, b_2g_2,\ldots , b_ng_n)=f(b_1, g_1(b_2),\ldots , g_1g_2\ldots g_{n-1}(b_n)$ and $\Psi^{-1}f'(b_1, b_2,\ldots , b_n)=f'(b_11, b_21,\ldots ,b_n1)$, defines an isomorphism of complexes, which induces an isomorphism in cohomology
$$\Psi^*:H^*_G(B,k)\to {_GH^*_G}(B\# kG,k).$$
The image of the composite
$$\Psi^2\delta :H^1_G(K,k)\to H^2_G(B,k)\to {_gH^2_G}(B\# kG,k)$$
consists of the infinitesimal parts of of the \lq multiplicative' cocycles. If $\zeta_F=\Psi^2\delta (f)$ then $(\zeta_f*m-m*\zeta_f)\in H^2(A,A)$ and $m+(\zeta_f*m-m*\zeta_f):A\ot A\to A$ is the infinitesimal part of the cocycle deformation associated with $f\in\Alg_G(K,k)$.

\section{Liftings and deformations}

The formal cocycle deformations of the previous section are in
particular formal deformations in the sense of \cite{GS, DCY}. The
subject of this section is the relation between formal
deformations, liftings and Hochschild cohomology of (braided) Hopf
algebras.

\subsection{Deformations of (graded) bialgebras} The formal deformation of a (graded) bialgebra $(A, m, \Delta ,\iota ,\ep )$
is a bialgebra structure $(A[[t]], m(t),\Delta (t), \iota ,\ep )$
on the free $k[[t]]$-module $A[[t]]=A\ot k[[t]]$, such that
$m(0)=m$ and $\Delta (0)=\Delta$. Here $m(t)=\sum{i\ge 0}\mu_it^i$
and $\Delta (t)=\sum_{i\ge 0}\delta_it^i$ are determined by
sequences of linear maps $\mu_i\colon A\ot A\to A$ and
$\delta_i\colon A\to A\ot A$. An $l$-deformation of $A$ is a
bialgebra structure on the free $k[[t]]/(t^{l+1})$-module
$A_l=A[[t]]/(t^{l+1})$. The associativity, coassociativity and
compatibility conditions are
\begin{enumerate}
\item Associativity: $\sum_{r+s=i}\mu_r(\mu_s\ot 1)=
\sum_{r+s=i}\mu_r(1\ot\mu_s)$,
\item Coassociativity: $\sum_{r+s=i}(\delta_r\ot 1)\delta_s=
\sum_{r+s=i}(1\ot\delta_r)\delta_s$,
\item Compatibility: $\sum_{r+s=i}\delta_r\mu_s =
\sum_{r+s+u+v=i}(\mu_r\ot\mu_s)\tau_{23}(\delta_u\ot\delta_v)$.
\end{enumerate}
In particular for infinitesimal deformations, the case $l=1$,
these are 2-cocycle conditions in the bialgebra cohomology.

An isomorphism of $l$-deformations is an isomorphism of
$k[[t]]/(t^{l+1})$-bialgebras
$$f\colon (A_l, m_l, \Delta_l )\to (A_l, m'_l, \Delta '_l)$$
such that $\iota^* (f)=id_A$. Such an isomorphism is of the
form $f=\sum_{i\ge 0}f_iT^i$ for a sequence of
maps $f_i\colon H\to H$ satisfying the conditions
\begin{itemize}
\item[(4)] $\sum_{r+s=i}f_r\mu_s =\sum_{t+u+v=i}\mu_t'(f_u\ot f_v)$
\item[(5)] $\sum_{r+s=i}\delta_r'f_s =\sum_{u+v+t=i}(f_u\ot f_v)\delta_t$
\end{itemize}
required by the fact that $f$ is a $k[[t]]/(t^{l+1})$-bialgebra map. The set of
isomorphism classes of $l$-deformations of $A$ will be denoted by
$\Def_l(A)$. The projection $k[[t]]/(t^{l+1})\to k[[t]]/(t^l$ induces a restriction
map $\res_l\colon \Def_{l+1}(A)\to\Def_l (A)$ and
$$\Def (A)=\lim_{\leftarrow}\Def_l(A)$$
is the set of isomorphism classes of formal deformations
($\infty$-deformations) of $A$.

\begin{Theorem} \cite{Gr2} The restriction map $\res_l\colon \Def_{l+1}(A)\to\Def_l (A)$
fits into an exact sequence of pointed sets
$$\begin{CD}
H^2(A,A) @> >> \Def_{l+1}(A) @>\res_l>> \Def_l(A) @>\obs_l>>  H^3(A,A)
\end{CD}$$
for $l\ge 0$. In particular:
\begin{itemize}
\item $H^2(A,A)\cong\Def_1(A)$ is an abelian group,
\item Every formal deformation of $A$ is trivial if and only if $H^2(A,A)=0$,
\item If $H^3(A,A)=0$ then every infinitesimal deformation can be
extended to a formal deformation.
\end{itemize}
\end{Theorem}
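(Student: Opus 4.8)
The plan is to run the classical order-by-order obstruction calculus of Gerstenhaber, carried out in the bialgebra cochain complex $C^\bullet(A,A)$ whose cohomology is the $H^\bullet(A,A)$ appearing in the statement (the Gerstenhaber--Schack-type complex governing simultaneous deformations of $m$ and $\Delta$; cf. \cite{GS, Gr2}): a normalized $2$-cochain is precisely a pair $(\phi,\psi)$ with $\phi\colon A\ot A\to A$ and $\psi\colon A\to A\ot A$, a $3$-cochain is a triple whose three slots are the ambient spaces of the associativity, compatibility and coassociativity defects, and the degree-$2$ differential $D$ records exactly the linearizations occurring in conditions (1), (2), (3). First I would rewrite those conditions at each order $i$ in the form $D(\mu_i,\delta_i)=\Omega_i$, where $\Omega_i$ is a fixed cup-bracket-type expression depending only on the lower components $\mu_1,\dots,\mu_{i-1},\delta_1,\dots,\delta_{i-1}$. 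For a given $l$-deformation these hold for $i\le l$ by hypothesis, so extending it to an $(l+1)$-deformation is exactly the problem of solving $D(\mu_{l+1},\delta_{l+1})=\Omega_{l+1}$ for some normalized $2$-cochain $(\mu_{l+1},\delta_{l+1})$.

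The heart of the argument, and the step I expect to be the main obstacle, is the lemma that $\Omega_{l+1}$ is a $3$-cocycle whenever (1)--(3) hold up to order $l$; this is the bialgebra analogue of ``the obstruction is closed'', and the three scalar identities making up $D\Omega_{l+1}=0$ must be extracted from the quadratic relations among $\mu_1,\dots,\mu_l,\delta_1,\dots,\delta_l$ using the pre-Lie/brace structure of the Gerstenhaber--Schack complex (the algebra-direction Gerstenhaber bracket, its coalgebra-direction dual, and the mixed compatibility operations). Granting this, put $\obs_l([(\mu_\bullet,\delta_\bullet)])=[\Omega_{l+1}]\in H^3(A,A)$. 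One checks it is well defined on isomorphism classes: an isomorphism $f=\sum f_iT^i$ of $l$-deformations, constrained by (4) and (5), carries one solution of the order $\le l$ equations to another and alters the order-$(l+1)$ obstruction cochain by $D$ of an explicit $2$-cochain assembled from the $f_i$, hence leaves its class unchanged; and $\obs_l$ visibly kills the trivial $l$-deformation. Exactness at $\Def_l(A)$ is then immediate: an $l$-deformation lies in $\im(\res_l)$ iff $D(\mu_{l+1},\delta_{l+1})=\Omega_{l+1}$ is solvable iff $[\Omega_{l+1}]=0$, i.e. iff its image under $\obs_l$ is the basepoint.

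For exactness at $\Def_{l+1}(A)$ I would make explicit the action of $H^2(A,A)$ on the fibres of $\res_l$: if $(\mu_\bullet,\delta_\bullet)$ is an $(l+1)$-deformation and $(\phi,\psi)$ a bialgebra $2$-cocycle, then replacing $(\mu_{l+1},\delta_{l+1})$ by $(\mu_{l+1}+\phi,\delta_{l+1}+\psi)$ adds $D(\phi,\psi)=0$ to the order-$(l+1)$ equations and nothing to the lower ones, so it is again an $(l+1)$-deformation with the same $l$-truncation; conversely two $(l+1)$-deformations sharing an $l$-truncation differ at order $l+1$ by a $2$-cocycle, and if that difference is a coboundary $D(f_{l+1})$ the two are isomorphic via $f=\mathrm{id}+f_{l+1}T^{l+1}$. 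Thus $H^2(A,A)$ acts transitively on each nonempty fibre of $\res_l$ with orbits precisely the isomorphism classes, and restricting the action to the fibre over the trivial $l$-deformation yields the map $H^2(A,A)\to\Def_{l+1}(A)$ together with exactness at that spot.

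Finally the three itemized consequences fall out of the sequence. Taking $l=0$, where $\Def_0(A)$ is the single point $\{A\}$, the sequence collapses to a bijection $H^2(A,A)\cong\Def_1(A)$, and one checks the transported structure is the natural (Baer-type) addition of infinitesimal deformations, hence the group law of $H^2(A,A)$. If $H^2(A,A)=0$ then every $\res_l$ has singleton fibres, so by induction each $\Def_l(A)$ is a point and $\Def(A)=\lim_{\leftarrow}\Def_l(A)$ is a point, i.e. every formal deformation is trivial; the reverse implication is obtained, via $H^2(A,A)\cong\Def_1(A)$, by producing a non-trivial infinitesimal deformation whenever $H^2(A,A)\ne 0$. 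And if $H^3(A,A)=0$ then $\obs_l$ is the trivial map for every $l$, so every $\res_l$ is surjective and any infinitesimal deformation can be lifted step by step to a formal one.
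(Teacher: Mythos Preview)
Your proposal is correct and follows essentially the same route as the paper, which gives only a brief sketch (with citation to \cite{Gr2}) of precisely this Gerstenhaber-style obstruction calculus: the paper defines $\obs_l$ as the class of the order-$(l+1)$ defect cochain, asserts it is a cocycle, and notes that two extensions over a fixed $l$-deformation differ by a $2$-cocycle. Your write-up is in fact more complete than the paper's sketch: you correctly treat the obstruction as a triple (associativity, compatibility, coassociativity), whereas the paper's displayed $\psi$ records only the two outer components; and you address well-definedness on isomorphism classes and the derivation of the three itemized consequences, which the paper omits entirely.
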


\begin{proof} (Sketch) If two $(l+1)$-deformations
restrict to the same $l$-deformation then they differ by a pair of
compatible 2-cocycles
$(\mu_{l+1}-\mu_{l+1}', \delta_{l+1}-\delta_{l+1}')$. If $(A_l, m_l,\Delta_l)$ is an $l$-deformation then
$$\psi=(\sum_{i+j=l+1}\mu_i(\mu_j\ot 1-1\ot\mu_j), \sum_{i+j=l+1}(\delta_i\ot 1-1\ot\delta_i)\delta_j)$$
is a 3-cocycle, and $\obs_l(A_l,m_l,\Delta_l)$ is the cohomology
class of $\psi$. Thus, if $\obs_l(A_l,m_l,\Delta_l)=0$ then $\psi
$ is a 3-coboundary, that is $\psi =\del
(\mu_{l+1},\delta_{l+1})=(\mu_0(1\ot\mu_{l+1}-\mu_{l+1}\ot
1)+\mu_{l+1}(1\ot\mu_0-\mu_0\ot 1),
(1\ot\delta_{l+1}-\delta_{l+1}\ot
1)\delta_0+(1\ot\delta_0-\delta_0\ot 1)\delta_{l+1})$ for some
$\mu_{l+1}\colon A\ot A\ot A\to A$ and $\delta_{l+1}\colon A\to
A\ot A\ot A$, and $(A_l, m_l, \Delta_l)=\res_l(A_{l+1}, m_{l+1},
\Delta_{l+1})$.
\end{proof}

\subsection{Liftings of (graded) bialgebras} If $A=\oplus_{n\ge 0}A_i$
is a graded bialgebra then it carries an
ascending bialgebra filtration $A_c^{(j)}=\oplus_{i\le j}A_i$ and
a descending bialgebra filtration $A^r_{(j)}=\oplus_{i\ge j}A_i$.
A lifting of a graded bialgebra $A$ is a filtered Hopf algebra
structure $K= (A, M, \Delta)$ on the vector space $A$ such that
$\gr_cK\cong A$. A co-lifting of $A$ is a co-filtered bialgebra
structure $G=(A, M, \Delta)$ such that $\gr_r(G)\cong A$. Let
$$\Lift (A) \quad \and \quad \co-Lift (A)$$
be the sets of equivalence classes of liftings and of co-liftings
of $A$, respectively.

\begin{Theorem}[cf. \cite{DCY}] There are bijections $\Lift (A)\cong\Def (A)\cong \co-Lift (A)$.
\end{Theorem}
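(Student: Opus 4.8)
The plan is to exhibit explicit, mutually inverse maps between the three sets, using the $\mathbf{Z}$-grading of $A$ to turn filtration data into formal-deformation data and conversely. The key observation is that a lifting $K=(A,M,\Delta)$ with $\gr_cK\cong A$ has, for each $n$, the component $M_n\colon A\ot A\to A$ of degree $-n$ of its multiplication (so $M_0=m$, the original product), and similarly $\Delta=\sum\Delta_n$ with $\Delta_0$ the original coproduct only after we also record the co-filtration; for a genuine lifting (ascending coradical-type filtration) the comultiplication is already graded, $\Delta=\Delta_0$, while the multiplication picks up lower-degree correction terms. Setting $\mu_i=M_i$ and $\delta_i=\Delta_i\cdot$ (with $\delta_0=\Delta$, higher $\delta_i=0$ for a lifting; dually $\mu_i=0$, $i>0$, and $\delta_i=\Delta_i$ for a co-lifting) and introducing the formal parameter $t$, the filtered-bialgebra axioms for $K$ become \emph{exactly} the associativity, coassociativity and compatibility conditions (1)--(3) for a formal deformation $m(t)=\sum\mu_it^i$, $\Delta(t)=\sum\delta_it^i$: this is because the degree-$(-n)$ component of each axiom for $K$ is the coefficient of $t^n$ in the corresponding deformation identity. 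So I would define $\Phi\colon\Lift(A)\to\Def(A)$ by $K\mapsto(A[[t]],\sum M_it^i,\Delta)$ and check it is well defined on equivalence classes (an isomorphism of liftings over the filtration corresponds, grading-component by grading-component, to an isomorphism $f=\sum f_it^i$ of deformations satisfying (4)--(5)).

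Next I would construct the inverse $\Psi\colon\Def(A)\to\Lift(A)$. Given a formal deformation $(A[[t]],m(t),\Delta(t))$ of the \emph{graded} bialgebra $A$, specialize $t=1$: the point is that because $A=\oplus_n A_n$ and $\mu_i$ lowers degree by $i$, when $m(t)$ and $\Delta(t)$ are evaluated on elements of $A$ the sums $\sum\mu_i(a\ot b)$ and $\sum\delta_i(a)$ are \emph{finite} (only finitely many $i$ contribute on a fixed pair of homogeneous elements, since $A_n=0$ for $n<0$), so $t=1$ makes sense and yields a genuine bialgebra structure on the vector space $A$, filtered by $A_c^{(j)}=\oplus_{i\le j}A_i$, whose associated graded is $A$ again because $\mu_i$, $\delta_i$ for $i>0$ strictly lower degree. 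Thus $\Psi$ is well defined, and $\Phi\Psi$, $\Psi\Phi$ are identities essentially by construction. The co-lifting bijection $\Def(A)\cong\co\text{-}\Lift(A)$ is the mirror image: use the \emph{descending} filtration $A^r_{(j)}=\oplus_{i\ge j}A_i$, deform the comultiplication rather than the multiplication, and invoke the same grading/specialization argument. Composing the two gives $\Lift(A)\cong\co\text{-}\Lift(A)$ as well, matching a lifting (deformed product, fixed coproduct) with a co-lifting (deformed coproduct, fixed product) via the common invariant $\Def(A)$.

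I would organize the write-up as: (i) the dictionary between the filtered axioms and the deformation identities, component by component in the grading; (ii) the well-definedness of $\Phi$ on equivalence classes, using conditions (4)--(5) of the previous subsection; (iii) the specialization $t=1$ argument producing $\Psi$, with the explicit remark that $\mathbf{N}$-gradedness (positivity) is what guarantees convergence; (iv) the verification $\Phi\Psi=\id$, $\Psi\Phi=\id$; and (v) the dual statements by symmetry, noting that for $A=B(V)\#kG$ or $H(V)$ both filtrations are available since $\gr H$ is both pointed and copointed by Lemma~\ref{l17}.

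\emph{The main obstacle} I expect is not conceptual but bookkeeping: pinning down precisely which filtration ($A_c^{(j)}$ versus $A^r_{(j)}$) is compatible with deforming $m$ versus $\Delta$, and checking that ``$\gr_cK\cong A$ as graded bialgebras'' really is equivalent to ``the correction terms $\mu_i$ ($i>0$) strictly lower degree'' --- one must rule out that a filtered isomorphism could shuffle degrees. The positivity of the grading (so that $\gr$ of the filtered object recovers $A$ with its original $m,\Delta$ in the top-degree component of each axiom) is exactly what makes this work, and the citation \cite{DCY} handles the analogous statement, so I would lean on that for the delicate direction and give the grading-component dictionary in enough detail that the reader can reconstruct conditions (1)--(5) as the $t$-adic expansion of the filtered axioms.
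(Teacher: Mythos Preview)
Your overall strategy---decompose $M$ and $\Delta$ into homogeneous components, introduce the formal parameter $t$, and use that the filtered-bialgebra axioms become, degree by degree, the deformation identities (1)--(3), with (4)--(5) handling isomorphisms---is exactly the paper's approach. The specialization $t=1$ for the inverse is also correct and is implicit in the paper's assertion that $D$ is a bijection.

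There is, however, a genuine error in your dictionary. You assert that for a lifting ``the comultiplication is already graded, $\Delta=\Delta_0$'' (and dually that $\mu_i=0$ for $i>0$ in a co-lifting). This is false under the definition being used. A lifting is \emph{any} filtered bialgebra structure $K=(A,M,\Delta)$ on the vector space $A$, filtered by $A_c^{(j)}=\oplus_{i\le j}A_i$, with $\gr_cK\cong A$; nothing forces the comultiplication to respect the grading rather than merely the filtration. Both $M$ and $\Delta$ decompose as $M=\sum_{r\ge 0}\mu_r$ and $\Delta=\sum_{r\ge 0}\delta_r$ with $\mu_r,\delta_r$ of degree $-r$ (degree $+r$ in the co-lifting case), and in general the higher $\delta_r$ are nonzero. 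The paper's proof keeps both series. Your restriction would make $\Phi$ land only in those deformations with $\Delta(t)=\Delta$ constant, so $\Phi$ could not be surjective onto $\Def(A)$, and the claimed bijection would fail.

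You may be conflating the general notion of lifting here with the specific Andruskiewitsch--Schneider liftings of $B(V)\#kG$, which \emph{do} keep the comultiplication fixed; but that is a feature of those particular examples, not of the definition. The fix is simple: drop the parenthetical ``higher $\delta_i=0$'' claim, carry both series $\sum\mu_it^i$ and $\sum\delta_it^i$ throughout, and your argument then coincides with the paper's.
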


\begin{proof} We deal with the co-lifting part of the theorem.
Let $G=(A, M, \Delta )$ be a co-lifting of the graded bialgebra
$H$, so that $\gr_r G= H$. The multiplication and the
comultiplication are maps of co-filtered vector spaces and they
uniquely determine maps $\mu_r\colon A\ot A\to A$ and
$\delta_r:A\to A\ot A$ of degree $r$ for every $r\ge 0$, such that
$M(a\ot b)=\sum_{r\ge 0}\mu_r(a\ot b)$ and $\Delta (c)=\sum_{r\ge
0}\delta_r(c)$. By associativity of $M$, coassociativity of
$\Delta$ and compatibility of the two structure maps these linear
maps satisfy exactly the conditions (1), (2) and (3) of the
previous subsection. Now define a bialgebra $D(G)=(A[[t]], m_d,
\Delta_d)$ over $k[[t]]$ by $m_d=\sum_{i\ge 0}\mu_it^i$ and $\Delta_d=\sum_{i\ge 0}\delta_it^i$. This gives a well-defined
bijection
$$D\colon \co-Lift(H)\to\Def(H)$$
since equivalent co-liftings are sent to isomorphic deformations.
An isomorphism of co-liftings $f\colon G\to G'$ is a map of
co-filtered bialgebras so that $f(a)=\sum_{r\ge 0}f_r(a)$ for
uniquely determined linear maps $f_r\colon A\to A$ of degree $r$,
which satisfy the conditions (4) and (5) of the previous
subsection since $f$ is a bialgebra map. The induced map
$f_d\colon D(G)\to D(G')$, defined by $f_d=\sum_{i\ge 0}f_iT^i$, is an isomorphism of deformations.
Similar arguments work for liftings \cite{DCY}, but now the linear
maps $\mu_r$, $\delta_r$ and $f_r$ are of degree $-r$.
\end{proof}

\begin{Lemma}[cf. \cite{GS}, \cite{MW}] If $\zeta\colon A\ot A\to k$ is a Hochschild cocycle of degree -n then the linear map
$\mu =(\zeta\ot m-m\ot\zeta )\Delta_{A\ot A}\colon A\ot A\to A$ is
of degree $-n$ and satisfies the cocycle condition
$$m(\mu\ot 1)+\mu (m\ot 1)=m(1\ot\mu )+\mu (1\ot m),$$
so that $m_{\zeta ,t}=m+\mu t^n\colon (A\ot A)[t]\to A[t]$ is an
infinitesimal deformation.
\end{Lemma}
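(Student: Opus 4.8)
First I would verify the degree claim. Since $A$ is graded, $\Delta$ has degree $0$ for the tensor grading, so $\Delta(A_i)\subseteq\bigoplus_{p+q=i}A_p\otimes A_q$, while $\zeta$ of degree $-n$ annihilates $A_p\otimes A_q$ unless $p+q=n$. Writing $\mu$ in Sweedler notation,
$$\mu(x\otimes y)=\zeta(x_1\otimes y_1)\,x_2y_2-x_1y_1\,\zeta(x_2\otimes y_2),$$
for $x\in A_i$ and $y\in A_j$ the first summand is nonzero only when $\deg x_1+\deg y_1=n$, in which case $x_2y_2\in A_{i+j-n}$; the second summand is handled the same way. Hence $\mu(A_i\otimes A_j)\subseteq A_{i+j-n}$, i.e. $\mu$ is homogeneous of degree $-n$.

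The substance of the lemma is the cocycle identity, which is exactly the vanishing of the Hochschild differential of $\mu$ with coefficients in the bimodule $A$, that is $\partial\mu(x\otimes y\otimes z)=x\,\mu(y\otimes z)-\mu(xy\otimes z)+\mu(x\otimes yz)-\mu(x\otimes y)\,z=0$. The plan is to substitute the Sweedler formula for $\mu$ into each of these four terms, expand using $\Delta(xy)=x_1y_1\otimes x_2y_2$ and coassociativity, and so write $\partial\mu(x\otimes y\otimes z)$ as a sum of eight terms. The hypothesis that $\zeta$ is a Hochschild $2$-cocycle is the identity $\varepsilon(a)\,\zeta(b\otimes c)+\zeta(a\otimes bc)=\zeta(ab\otimes c)+\zeta(a\otimes b)\,\varepsilon(c)$; applying it to the two terms containing $\zeta(x_1y_1\otimes z_1)$ and $\zeta(x_2y_2\otimes z_2)$ rewrites each of them as three terms, and then the counit relations $\varepsilon(x_1)x_2=x=x_1\varepsilon(x_2)$ (and likewise for $z$) absorb the stray $\varepsilon$'s, after which all the terms cancel in pairs. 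This gives $\partial\mu=0$, which is precisely the asserted identity $m(\mu\otimes 1)+\mu(m\otimes 1)=m(1\otimes\mu)+\mu(1\otimes m)$, so that $m_{\zeta,t}=m+\mu t^n$ is an infinitesimal deformation of $A$ as an algebra; since $\Delta$ is unchanged and the telescoping shape of $\mu$ forces $\Delta\circ\mu=(\mu\otimes m+m\otimes\mu)\tau_{23}(\Delta\otimes\Delta)$ automatically, it is in fact an infinitesimal bialgebra deformation.

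A conceptual reason for this, which I would also record, is that $\mu$ is the term of order $t^n$, modulo $t^{n+1}$, of the twisted multiplication $\sigma(t)*m*\sigma(t)^{-1}$ attached to the normalized cochain $\sigma(t)=\varepsilon\otimes\varepsilon+\zeta t^n$, exactly as in the discussion of formal cocycle deformations above; expanding the multiplicative $2$-cocycle condition for $\sigma(t)$ modulo $t^{n+1}$ (and using that $\zeta*\zeta$ has degree $-2n$, hence vanishes modulo $t^{n+1}$) shows it holds there precisely when $\partial\zeta=0$, and since twisting an associative product by a multiplicative $2$-cocycle again yields an associative product by manipulations polynomial in $\sigma(t)^{\pm1}$, associativity of $m+\mu t^n$ modulo $t^{n+1}$ follows at once. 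The only real difficulty, in either route, is bookkeeping: in the direct computation, tracking the Sweedler legs and invoking the $\zeta$-cocycle relation at the two right places, after which the cancellation is forced. In the braided setting the same argument goes through verbatim with the braiding $c$ inserted in $\Delta_{A\otimes A}=(1\otimes c\otimes 1)(\Delta\otimes\Delta)$ and in $\Delta(xy)$; since $c$ is a morphism of coalgebras every identification above remains valid.
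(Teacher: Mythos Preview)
Your argument is correct. The paper does not actually supply a proof of this lemma: it is stated with the attribution ``cf.\ \cite{GS}, \cite{MW}'' and no further justification, so there is nothing to compare your approach against beyond those references. Your write-up fills in what the paper omits, and both routes you give are sound.

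A small remark on presentation: in the direct computation, when you say you apply the Hochschild relation for $\zeta$ to the terms carrying $\zeta(x_1y_1\otimes z_1)$ and $\zeta(x_2y_2\otimes z_2)$, it is perhaps cleaner to pair terms first. Grouping the eight summands of $\partial\mu$ as (3)+(5) and (4)+(6) (in the obvious labelling) lets you invoke the identity $\zeta(a\otimes bc)-\zeta(ab\otimes c)=\zeta(a\otimes b)\varepsilon(c)-\varepsilon(a)\zeta(b\otimes c)$ once on each pair; the counit relations then collapse the result to exactly $-$(1)$-$(7) and $-$(2)$-$(8). This is equivalent to what you describe but avoids having to track three new terms from each substitution. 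Your conceptual argument via $\sigma(t)=\varepsilon\otimes\varepsilon+\zeta t^n$ is the same mechanism the paper already uses in Section~4.1 to identify the infinitesimal part of a cocycle deformation, so it fits naturally here. The compatibility check $\Delta\mu=(\mu\otimes m+m\otimes\mu)\tau_{23}(\Delta\otimes\Delta)$ is indeed automatic: the two ``middle'' terms cancel because $\zeta$ takes scalar values, exactly as you indicate.
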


\subsection{Examples}\label{ex1} To illustrate the discussion above let us consider the case of
one-dimensional crossed modules over a cyclic group.

1. Let $G=\langle g\rangle$ be a cyclic group of order $np$ and
let $V=kx$ be a 1-dimensional crossed $G$-module with action and
coaction given by $gx=qx$ for a primitive $n$-th root of unity $q$
and $\delta (x)=g\ot x$. The braiding $c\colon V\ot V\to V\ot V$
is then determined by $c(x\ot x)=qx\ot x$. The braided Hopf
algebra $\mathcal A(V)$ is the polynomial algebra $k[x]$ with
comultiplication $\Delta (x^i)=\sum_{r+s=i}{i\choose r}_q x^r\ot
x^s$ in which $x^n$ is primitive. The braided Hopf algebra
$\mathcal C(V)=k\langle x\rangle$ is the divided power Hopf
algebra with basis $\setst{x_i}{i\ge 0}$, comultiplication $\Delta
(x_i)=\sum_{r+s=i}x_r\ot x_s$ and multiplication
$x_ix_j={{i+j}\choose i}_qx_{i+j}$. The quantum symmetrizer
$\mathcal S:\mathcal A(V)\to\mathcal C(V)$ is given by $\mathcal S
(x^i)=\mathcal S(x)^i=i_q!x_i$. The Nichols algebra of $V$ is
$B(V)=\mathcal A(V)/(x^n)\cong\im\mathcal S$ and the Hopf algebra
\begin{eqnarray*}
A &=& B(V)\# k G \\
&=&\left\langle x, g|x^n=0, g^{np}=1, gx=qxg, \Delta (x)=x\ot
1+g\ot x, \Delta (g)=g\ot g\right\rangle
\end{eqnarray*}
is coradically graded.

The convolution invertible linear functional $\sigma\colon A\ot
A\to k$ defined by
$$\sigma (x^ig^u\ot x^jg^v)=\left\{ \begin{array}{ll} 1, & \mbox{if $i+j=0$}; \\
0, & \mbox{if $0 < i+j < n$};\\
aq^{ju}, & \mbox{if $i+j=n$}
\end{array}\right. $$
is a cocycle of the form $\sigma =\ep\ot\ep
+\zeta$, where $\zeta (x^ig^u\ot x^jg^v)= aq^{ju}\delta^{i+j}_n$
is a functional of degree $-n$ and $\zeta^2=0$, so that
$\sigma^{-1}=\ep\ot\ep -\zeta $. The resulting cocycle deformation
$$m_{\sigma}=(\sigma\ot m\ot\sigma^{-1})\Delta^{(2)}_{A\ot A}\colon A\ot A\to A$$
of the multiplication $m\colon A\ot A\to A$ is then given by
$$m_{\sigma}=m + (\zeta\ot m-m\ot\zeta )\Delta_{A\ot A}=\mu_0 +\mu_n$$
and is compatible with the original comultiplication. The explicit
expression of $m_{\sigma}$ in terms of the PBW-basis of $A$ is
$$m_{\sigma}(x^ig^j\ot x^kg^l)=
q^{jk}(x^{i+k}+ax^{\beta}(1-g^{n\alpha}))g^{j+l},$$ where
$i+j=n\alpha +\beta$ with $\alpha =0, 1$. The identity
$\sum_{s+v=\beta}{i\choose s}_q{k\choose
v}_qq^{s(k-v)}={{i+k}\choose \beta}$, which can be found in
\cite{Ka}, has been used in the calculations. The deformed Hopf
algebra has the presentation
$$A_{\sigma}=\left\langle\left. x,g\right| x^n=a(1-g^n), g^{np}=1, gx=qxg\right\rangle$$
with the original comultiplication, and since $\gr_cA_{\sigma}=A$
it is a lifting of $A$.

The linear dual $V^*=k\xi$, $\xi (x)=1$, is a crossed module over
the character group $\widehat{G}=\gen{\theta}$, $\theta
(g)=\alpha$ a primitive $np$-th root of unity and $\alpha^p=q$,
with action $\delta^*(\theta\ot\xi )=\theta\xi =\alpha\xi$ and
coaction $\mu^*(\xi )=\phi\ot\xi$, where $\phi =\theta^p$. The
graded braided Hopf algebra $\mathcal A(V^*)\cong k[\xi
]\cong\mathcal C(V)^*$ is the graded polynomial algebra with
comultiplication $\Delta (\xi^i )=\sum_{r+s=i}{i\choose r}_q
\xi^r\ot \xi^s$ so that $\xi^n$ is primitive. The cofree graded
braided Hopf algebra $\mathcal C(V^*)=k\gen{\xi} \cong\mathcal
A(V)^*$ is the divided power Hopf algebra with basis
$\setst{\xi_i}{i\ge 0}$, comultiplication $\Delta
(\xi_i)=\sum_{r+s=i}\xi_r\ot\xi_s$ and multiplication
$\xi_i\xi_j={{i+j}\choose i}_q\xi_{i+j}$. The quantum symmetrizer
$\mathcal S\colon \mathcal A(V^*)\to \mathcal C(V^*)$ is given by
$\mathcal S(\xi^i)=i_q!\xi_i$. The Nichols algebra of $V^*$ is
$B(V^*)=\mathcal A(V^*)/(\xi^n)\cong\im\mathcal S$ and the Hopf
algebra \begin{eqnarray*} A^*&=&B(V^*)\# k\widehat{G} \\
&=& \genst{\xi, \theta}{\xi^n=0, \theta^{np}=\ep , \theta \xi
=\alpha \xi\theta , \Delta (\xi )=\xi\ot\ep +\phi\ot\xi , \Delta
(\theta )=\theta\ot\theta } \end{eqnarray*} is radically graded.

The invertible element $\sigma^*\colon k\to A^*\ot A^*$ with
$\sigma^*(1)=\sigma =\ep\ot\ep
+\sum_{r+s=n}a_{rs}\xi^r\phi^s\ot\xi^s =\ep\ot\ep +\zeta$, with
$a_{rs}={1\over{r_q!s_q!}}$, is the cocycle above represented in
terms of the basis of $A^*$. Observe that $\zeta$ is of degree $n$
and $\zeta^2=0$. The resulting cocycle deformation of the
comultiplication
$$\Delta_{\sigma}=m^{(2)}_{A\ot A}(\sigma\ot\Delta\ot\sigma^{-1}=
\Delta +m_{A\ot A}(\zeta\ot\Delta -\Delta\ot\zeta
)=\delta_0+\delta_n,$$ where $m^{(2)}_{H\ot
H}(\zeta\ot\Delta\ot\zeta )=0$ is used, is compatible with the
original multiplication. Since $\Delta (\theta )\zeta
=\alpha^n\zeta\Delta (\theta )$, it follows that
$$\Delta_{\sigma}(\theta^i)=\theta^i\ot\theta^i +
(1-\alpha^{ni})\zeta (\theta^i\ot\theta^i).$$
Using the identity $a_{u-1,s}q^s+a_{u,s-1}=a_{u,s}$ one finds that
$\zeta\Delta (\xi )=\Delta(\xi )\zeta$, so that
$\Delta_{\sigma}(\xi )=\Delta (\xi )$ and
$$\Delta_{\sigma }(\xi^i\theta^j)=\Delta (\xi^i)\Delta_{\sigma}(\theta^j).$$
The deformed Hopf algebra has the presentation
$$A^{\sigma}=\genst{\xi , \theta}{\Delta_{\sigma}(\xi )=\Delta (\xi ),
\Delta_{\sigma}(\theta )= \theta\ot\theta +(1-\alpha^n)\zeta
(\theta\ot\theta )}$$ with the original multiplication and radical
filtration, so that $\gr_rA^{\sigma }=A^*$.

2. Let $G=\gen{g}$ be the cyclic group of order $np_1p_2$ and let
$\alpha$ be a primitive root of unity of order $np_1p_2$. Consider
the 1-dimensional crossed $G$-module $V=kx$ with action
$gx=\alpha^{p_2}x$ and coaction $\delta (x)=g^{p_1}\ot x$. The
braiding $c\colon V\ot V\to V\ot V$ is then given by $c(x\ot
x)=g^{p_1}x\ot x=\alpha^{p_1p_2}x\ot x$.

The dual space $V^*=k\xi$ is a crossed module over the character
group $\widehat{G} = \gen{\theta}$, where $\theta (g)=\alpha$. The
action is given by
$$\delta^*(\theta\ot\xi )(x)=(\theta\ot\xi )\delta (x)=
\theta(g^{p_1})\xi (x)=\alpha^{p_1}$$
and the coaction by
$$\mu^*(\xi (g^i\ot x)=\xi (g^ix)=\alpha^{ip_2}\xi (x)=\alpha^{ip_2},$$
so that $\delta^*(\theta\ot\xi )=\alpha^{p_1}\xi$ and $\mu^*(\xi
)=\theta^{p_2}\ot\xi $. The braiding map $c^*\colon V^*\ot V^*\to
V^*\ot V^*$ is determined by $c^*(\xi\ot\xi
)=\theta^{p_2}\xi\ot\xi =\alpha^{p_1p_2}\xi\ot\xi$. This means
that the dual $V^*$ is obtained essentially by interchanging the
role of $p_1$ and $p_2$, i.e: $(G, V, p_1, p_2)^*=(\widehat{G},
V^*, p_2, p_1)$.

The free graded braided Hopf algebra $\mathcal A(V)=k[x]$ is the
polynomial algebra with comultiplication $\Delta (x^i)=(x\ot
1+1\ot x)^i=\sum_{r+s=i}{i\choose r}_qx^r\ot x^s$, where
$q=\alpha^{p_1p_2}$. The ideal $(x^n)$ is a Hopf ideal, since
$x^n$ is primitive. The cofree graded braided Hopf algebra
$\mathcal C(V)=k\langle x\rangle$ is the divided power Hopf
algebra with basis $\setst{x_i}{i\ge 0}$, comultiplication $\Delta
(x_i)=\sum_{r+s=i}x_r\ot x_s$ and multiplication $m(x_i\ot
x_j)={{i+j}\choose i}_qx_{i+j}$. It follows in particular that
$x_1^i=i_q!x_i$ and $x_1^n=n_q!x_n=0$. The quantum symmetrizer
$\mathcal S\colon \mathcal A(V)\to\mathcal C(V)$ is determined by
$\mathcal S(x^i)=\mathcal S(x)^i=x_1^i=i_q!x_i$ and $\mathcal
S(x^n)=N_q!x_n=0$. The Nichols algebra of $V$ is then
$B(V)=\mathcal A(V)/(x^n)\cong\im\mathcal S\subset\mathcal C(V)$,
and $\im\mathcal S=\oplus_{i=0}^{n-1}kx_i$ is the Hopf subalgebra
of $\mathcal C(V)$ generated by $x_1$. The bosonization
$$A=B(V)\# kG =\genst{g, x}{g^{np_1p_2}=1, x^n=0, gx=\alpha^{p_2}xg,
\Delta (x)=x\ot 1+g^{p_1}\ot x}$$ is coradically as well as
radically graded, giving rise to liftings by deforming the
multiplication and co-liftings by deforming the comultiplication.

The linear functional $\zeta \colon A\ot A\to k$ of degree $-n$,
defined by $\zeta (x^ig^j\ot
x^kg^l)=\alpha^{p_2jk}\delta^{i+k}_n$, is a Hochschild cocycle
with $\zeta^2=0$, and satisfying
$$\zeta (m\ot 1)*(\zeta\ot\ep )=\zeta (1\ot m)*(\ep\ot\zeta ).$$
It follows that
$$\sigma =e^{\zeta}=\ep\ot\ep +\zeta\colon A\ot A\to k$$
is a convolution invertible multiplicative cocycle. In
terms of the dual basis of $A^*$ it can be expressed as
$$\sigma =\ep\ot\ep +\zeta =\ep\ot\ep +
\sum_{r+s=n}^{0<r,s<n}a_{rs}\xi^r\theta^{p_2s}\ot\xi^s ,$$
where $a_{rs}={1\over{r_q!s_q!}}$. The corresponding cocycle
deformation of the multiplication of $A$ is
$$m_{\sigma}=(\sigma\ot m\ot\sigma^{-1})\Delta^{(2)}_{A\ot A}=
m+(\zeta\ot m-m\ot\zeta )\Delta_{A\ot A},$$
since $(\zeta\ot m\ot\zeta )\delta_{A\ot A}=0$ (it is of degree $-2n$). Using
$$\Delta_{A\ot A}(x^ig^j\ot x^kg^l)=
\sum_{r+s=i}^{u+v=k}{i\choose r}_q{k\choose u}_qx^rg^{p_1s+j} \ot
x^ug^{p_1v+l}\ot x^sg^j\ot x^vg^l$$ and invoking the identity
\cite{Ka}
$$\sum_{s+v=\beta}{i\choose s}_q{k\choose v}_qq^{s(k-v)}=
{{i+k}\choose\beta }={{n+\beta }\choose\beta }=1$$ when
$i+k=n+\beta $, the following explicit formula for $m_{\sigma}$
can be deduced:
\begin{eqnarray*} &&\hskip -10pt
m_{\sigma}(x^ig^j\ot
x^kg^l)=\alpha^{p_2jk}x^{i+k}g^{j+l}\\
&&+\sum_{r+s=i}^{u+v=k}a{i\choose r}_q{k\choose
u}_q\alpha^{p_2((p_1s+j)u+jv)} (\delta^{r+u}_nx^{s+v} -
x^{r+u}\delta^{s+v}_ng^{p_1(s+v)})g^{j+l}\\
&&=\alpha^{p_2jk}[x^{i+k}+ax^{\beta} (\sum_{s+v=\beta}{i\choose
s}_q{k\choose v}_qq^{s(k-v)}\\
&&\hskip 10pt - \sum_{r+u=\beta}{i\choose
r}_q{k\choose u}_qq^{(i-r)u}g^{p_1n}]g^{j+l}\\
&&=\alpha^{p_2jk}[x^{i+k}+ax^{\beta}(1-g^{p_1n})]g^{j+l},
\end{eqnarray*}
where $i+k=n\gamma +\beta$ with $\gamma =0, 1$.

The element $\zeta\colon k\to A\ot A$, $\zeta (1)
=\sum_{r+s=n}^{0<r,s<n}a_{rs}x^rg^{p_1s}\ot x^s$ is a Hochschild
cocycle with $\zeta^2=0$. and satisfying
$$(\Delta\ot 1)(\zeta )(\zeta\ot 1)=(1\ot\Delta )(\zeta )(1\ot\zeta ).$$
It then follows that $\sigma\colon k\to A\ot A$, defined by
$$\sigma (1)=e^{\zeta}=1\ot 1 +\zeta =1\ot 1 +
\sum_{r+s=n}^{0<r,s<n}a_{rs}x^rg^{p_1s}\ot x^s,$$
is invertible and satisfies the multiplicative 2-cocycle condition
$$(\Delta\ot 1)(\sigma )(\sigma\ot 1)=(1\ot\Delta )(\sigma )(1\ot\sigma ).$$
The corresponding cocycle deformation of the comultiplication of $A$ is
\begin{eqnarray*}
\Delta_{\sigma}&=& m_{A\ot A}^{(2)}(\sigma\ot\Delta\ot\sigma^{-1}) \\
&=& \Delta +m_{H\ot H}(\zeta\ot\Delta -\Delta\ot\zeta ),
\end{eqnarray*}
since $m_{A\ot A}^{(2)}(\zeta\ot\Delta\ot\zeta )=0$
(it is of degree $2n$). In the resulting Hopf algebra $(A, m,
\Delta_{\sigma}, \iota, \ep )$ we have
$$\Delta_{\sigma}(g)= g\ot g +(1-\alpha^{p_2n})\zeta (g\ot g)$$
and $\Delta_{\sigma}(g^{p_1})=g^{p_1}\ot g^{p_1}=\Delta
(g^{p_1})$. Moreover, a simple calculation using the identity
$a_{u-1, s}q^s+a_{u. s-1} =a_{u, s}$ for $u+s=n+1$ shows that
$\zeta\Delta (x)=\Delta (x)\zeta $, so that
$$\Delta_{\sigma}(x)=\Delta (x)+m_{A\ot A}(\zeta\ot\Delta -
\Delta\ot\zeta )(x)=\Delta (x).$$

3. If $g=\gen{g}$ is the cyclic group of odd prime order $p$ and
$q$ is a primitive $p$-th root of unity, consider the
2-dimensional crossed $G$ module $V=kx_1\oplus kx_2$ with action
$gx_i=q^{(-1)^{i-1}}x_i$ and coaction $\delta (x_i)=g\ot x_i$. The
braiding map $c\colon V\ot V\to V\ot V$ is then $c(x_i\ot
x_j)=q^{(-1)^{j-1}}x_j\ot x_i$.

The dual space $V^*=k\xi_1\oplus k\xi_2$ is a crossed module over
the character group $\widehat{G}=\gen{\theta}$, where $\theta
(g)=q$, with action $\theta\xi_i=q\xi_i$, coaction
$\mu^*(\xi_i)=\theta^{(-1)^{i-1}}\ot\xi_i$ and braiding $c(x_i\ot
x_j)=q^{(-1)^{j-1}}\xi_j\ot\xi_i$.

The free graded braided Hopf algebra $\mathcal A(V)=T(V)$ is the
tensor algebra with comultiplication determined by $\Delta
(x_i)=x_i\ot 1+1\ot x_i$, where the braiding in the form of
$\Delta m=(m\ot m)(1\ot c\ot 1)(\Delta\ot\Delta )$ has to be taken
into account. In particular, \begin{eqnarray*}
\Delta(x^p_i)&=&(x_i\ot 1+1\ot x_i)^p=\sum_{r+s=p}{p\choose
r}_qx_i^r \ot x_i^s\\
&=&x_i^p\ot 1+1\ot x_i^p, \end{eqnarray*} since ${p\choose r}_q=0$
for $0<r<p$, and
\begin{eqnarray*}
\Delta ([x_1,x_2]_c)&=&[x_1,x_2]_c\ot 1+x_1\ot x_2 -c^2(x_1\ot
x_2) +1\ot [x_1,x_2]\\
&=& [x_1,x_2]_c\ot 1 +1\ot [x_1,x_2], \end{eqnarray*} since
$c^2(x_1\ot x_2)=x_1\ot x_2$, so that $[x_1,x_2]_c$ is primitive.
The cofree graded braided Hopf algebra $\mathcal C(V)=k\langle
x_1, x_2\rangle$ is the divided power Hopf algebra with basis all
words in the variables $\setst{x_i^{(r)}}{i=0,1; r\ge 0}$,
comultiplication $\Delta (x_i^{(l)}=\sum_{r+s=l}x_i^{(r)}\ot
x_i^{(s)}$ and multiplication $m(x_i^{(r)}\ot
x_i^{(s)})={{r+s}\choose r}_qx_i^{(r+s)}$ and $m(x_i^{1)}\ot
x_j^{(1)})=x_i^{(1)}x_j^{(1)}+q^{(-1)^{j-1}}x_j^{(1)}x_i^{(1)}$ if
$i\ne j$. It follows in particular that
$(x_i^{(1)})^r=r_q!x_i^{(r)}$, hence $(x_i^{(1)})^p=0$ and that
$[x_i^{(1)},x_j^{(1)}]_c=0$ if $i\ne j$. The quantum symmetrizer
$\mathcal S\colon \mathcal A(V)\to\mathcal C(V)$ is determined by
$\mathcal
S(x_ix_j)=x_i^{(1)}x_j^{(1)}+q^{(-1)^{j-1}}x_j^{(1})x_i^{(1)}$
$\mathcal S(x_i^r)=\mathcal S(x_i)^r=(x_i^{(1)})^r=r_q!x_i^{(1)}$,
so that $\mathcal S(x_i^p)=p_q!x_i^{(p)}=0$, and $\mathcal
S([x_i,x_j]_c)=0$ for $i\ne j$. The Nichols algebra of $V$ is then
$B(V)=\mathcal A(V)/(x_1^p, x_2^p. [x_1,x_2]_c)\cong\im\mathcal
S\subset\mathcal C(V)$, and $\im\mathcal S$ is the Hopf subalgebra
of $\mathcal C(V)$ generated  by $\set{x_1^{(1)}, x_2^{(1)} }$.
The bosonization
$$A=B(V)\# kG=\genst{g, X_1, x_2}{g^p=1, x_1^p=0, x_2^p=0, x_1x_2=q^{-1}x_2x_1}$$
with comultiplication $\Delta (g)=g\ot g$ and $\Delta (x_i)=x_i\ot
1+g\ot x_i$ is coradically graded.

The linear functional $\zeta\colon A\ot A\to k$ of degree $-2$,
defined by $\zeta (x_1^ix_2^jg^k\ot
x_1^rx_2^sg^t)=aq^k\delta^i_0\delta^j_1\delta^r_1\delta^s_0$, is a
Hochschild cocycle with $\zeta^p=0$

\section{Duals of pointed Hopf algebras}

\subsection{Liftings of Quantum linear spaces} The duals of finite
dimensional pointed Hopf algebras need not necessarily be pointed.
Although the dual of the bicross product $E=\cal B(V)\# kG$, which
is the bicross product $E^*=\cal B(W)\# k\widehat{G}$, is again
pointed, the duals of its liftings are generally not pointed. We
will explore the duals of such liftings $H$ when $V$ is a quantum
linear space over a finite abelian group $G$. Then
$V=\oplus_{i=1}^t kx_i$ with $x_i\in V_{g_i,\chi_i}$ with
$\chi_i(g_j)\chi_j(g_i)=1$. If $\chi_i(g_i)$ is a primitive
$n_i$-th root of unity then $\dim\cal B(V)=n_1n_2...n_t$ and by
\cite{Gr1} the finite dimensional liftings of $E$ are of the form
$$H(a)=\genst{G, V}{gx_i=\chi_i(g)x_ig,
[x_i,x_j]=a_{ij}(g_ig_j-1), x_i^{n_i}=a_{ii}(g^{n_i}-1)},$$ where
$a_{ij}=0$ when $g_ig_j=1$ or $\chi_i\chi_j\ne\epsilon$ for $i\ne
j$ and $a_{ii}=0$ when $g_i^{n_i}=1$ or $\chi_i^{n_i}\ne\epsilon$.
Let $G'$ be the subgroup of $G$ generated by $\setst{g_ig_j,
g_k^{n_k}}{a_{ij}\ne 0, a_{kk}\ne 0}$ and let $\bar G=G/G'$.
Observe that the sequence of character groups
$$1\to\widehat{G/G'}\to\widehat{G}\to\widehat{G'}\to 1$$
is exact, since $k^*$ is divisible ($k$ being algebraically closed).
Then
$$A=H(a)/\gen{[x_i,x_j], x_k^{n_k}}\cong\cal B(V)\# k\bar G$$ fits into
a commutative diagram
$$\begin{CD}
kG' @> >> kG @> >> kG/G' \\
@|             @V\kappa VV      @V\kappa VV \\
kG' @> >> H @> >> A
\end{CD}$$
The map $\pi\colon H\to kG$, $\pi (x^mg)=\delta_{0m}g$, is a
lifting of the canonical projection $\pi\colon A\to k\bar G$ and
obviously satisfies $\pi\kappa =1$. It is a coalgebra map, but not
an algebra map if $a\ne 0$. Dualizing we get $A^*=\cal B(V^*)\#
k\widehat{G/G'}$, a commutative diagram
$$\begin{CD}
A^* @> >> H^* @> >> k\widehat{G'} \\
@V\kappa^*VV @V\kappa^*VV @| \\
k\widehat{G/G'} @> >> k\widehat{G} @> >> k\widehat{G'}
\end{CD}$$
and an algebra section $\pi^*\colon k\widehat{G}\to H^*$ for
$\kappa^*$, $\pi^*(\chi )(x^mg)=\delta_{0m}\chi (g)$.

\begin{Proposition} Let $V=\oplus_{i=1}^tkx_i$ be a quantum linear space over
the finite abelian group $G$ with $\chi_i(g_i)$ of order $n_i$ in $k^*$, and
let $H$ be a non-trivial lifting of $E=\cal B(V)\# kG$. Then
\begin{enumerate}
\item $E^*\cong \cal B(V^*)\# k\widehat{G}$ and
\item $G(H^*)=\widehat{G/G'}$ is a proper subgroup of $\widehat{G}$ and $H^*$ is not
pointed.
\end{enumerate}
\end{Proposition}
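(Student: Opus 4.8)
The plan is to handle the two assertions separately. For (1) I will use the standard duality of bosonizations: for a finite dimensional braided Hopf algebra $R$ in ${}^{kG}_{kG}\YD$ one has $(R\# kG)^{*}\cong R^{*}\# (kG)^{*}$, where the linear dual $R^{*}$ is a braided Hopf algebra in ${}^{k\widehat G}_{k\widehat G}\YD$. Applying this with $R=\cal B(V)$, using that $(kG)^{*}\cong k\widehat G$ as Hopf algebras since $G$ is finite abelian, and that the dual of a Nichols algebra is the Nichols algebra of the dual crossed module, $\cal B(V)^{*}\cong\cal B(V^{*})$ (which follows from the self-duality of the quantum symmetrizer $\cal S\colon \cal A(V)\to\cal C(V)$ of Section \ref{s2} under graded duality, $\cal A(V)^{*}\cong\cal C(V^{*})$ and $\cal C(V)^{*}\cong\cal A(V^{*})$), yields $E^{*}\cong\cal B(V^{*})\# k\widehat G$. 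For the quantum linear space $V=\oplus_{i=1}^{t}kx_{i}$ this last isomorphism is also visible coordinatewise, using $\cal B(V)\cong B_{1}\ot\cdots\ot B_{t}$ and the single-variable computation of Section \ref{ex1}.

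For (2), the first step is that $G(H^{*})$ is the group $\Alg(H,k)$ of algebra maps $H\to k$ under convolution, so I compute this group from the presentation of $H(a)$. Evaluating $g_{i}x_{i}=\chi_{i}(g_{i})x_{i}g_{i}$ and using that $f(g_{i})$ is a unit while $\chi_{i}(g_{i})\ne 1$ (we may assume each $n_{i}\ge 2$) forces $f(x_{i})=0$ for every algebra map $f$; then the relations $[x_{i},x_{j}]=a_{ij}(g_{i}g_{j}-1)$ and $x_{i}^{n_{i}}=a_{ii}(g_{i}^{n_{i}}-1)$ force $f|_{G}$ to be a character killing $g_{i}g_{j}$ whenever $a_{ij}\ne 0$ and $g_{k}^{n_{k}}$ whenever $a_{kk}\ne 0$, that is, killing $G'$. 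Conversely any character of $G$ trivial on $G'$ extends to an algebra map by $x_{i}\mapsto 0$. Hence $f\mapsto f|_{kG}$ identifies $G(H^{*})$ with $\widehat{G/G'}$, which via the exact sequence $1\to\widehat{G/G'}\to\widehat G\to\widehat{G'}\to 1$ is the subgroup of $\widehat G$ of characters trivial on $G'$. Non-triviality of the lifting means some $a_{ij}$ or $a_{kk}$ is nonzero, which by the constraints on $a$ forces the corresponding $g_{i}g_{j}$ or $g_{k}^{n_{k}}$ to be $\ne 1$; hence $G'\ne\{1\}$ and $|\widehat{G/G'}|=|G/G'|<|G|=|\widehat G|$, so $G(H^{*})$ is a proper subgroup of $\widehat G$.

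To see that $H^{*}$ is not pointed I will show $\Cor(H^{*})\supsetneq kG(H^{*})$, using the standard identification $\Cor(H^{*})=\Rad(H)^{\perp}$ for the finite dimensional Hopf algebra $H$ (see \cite{Mo}). Let $I$ be the two-sided ideal of $H$ generated by $x_{1},\dots,x_{t}$ (two-sided because each $g\in G$ normalizes $kx_{i}$). Killing the $x_{i}$ in the presentation turns the remaining relations into $a_{ij}(g_{i}g_{j}-1)=0$ and $a_{ii}(g_{i}^{n_{i}}-1)=0$, so $H/I\cong kG/J\cong k[G/G']=k\bar G$, where $J$ is generated by the $g_{i}g_{j}-1$ with $a_{ij}\ne 0$ and the $g_{k}^{n_{k}}-1$ with $a_{kk}\ne 0$; this is semisimple, so $\Rad(H)\subseteq I$. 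The crucial point is that $I$ is \emph{not} nilpotent: for an index with $a_{ij}\ne 0$ (resp.\ $a_{kk}\ne 0$) the element $g_{i}g_{j}-1=a_{ij}^{-1}[x_{i},x_{j}]$ (resp.\ $g_{k}^{n_{k}}-1=a_{kk}^{-1}x_{k}^{n_{k}}$) lies in $I$ and is a nonzero element of the semisimple subalgebra $kG=\Cor H\subseteq H$, hence is not nilpotent there, nor in $H$. Since $\Rad(H)$ is nilpotent, $\Rad(H)\subsetneq I$, whence $\dim\Cor(H^{*})=\dim(H/\Rad(H))>\dim(H/I)=|G/G'|=\dim kG(H^{*})$, so $H^{*}$ is not pointed. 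The main obstacle is exactly this last argument: recognizing that the failure of pointedness is encoded in the strict inclusion $\Rad(H)\subsetneq I$, and that non-triviality of the lifting is precisely what makes a group element such as $g_{i}g_{j}$ surface as a genuine non-nilpotent element inside the ideal generated by the skew primitives; part (1) is a routine application of standard Hopf-algebraic duality.
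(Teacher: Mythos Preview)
Your argument is correct, and for part (1) and the computation of $G(H^*)=\widehat{G/G'}$ it coincides with the paper's proof essentially word for word.

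The only genuine difference is in the final step showing that $H^*$ is not pointed. You argue via the ideal $I=(x_1,\dots,x_t)$: since $H/I\cong k\bar G$ is semisimple you get $\Rad H\subseteq I$, and since $I$ contains a non-nilpotent element such as $g_ig_j-1\in kG$ you get $\Rad H\subsetneq I$, whence $\dim\Cor(H^*)>|G/G'|=|G(H^*)|$. The paper instead observes directly that $kG\cap\Rad H=0$ (because $kG$ is semisimple and $\Rad H$ is nilpotent, which is Lemma~\ref{l14}), so $kG$ embeds in $H/\Rad H$ and $\dim\Cor(H^*)\ge |G|>|\widehat{G/G'}|=|G(H^*)|$. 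Both routes hinge on the same fact---nonzero elements of $kG$ cannot lie in $\Rad H$---but the paper applies it more directly and obtains the sharper lower bound $|G|$ rather than your $|G/G'|+1$. The paper also records a second proof, showing that $H/\Rad H$ is noncommutative (again using $kG\cap\Rad H=0$ to derive a contradiction from commutativity), which is closer in spirit to your approach of locating specific non-nilpotent witnesses inside the ideal generated by the skew primitives.
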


\begin{proof} The dual $V^*=\oplus_{i=1}^tk\xi_i$,
where $\xi_i(x_j)=\delta_{ij}$, is a crossed $k\widehat{G}$-module
with action and coaction given by $\chi\xi_i=\chi (g_i)\xi_i$ and
$\delta (\xi_i)=\chi_i\ot\xi_i$. The commutativity of the diagram
$$\begin{CD}
\cal C(V)^* @>\cal S^*>> \cal A(V)^* \\
@V\cong VV  @V\cong VV \\
\cal A(V^*) @>\cal S >> \cal C(V^*)
\end{CD}$$
implies that $\cal B(V)^*\cong \cal B(V^*)$ as graded braided Hopf
algebras over $k\widehat{G}$.

If $\chi\in G(H^*)$, i.e: $\chi\colon H\to k$ is an algebra map,
then $\chi (g)\chi (x_i)=\chi (gx_i)=\chi_i(g)\chi
(x_ig)=\chi_i(g)\chi (x_i)\chi (g)$, hence $\chi (x_i)=0$ for all
$i$. This implies that $0=\chi (x_i^{n_i})=a_{ii}\chi
(g_i^{n_i}-1)$ for all $i$ and $0=\chi ([x_j,x_k])=a_{jk}\chi
(g_jg_k-1)$ for all $j<k$, so that $\chi (G')=1$. Thus
$G(H^*)\subseteq \widehat{G/G'}$, and since $\widehat{G/G'}=
G(A^*)\subseteq G(H^*)$, we conclude that $G(H^*)=\widehat{G/G'}$.
Now
$$\dim\Cor (H^*) =\dim (H/\Rad H)=\dim H-\dim\Rad H\ge
|G|=|\widehat{G}| > |G(H^*)|$$ implies that $\Cor H^*$ must
contain a non-trivial matrix coalgebra component, i.e: that $H^*$
is not pointed.

For a different proof observe that it suffices to show that $H/\Rad H$ is not
commutative, since then $\Cor (H^*)\cong (H/\Rad H)^*$ and hence $H^*$ is not
pointed. First observe that $kG\cap\Rad H=0$, since $\Rad H$ is nilpotent and
$\Rad kG=0$, so that
$$\begin{CD} kG @>\kappa >> H @>\eta >> H/\Rad H \end{CD}$$
is injective. If $H/\Rad H$ were commutative then
$$0=\eta (x_i)\eta (g)-\eta (g)\eta (x_i)=\eta (x_ig-gx_i)=(1-\chi_i(g))\eta (x_i)\eta (g)$$
for $1\le i\le t$ and every $g\in G$, and hence $x_i\in\Rad H$,
since $\chi_i\neq 1$. This would imply that
$x_i^{n_i}=a_{ii}(g_i^{n_i}-1)$ and $[x_j,x_k]=a_{ij}(g_jg_k-1)$
are in $\Rad H$ for all $i$ and all $j<k$, respectively,
contradicting $\Rad H\cap kG=0$.
\end{proof}

\subsection{Examples} Here are some examples of Hopf algebras with the property that $G(H^*)=\Alg (H,k)$ is trivial.
Let $G$ be a cyclic group of odd order $n$, $r>1$ a divisor of
$n$, $q$ a primitive $r$-th root of unity and $H=H(a)$ any lifting
of a quantum linear space over $G$ defined by the generators $g$,
$x$, $y$, the relations
$$g^n=1, gx=qxg, gy=q^{-1}yg, [x,y]=c(g^2-1), x^r=a(g^r-1), y^r=b(g^r-1)$$
and  comultiplication $\Delta (g)=g\ot g$, $\Delta x=x\ot 1+g\ot
x$, $\Delta (y)=y\ot 1+g\ot y$. If $c\ne 0$ then $G'=\gen{g^r,
g^2}=G$ and hence $G(H^*)\cong \widehat{G/G'}=\set{\varepsilon}$.

1. The examples in \cite{BDG} are of that form. If $p$ is an odd
prime number and $q$ is a primitive $p$-th root of unity, then the
Hopf algebra defined by generators $g$, $x$, $y$, relations
$$g^{p^2}=1, gx=qxg, gy=q^{-1}yq, x^p=a(g^p-1)=y^p, [x,y]=b(g^2-1)$$
and comultiplication $\Delta (g)=g\ot g$, $\Delta x=x\ot 1+g\ot
x$, $\Delta (y)=y\ot 1+g\ot y$. Then $\dim H=p^4$ and $G(H^*)$ is
trivial if $b\ne 0$.

2. If $p$ is an odd prime number and $q$ a primitive $p$-th root
of unity then the algebra defined by generators $g$, $x$, $y$ and
relations
$$g^p=1, gx=qxg, gy=q^{-1}yq, x^p=0=y^p, [x,y]=b(g^2-1)$$ is a Hopf
algebra with comultiplication $\Delta (g)=g\ot g$, $\Delta x=x\ot
1+g\ot x$, $\Delta (y)=y\ot 1+g\ot y$. Moreover, $\dim H=p^3$ and
$G(H^*)$ is trivial if $b\ne 0$.

This Hopf algebra also has an interesting property of having
exactly $p$ irreducible representations, one for each dimension
between $1$ and $p$. Indeed, assume that $G, X, Y$ are $r\times r$
matrices ($r\ge 2$, it is clear the $\varepsilon$ is the unique
$1$-dimensional representation), such that $g\mapsto G$, $x\mapsto
X$ and $y\mapsto Y$ is an irreducible representation. Note that,
if $e$ is an eigenvector for $G$ corresponding to an eigenvalue
$\psi$, then either $X^iY^j e=0$ or $X^iY^j e$ is an eigenvector
for $G$ corresponding to an eigenvalue $\psi\xi^{i-j}$. Now choose
an eigenvector $e$ for $G$ so that $Ye=0$ and note that vectors
$\mathcal{E}=(e,Xe,\ldots X^{r-1}e)$ must be a basis for $k^n$
(since $\langle G,X,Y\rangle=\bigvee\set{G^iX^jY^k}=M_r(k)$ and
$\bigvee X^i e=\bigvee G^iX^jY^ke$). In particular, this shows
that $r\le p$. In the ordered basis $\mathcal{E}$, the matrices
$G, X$ and $Y$ are as follows.

$$G=\psi
\begin{pmatrix}
  1 &     &    &   &   &   \\
    & \xi^{1} &    &   &   &   \\
    &     &  . &   &   &   \\
    &     &    & . &   &   \\
    &     &    &   & . &   \\
    &     &    &   &   & \xi^{r-l} \\
\end{pmatrix}\quad ,\quad
X=
\begin{pmatrix}

  0 &   &   &   &    &   \\

  1 & 0 &   &   &    &   \\

    & .  & . &   &    &   \\

    &   &  . & . &    &   \\

    &   &   &  . &  . &   \\

    &   &   &   &   1 & 0 \\

\end{pmatrix},$$
\vskip .5cm
$$Y=
\begin{pmatrix}

  0   & y_1      &    &   &         &   \\

      & 0     &  .  &   &         &   \\

      &       & .  & .  &         &   \\

      &       &    & . &  .       &   \\

      &       &    &   & .       & y_{r-1}  \\

      &       &    &   &         & 0 \\

\end{pmatrix}.$$

Now, using the identity $XY-\xi^{-1}YX=G^2-I$, it is easy to see
that $\psi^2=\xi^{1-r}$ and that
$$y_i=(\xi-\psi^2\xi^i)(\xi^i-1)(\xi-1)^{-1}=\xi (1-\xi^{i-r})(\xi^i-1)(\xi-1)^{-1}.$$
It is also straightforward to check, that $$(g,x,y)\mapsto
(G,X,Y),$$ where $G,X,Y$ are as above, is an irreducible
representation of $H$.

3. Here is an example of even dimension. Let $p$ be a prime number
and let $q$ be a primitive $p$-th root of unity. Define a Hopf
algebra by generators $g$, $x$, $y$, relations
$$g^{2p}=1, gx=qxg, gy=q^{-1}yq, x^p=a(g^p-1)=y^p, [x,y]=b(g^2-1)$$ and
comultiplication $\Delta (g)=g\ot g$,
$\Delta x=x\ot 1+g\ot x$, $\Delta (y)=y\ot 1+g\ot y$.
Then $\dim H=2p^3$ and $G(H^*)$ is trivial if $a\ne 0\ne b$.

4. Let $n=rs$ be a positive integer with $\gcd (r,s)=1$. If
$G=\gen{g}$ is a cyclic group of order $n$, then the character
group $\widehat{G}=\gen{\chi}$ is also cyclic of order $n$. Let
$H$ be the Hopf algebra defined as an algebra by the generators
$g$, $x$, $y$ and the relations
\begin{eqnarray*}
g^n &=&1, gx=\chi^r(g)x g, gy=\chi^{-r}(g)y g,\\
x^r&=& a(g^s-1), y^r=c(g^s-1), [x,y]=b(g^2-1), \end{eqnarray*}
with comultiplication $\Delta (g)=g\ot g$, $\Delta x=x\ot 1+g\ot
x$, $\Delta (y)=y\ot 1+g\ot y$. Then $\chi^r(g)$ and $\chi^{-r}(g)$
have order $s$. Moreover, $\dim H=ns^2$. Since $\gcd (r,s)=1$, it
follows that $G_0=\gen{g^s, g^2}$ if $a\ne 0\ne c$, even if $b=0$.
Thus $G(H^*)$ is trivial, if $s$ is odd, and of order $2$ if $s$
is even.

\begin{appendix}
\end{appendix}


\begin{thebibliography}{I}

\bibitem[AS]{AS} N.~Andruskiewitsch and H.J.~Schneider, {\sl On the classification of finite-dimensional pointed Hopf algebras},
preprint, \texttt{arXiv:math.QA/0502157 v3}, 2006, to appear in
Annals of Math.
\bibitem[AS1]{AS1} N.~Andruskiewitsch and H.J.~Schneider, {\sl Finite Quantum groups and Cartan matrices},
Adv. in Math. {\bf 154} (2000), 1--45.
\bibitem[AS2]{AS2} N.~Andruskiewitsch and H.J.~Schneider, {\sl Finite Quantum groups over abelian groups of prime exponent},
Ann. Sci. Ec. Norm. Sup. {\bf 35} (2002), 1--26.
\bibitem[BDG]{BDG} M.~Beattie, S.~D\u asc\u alescu, L.~Grunenfelder, {\sl On the number of types of finite-dimensional Hopf algebras}, Invent.
Math. {\bf 136} (1999), no. 1, 1--7.
\bibitem[Be]{Be} M. Beattie, {\sl Duals of pointed Hopf algebras}, J. Algebra {\bf 262} (2003), 54-76.
\bibitem[BDR]{BDR} M. Beattie, S. Dascalescu, S. Raianu, {\sl Liftings of Nichols algebras of type $B_2$}, Israel J. Math. {\bf 132} (2002), 1-28
\bibitem[CE]{CE} H.~Cartan and S.~Eilenberg, Homological Algebra, Princeton University Press, 1956.
\bibitem[CP]{CP} V.~Chari and A.~Pressley, (A Guide to) Quantum groups, Cambridge University Press, 1994.
\bibitem[Cu]{Cu} J. Cuadra, {\sl On Hopf algebras with non-zero integrals}, Comm. Algebra {\bf 34} (2006), 2143-2156.
\bibitem[dCP]{dCP} C.~DeConcini and C.~Procesi, Quantum groups, in ``D-modules, Representation Theory and Quantum groups",
31--140, Lecture Notes in Mathematics{\bf 1565}, Springer 1993.
\bibitem[Di]{Di} D. Didt, {\sl Pointed Hopf algebras and quasi-isomorphisms}, Algebra and Rep. Theory {\bf 8} (2005), 347-362.
\bibitem[DCY]{DCY} Y.~Du, X.-W.~Chen, and Y.~Ye, \textsl{On graded bialgebra deformations},
preprint,\\
\texttt{math.QA/0506030}, 2006.
\bibitem[Gr1]{Gr1} L.~Grunenfelder, {\sl About Braided and Ordinary Hopf Algebras}, Milan j. math. {\bf 71} (2003), 1--19.
\bibitem[Gr2]{Gr2} L.~Grunenfelder, {\sl Tangent cohomology, Hopf algebra actions and deformations}, J. Pure Appl. Algebra {\bf 67} (1990), 125--149.
\bibitem[GS]{GS} M.~Gerstenhaber and  S.D.~Shack, {\sl Bialgebra cohomology, deformations, quantum groups and algebraic deformations}, Proc.\ Natl.\ Acad.\ Sci.\ USA, vol.\ 87 (1990), 478--481.
\bibitem[GM]{GM} L.~Grunenfelder and M.~Mastnak, {\sl Cohomology of abelian matched pairs and the Kac sequence},
J. Algebra {\bf 276} (2004), 706--736.
\bibitem[Ka]{Ka} C.~Kassel, Quantum groups, Graduate Texts in Mathematics, 155, Springer-Verlag, New York, 1995.
\bibitem[LR]{LR} R.G.~Larson, D.E.~Radford, {\sl Finite-dimensional cosemisimple Hopf algebras in characteristic $0$ are
semisimple}, J. Algebra {\bf 117} (1988), no. 2, 267--289.
\bibitem[Ma]{Ma} A.~Masuoka, {\sl Defending the negated Kaplansky conjecture}, Proc. Amer. Math. Soc, {\bf 129} (2001), 3185--3192.
\bibitem[Ma2]{Ma2} A. Masuoka, {\sl Abelian and non-abelian second cohomo;ogy of quantized enveloping algebras}, \texttt{arXiv:math.QA/0708.1982 v1}, 2007.
\bibitem[Mo]{Mo} S.~Montgomery, Hopf algebras and their actions on rings,
CBMS Conf. Series in Math., {\bf 82}, AMS, Providence, RI, 1993.
\bibitem[MW]{MW} M.~Mastnak and S.~Witherspoon, \textsl{Bialgebra cohomology, pointed Hopf algebras and deformations},
preprint, \texttt{arXiv:0704.2771}, 2007.
\bibitem[Mu]{Mu} E.~Mueller, {\sl The Coradical filtration of $U_q(\mathfrak{g})$ at roots of unity}, Com. in Algebra {\bf 28} (2000),
1020--1044.
\bibitem[Ra1]{Ra1} D.~Radford, {\sl Hopf algebras with a projection}, J. Algebra {\bf 1985}, 322--347.
\bibitem[Ra2]{Ra2} D.~Radford, {\sl On the coradical of a finite dimensional Hopf algebra}, Proc. Amer. Math. Soc. {\bf 53} (1975), 9--15.
\bibitem[Ri]{Ri} C.~Ringel, {\sl Hall algebras and quantum groups}, Invent. Math. {\bf 101} (1990), 583--591.
\bibitem[Sch]{Sch} P.~Schauenburg, {\sl Hopf-Galois extensions}, Comm. Algebra {\bf 24} (1996), 3797--3825.
\bibitem[St]{St} B. Stenstr\" om, Rings of Quotients, Die Grundlehren der mathematischen Wissenschaften {bf 217}, Springer, 1975.
\bibitem[Sw]{Sw} M.E.~Sweedler, Hopf algebras, Mathematics Lecture Notes Series, Benjamin, New York, 1969.
\bibitem[Ta]{Ta} M.~Takeuchi, {\sl A correspondence between Hopf ideals and subHopfalgebras}, Manuscripta Math. {\bf 7} (1972), 251--270.
\bibitem[Wa]{Wa} W.C.~ Waterhouse, Introduction to Affine Group Schemes, Springer Verlag, Berlin, 1979.
\bibitem[We]{We} C.~Weibel, An Introduction to Homological Algebra, Cambridge University Press (1994).

\end{thebibliography}
\end{document}